\documentclass[a4paper,reqno]{amsart}

\usepackage[english]{babel}

\usepackage{amsmath,amsthm,amssymb,amsfonts}

\usepackage[normalem]{ulem}

\usepackage{mathrsfs}

\usepackage{enumitem}
\setlist[enumerate]{leftmargin=*}

\usepackage{hyperref}

\theoremstyle{plain}
\newtheorem{theorem}{Theorem}[section]
\newtheorem*{theorem*}{Theorem}
\newtheorem{lemma}[theorem]{Lemma}
\newtheorem*{lemma*}{Lemma}
\newtheorem{corollary}[theorem]{Corollary}
\newtheorem{proposition}[theorem]{Proposition}

\theoremstyle{remark}
\newtheorem{remark}[theorem]{Remark}
\newtheorem*{remark*}{Remark}

\theoremstyle{definition}
\newtheorem{definition}[theorem]{Definition}
\newtheorem*{definition*}{Definition}

\numberwithin{equation}{section}

\newcount\quantno
\everydisplay\expandafter{\the\everydisplay\quantno=0}
\everycr\expandafter{\the\everycr\quantno=0}
\newcommand\quant{\advance\quantno by1
                      \ifnum\quantno=1\qquad\else\quad\fi\forall }

\renewcommand\mod[1]{\vert{#1}\vert}
\newcommand\bigmod[1]{\bigl\vert{#1}\bigr|}

\newcommand\norm[2]{{\Vert{#1}\Vert_{#2}}}

\newcommand\al{\alpha}

\newcommand\ga{\gamma}    
\newcommand\de{\delta}

\newcommand\la{\lambda}   
\newcommand\om{\omega}

\newcommand\hu{H^1}
\newcommand\frh{\mathfrak{h}} 
\newcommand\ghu{{\frh}^1}
\newcommand\ghuH{\frh_{\sgrH}^1}
\newcommand\ghuP{\frh_{\sgrP}^1}

\newcommand\wrt{\,\mathrm{d}}
\newcommand\dtt[1]{\,\frac{\mathrm{d} #1}{ #1}}

\DeclareMathOperator{\Card}{Card}

\newcommand\bBall{\mathbf{B}}

\newcommand\Balls{\mathscr{B}}  

\newcommand{\Discr}{\mathbf{P}}

\newcommand\sgrH{\mathscr{H}}  
\newcommand\sgrP{\mathscr{P}} 

\newcommand\opId{\mathscr{I}} 

\newcommand{\rzP}{\mathbf{I}}

\newcommand\opL{\mathscr{L}}
 
\newcommand\cutL{\mathbf{L}}

\newcommand\maxM{\mathscr{M}}

\newcommand\aiV{\mathscr{V}} 
\newcommand{\taiV}{\wt{\aiV}}

\newcommand\opK{\mathscr{K}}  

\newcommand{\opS}{\mathscr{S}}

\newcommand{\opT}{\mathscr{T}}

\newcommand{\defeq}{\mathrel{:=}}

\newcommand\wt{\widetilde}

\newcommand\e{\mathrm{e}}

\DeclareMathOperator{\sgn}{sgn}
\DeclareMathOperator{\supp}{supp}

\newcommand{\cutF}{\mathscr{F}}
\newcommand{\maxG}{\mathscr{G}}

\newcommand{\tc}{\mathop{\colon}}

\newcommand{\RR}{\mathbb{R}}
\newcommand{\CC}{\mathbb{C}}
\newcommand{\NN}{\mathbb{N}}

\newcommand{\chr}{\mathbf{1}}

\newcommand{\loc}{\mathrm{loc}}

\DeclareMathOperator*{\esssup}{ess\,sup}

\def\Xint#1{\mathchoice
   {\XXint\displaystyle\textstyle{#1}}%
   {\XXint\textstyle\scriptstyle{#1}}%
   {\XXint\scriptstyle\scriptscriptstyle{#1}}%
   {\XXint\scriptscriptstyle\scriptscriptstyle{#1}}%
   \!\int}
\def\XXint#1#2#3{{\setbox0=\hbox{$#1{#2#3}{\int}$}
     \vcenter{\hbox{$#2#3$}}\kern-.5\wd0}}

\def\dashint{\Xint-}

\begin{document}

\title[Maximal characterisation of local Hardy spaces]{Maximal characterisation 
of local Hardy spaces \\ on locally doubling manifolds}  

\subjclass[2010]{42B30, 42B35, 58C99} 

\keywords{Hardy space, maximal function, Riemannian manifold, exponential growth, locally doubling space}

\thanks{Work partially supported by PRIN 2015 ``Real and complex manifolds: 
geometry, topology and harmonic analysis''.
The first- and third-named authors are members of the Gruppo Nazionale per l'Analisi Matematica, 
la Probabilit\`a e le loro Applicazioni (GNAMPA) of the Istituto 
Nazionale di Alta Matematica (INdAM)}

\author[A. Martini]{Alessio Martini}
\address[Alessio Martini]{
School of Mathematics \\ University of Birmingham\\ Edgbaston, Birmingham, B15 2TT  \\ United Kingdom}
\email{a.martini@bham.ac.uk}

\author[S. Meda]{Stefano Meda}
\address[Stefano Meda]{
Dipartimento di Matematica e Applicazioni
\\ Universit\`a di Milano-Bicocca\\
via R.~Cozzi 53\\ I-20125 Milano\\ Italy}
\email{stefano.meda@unimib.it}

\author[M. Vallarino]{Maria Vallarino}
\address[Maria Vallarino]{
Dipartimento di Scienze Matematiche 
"Giuseppe Luigi Lagrange", Dipartimento di Eccellenza 2018-2022
\\ Politecnico di Torino\\
corso Duca degli Abruzzi 24\\ 10129 Torino\\  Italy}
\email{maria.vallarino@polito.it}

\begin{abstract}
We prove a radial maximal function characterisation of the local atomic Hardy space
$\ghu(M)$ on a Riemannian manifold $M$ with positive injectivity radius and Ricci curvature bounded from below. 
As a consequence, we show that an integrable function belongs to $\ghu(M)$ if and only if either
its local heat maximal function or its local Poisson maximal function are integrable.  
A key ingredient is a decomposition of H\"older cut-offs in terms 
of an appropriate class of approximations of the identity, which we obtain on arbitrary Ahlfors-regular metric measure spaces and generalises a previous result of A.~Uchiyama.
\end{abstract}

\maketitle

\section{Introduction} \label{s:Introduction}

D.~Goldberg \cite{G} introduced a ``local'' Hardy space $\ghu(\RR^n)$, which may be
defined in several equivalent ways:
\begin{equation} \label{f: ghuBRn}
	\begin{split}
\ghu(\RR^n)
		& = \left\{f \in L^1(\RR^n) \tc \mod{\nabla (\opId-\Delta)^{-1/2} f} \in L^1(\RR^n) \right\} \\
		& = \left\{f \in L^1(\RR^n) \tc {\sup_{0<t\leq 1}\, \mod{\sgrH_t f}} \in L^1(\RR^n) \right\} \\
		& = \left\{f \in L^1(\RR^n) \tc {\sup_{0<t\leq 1}\, \mod{\sgrP_t f}} \in L^1(\RR^n) \right\},
	\end{split}
\end{equation}
where $\opId$ denotes the identity map, $\Delta$ the standard Laplacian, $\sgrH_t$ the heat semigroup $\e^{t\Delta}$
and $\sgrP_t$ the Poisson semigroup $\e^{-t\sqrt{-\Delta}}$.   Furthermore, $\ghu(\RR^n)$
admits both an atomic and a ionic decomposition, and can be characterised in terms
of a suitable ``grand maximal function''.

The main advantage of working with $\ghu(\RR^n)$ rather than with the classical Hardy space $\hu(\RR^n)$ \cite{FS}
is that $\ghu(\RR^n)$ is preserved by multiplication by smooth functions with compact support.  This makes 
$\ghu(\RR^n)$ very effective in many situations in which localisation arguments are involved.  

Analogues of $\ghu(\RR^n)$ may be defined in a variety of settings.  In particular, all the 
definitions mentioned above in the Euclidean case make sense on any (complete) Riemannian manifold~$M$, the role of $\Delta$
being played by the Laplace--Beltrami operator on $M$.  
It is then natural to speculate whether all such definitions give rise to the same space.  
Even a bare knowledge of the theory of $\ghu(\RR^n)$
suggests that the key properties of this space depend mainly on the local structure of the Euclidean space.  This leads
to conjecture that a theory parallel to that in $\RR^n$ should hold on any Riemannian manifold where
the local geometry is somewhat uniformly controlled.  

A number of results in this direction are available in the case where the manifold is \emph{doubling}. Indeed, an extensive theory of local Hardy spaces has been developed in the general context of doubling metric measure spaces (see, e.g., \cite{HMY,YZ} and references therein), and includes both atomic and maximal characterisations (at least, under assumptions such as the ``reverse doubling'' condition). This theory is somewhat parallel to that of the ``global'' Hardy space $H^1$ \`a la Coifman--Weiss \cite{CW} on spaces of homogeneous type. However, due to the aforementioned local nature of $\ghu$, a global assumption such as the doubling condition does not appear entirely natural for its study, and one may expect that a richer theory could be developed, also encompassing non-doubling manifolds.

This problem
 has been considered by M.~Taylor \cite{T}, who introduced a local Hardy space $\ghu(M)$ on Riemannian $n$-manifolds $M$ with \emph{strongly bounded geometry} 
(positive injectivity radius and uniform control of all the derivatives of the metric tensor) via a grand maximal function characterisation;
 more precisely, Taylor defines
\begin{equation}\label{f: grandmaxchar}
\ghu(M)=\left\{f\in L^1_\loc(M) \tc \maxG^b  f\in L^1(M) \right\} ,
\end{equation}
where
\begin{equation}\label{f: grandmax}
\maxG^b f(x) 
\defeq \sup_{r \in (0,1]} \sup_{\phi\in \cutL(x,r)} \left|\int_M \phi f\wrt\mu\right|
,
\end{equation}
$\mu$ is the Riemannian measure and, 
 for every $x\in M$ and $r\in (0,1]$, $\cutL(x,r)$ is the collection 
of all $C^1$ functions on $M$ with Lipschitz constant at most $r^{-(n+1)}$ supported in the ball of centre $x$ and radius $r$.
 Further extensions of the theory are due to S.~Volpi and the second-named author \cite{Vo,MVo}, who studied an atomic local Hardy space in the more general context of \emph{locally doubling} metric measure spaces (see Section \ref{s: Background material} below for additional details).
 The atomic space of \cite{MVo} coincides with the space of \cite{T} in the case of manifolds with strongly bounded geometry (see Remark \ref{rem:grandmax_atomic} below), and also with that of \cite{YZ} in the case of doubling spaces. These works, however, do not address the issue of whether the local Hardy space admits characterisations analogous to \eqref{f: ghuBRn} in a non-doubling setting.

Our research was motivated by the following simple question.  Suppose that $M$ is a Riemannian manifold of dimension $n$,
denote by $\opL$ the (positive) Laplace--Beltrami operator on $M$, consider the
associated heat and Poisson semigroups, namely $\sgrH_t \defeq \e^{-t\opL}$ and $\sgrP_t \defeq \e^{-t\sqrt \opL}$,
and the spaces 
\begin{align*}
\ghuH(M)
	& \defeq \left\{f \in L^1(M) \tc \sup_{0<t\leq 1} \mod{\sgrH_t f} \in L^1(M) \right\}, \\
\ghuP(M)
	& \defeq \left\{f \in L^1(M) \tc \sup_{0<t\leq 1} \mod{\sgrP_t f} \in L^1(M) \right\}.
\end{align*}
What geometric assumptions are needed in order that
\begin{equation}\label{eq:question}
\ghu(M) = \ghuH(M) = \ghuP(M),
\end{equation}
where $\ghu(M)$ denotes the atomic local Hardy space of \cite{MVo,T}?

Despite our efforts, we have not been able to find in the literature a proof of the equivalence 
of $\ghu(M)$, $\ghuH(M)$ and $\ghuP(M)$ on a general class of noncompact manifolds extending beyond the doubling ones.  
As suggested above, some ``uniformity'' of
the local geometry should be the essential feature of $M$ in order that the desired equivalence hold.  
One of our main results states that, if $M$ has \emph{bounded geometry}, viz.\ positive injectivity radius and Ricci curvature bounded from below (a weaker assumption than that of \cite{T}), then 
indeed \eqref{eq:question} holds true.

As a matter of fact, for the same class of manifolds $M$, we prove a much more general characterisation of $\ghu(M)$ in terms of an arbitrary ``radial maximal function'', associated to a family of integral operators
\[
\opK_t f(x) = \int_M K(t,x,y) \, f(y) \wrt \mu(y), \qquad t \in (0,1],
\]
whose integral kernel $K$ satisfies suitable assumptions. Roughly speaking (see Section \ref{s:manifold_maximal} below for details), we require $K$ to decompose as the sum $K^0 + K^\infty$, where the local part $K^0$ is supported in a $t$-independent neighbourhood of the diagonal and satisfies bounds of the form
\begin{equation}\label{eq:local_bds_ai}
0 \leq K^0(t,x,y) \leq C t^{-n} (1+d(x,y)/t)^{-n-\gamma}, \qquad K^0(t,x,x) \geq c t^{-n}
\end{equation}
and a $\gamma$-H\"older condition for some $\gamma \in (0,1]$, while the global part $K^\infty$ satisfies the ``uniform integrability'' condition
\begin{equation}\label{eq:global_bds_ai}
\sup_{x \in M} \int_M \sup_{0 < t \leq 1} |K^\infty(t,x,y)| \wrt \mu(y) < \infty.
\end{equation}
Here $d$ and $\mu$ denote the Riemannian distance and measure on $M$ respectively. Under these assumptions on $K$, we prove the maximal characterisation
\begin{equation}\label{eq:max_char}
\ghu(M) = \left\{ f \in L^1(M) \tc \sup_{0 < t \leq 1} |\opK_t f| \in L^1(M) \right\}
\end{equation}
for all manifolds $M$ with bounded geometry.

Bounds similar to \eqref{eq:local_bds_ai} are considered in many places in the literature, including the aforementioned works \cite{HMY,YZ} on local Hardy spaces in a doubling context, where various classes of kernels (dubbed ``approximations of the identity'') are considered. It should be pointed out that \cite{HMY,YZ} do not require nonnegativity or on-diagonal lower bounds as in \eqref{eq:local_bds_ai}; however, they impose, \emph{inter alia}, a normalisation condition of the form
\begin{equation}\label{eq:normalisation}
\int_M K(t,x,z) \wrt \mu(z) = 1 = \int_M K(t,z,x) \wrt \mu(z) \quant x \in M \quant t \in (0,1].
\end{equation}
Such a condition is quite delicate, in the sense that it is not generally preserved by localisation procedures, such as multiplication by cutoff functions, or changes of variables (involving a change of measure). Indeed, the very construction of ``approximations of the identity'' satisfying \eqref{eq:normalisation} on arbitrary doubling spaces is itself not a trivial matter (see, e.g., \cite[Theorem 2.6]{HMY}). In these respects, our assumptions on the kernel, which do not include \eqref{eq:normalisation}, appear to be more robust in nature, and this feature actually turns out to be essential for our proof.

Indeed, in order to prove the maximal characterisation of $\ghu(M)$ in terms of a given kernel $K$, we reduce through a localisation argument (partly inspired by ideas in \cite{T}) to proving the analogous characterisation of $\ghu(\RR^n)$ in terms of localised versions of $K$. However, even if we start with a particularly well-behaved kernel $K$ on $M$ (such as the heat or Poisson kernels), which satisfies the normalisation condition \eqref{eq:normalisation}, there is no reason why the resulting localised kernels on $\RR^n$ would have the same property. Hence, maximal characterisations such as that in \cite{YZ} do not appear to directly apply to the problem at hand.

Instead, here we resort to a different approach, based on a deep result of A.\ Uchiyama \cite{U} (see also \cite[pp.\ 641--642]{CW} for an antecedent of the result).
In \cite{U}, among other things, a maximal characterisation for the Coifman--Weiss Hardy space $H^1$ is proved in the context of \emph{Ahlfors-regular} metric measure spaces (a subclass of doubling spaces including $\RR^n$), in terms of arbitrary kernels $K$ satisfying pointwise bounds analogous to \eqref{eq:local_bds_ai}, \emph{without any normalisation assumptions}. Roughly speaking, the approach in \cite{U} goes by showing that any $\gamma$-H\"older cutoff can be written as an infinite linear combination $\sum_j c_j K(t_j,x_j,\cdot)$ for appropriate choices of coefficients $c_j$, times $t_j$ and points $x_j$; this decomposition in turn yields a majorisation of the grand maximal function defined via $\gamma$-H\"older cutoffs in terms of the radial maximal function associated to $K$.

Actually Uchiyama's result does not directly apply to our setting, since he works with $H^1$ instead of $\ghu$, and correspondingly he considers ``global'' kernels $K(t,x,y)$ defined for all $t \in (0,\infty)$; as a matter of fact, the decomposition given in \cite{U} of a given $\gamma$-H\"older cutoff may include terms $K(t_j,x_j,\cdot)$ with $t_j$ arbitrarily large. A contribution of the present paper, which may be of independent interest, is the adaptation of Uchiyama's argument to the case of local Hardy spaces and kernels, which is presented in Section \ref{s:Uchiyama} below in the generality of Ahlfors-regular metric spaces. Differently from \cite{U}, here we provide a decomposition of $\gamma$-H\"older cutoffs at scale $s$ that only employs terms $K(t_j,x_j,\cdot)$ with $t_j \leq s$; this allows us to work with kernels $K(t,x,y)$ only defined for $t \in (0,1]$, since in the case of local Hardy spaces we are only interested in small scales. This variant of Uchiyama's result is the crucial ingredient that allows us to close the localisation argument and prove in Section \ref{s:manifold_maximal} the maximal characterisation \eqref{eq:max_char} on manifolds with bounded geometry.

By using well-known Gaussian-type heat kernel bounds for small times, one can readily check that the heat and Poisson kernels on a manifold $M$ with bounded geometry satisfy the assumptions \eqref{eq:local_bds_ai} and \eqref{eq:global_bds_ai}. Indeed, in Section \ref{s:heat_poisson}, we show that this is the case more generally for the semigroups $\e^{-t\opL^\alpha}$ with $\alpha \in (0,1]$, thus obtaining the characterisation
\[
\ghu(M) = \left\{f \in L^1(M) \tc \sup_{0<t\leq 1} \mod{\e^{-t\opL^\alpha} f} \in L^1(M) \right\}, 
\]
which includes \eqref{eq:question} as a special case.

\medskip

The present paper does not address the problem whether $\ghu(M)$ also admits a local Riesz transform characterisation, analogous to the first identity in \eqref{f: ghuBRn} for the case of $\ghu(\RR^n)$. This deceptively simple question turns out to require a much more sophisticated analysis, and is solved in the affirmative in a recent work of G.~Veronelli and the second-named author \cite{MVe}. One of the ingredients used in \cite{MVe} is the Poisson maximal characterisation $\ghu(M) = \ghuP(M)$ that we prove here.

Another question that we do not address here is the investigation of spaces defined in terms of ``global'' maximal functions, such as
\begin{align*}
H^1_\sgrH(M) &= \left\{ f \in L^1(M) \tc \sup_{0<t<\infty} |\sgrH_t f| \in L^1(M) \right\}, \\
H^1_\sgrP(M) &= \left\{ f \in L^1(M) \tc \sup_{0<t<\infty} |\sgrP_t f| \in L^1(M) \right\},
\end{align*}
in the context of a nondoubling manifold $M$. Nevertheless, the results in the present paper turn out to be instrumental in the analysis of such spaces and their relation to the Hardy-type spaces $\mathfrak{X}^\gamma(M)$ introduced in \cite{Vo,MMV}, which we plan to develop in a future work \cite{MMVV}.

\medskip

It is an interesting question whether the maximal characterisation \eqref{eq:max_char} extends to larger classes of Riemannian manifolds, or even more general spaces. A particularly natural setting for this investigation would be that of the locally doubling metric measure spaces considered in \cite{MVo}. Given that our core ingredient (the local variant of Uchiyama's result) is proved for general Ahlfors-regular spaces, it would seem natural to conjecture that a maximal characterisation of $\ghu$ in terms of a single kernel holds at least on metric measure spaces satisfying a suitable ``local Ahlfors'' condition. Extensions to even broader classes of spaces may also be possible; however, even in the case of globally doubling spaces, radial maximal characterisations of $H^1$ and $\ghu$ appear to be available only under additional assumptions, such as a reverse doubling condition (see, e.g., \cite{GLY,YZ,YZ2}), so tackling the general case of locally doubling spaces may be a nontrivial problem.

\medskip 

We shall use
the ``variable constant convention'', and denote by $C$
a constant that may vary from place to
place and may depend on any factor quantified (implicitly or explicitly)
before its occurrence, but not on factors quantified afterwards.

\section{Background on Hardy-type spaces}
\label{s: Background material}

Let $M$ denote a connected, complete $n$-dimensional Riemannian manifold
 with Riemannian measure $\mu$ and Riemannian distance $d$.  
Throughout this paper we assume that $M$ has \emph{bounded geometry}, that is,
\begin{enumerate}[label=(\Alph*)]
\item\label{en:ass_inj} the injectivity radius $\iota_M$ of $M$ is positive;
\item\label{en:ass_ric} the Ricci tensor of $M$ is bounded from below. 
\end{enumerate}

For $p$ in $[1,\infty]$, $\|f\|_p$ denotes the $L^p(M)$ norm of $f$ (with respect to the Riemannian measure).
 
We denote by $\Balls$ the family of all geodesic balls on $M$.
For each $B$ in $\Balls$ we denote by $c_B$ and $r_B$
the centre and the radius of $B$ respectively.  
Furthermore, we denote by $c \, B$ the
ball with centre $c_B$ and radius $c \, r_B$.
For each \emph{scale parameter} $s$ in $\RR^+$, 
we denote by $\Balls_s$ the family of all
balls $B$ in $\Balls$ such that $r_B \leq s$.  We also write $B_r(x)$ for the geodesic ball of centre $x \in M$ and radius $r > 0$.

For manifolds satisfying \ref{en:ass_inj}-\ref{en:ass_ric} above,
the Bishop--Gromov theorem (see, e.g., \cite[Theorem III.4.4]{Ch} or \cite[Section 5.6.3]{SC}) and results by Anderson--Cheeger on harmonic coordinates (see, for instance, \cite[Theorem~1.2]{He})
imply the following properties.
\begin{enumerate}[label=(\alph*)]
\item\label{en:localahlfors}
$M$ is \emph{uniformly locally $n$-Ahlfors}, i.e.\ for every $s>0$ there exists a positive constant $D_s$ such that 
\begin{equation} \label{f: local Ahlfors}
D_s^{-1}\,r_B^n
\leq \mu(B)
\leq D_s\,r_B^n
\quant B\in\Balls_s;
\end{equation}
in particular, $M$ satisfies the \emph{local doubling property}, i.e.\ for every $s > 0$ there exists a positive constant $D_s'$ such that
\begin{equation}\label{f: local doubling}
\mu(2B) \leq D_s' \, \mu(B) \quant B \in \Balls_s.
\end{equation}
\item\label{en:exp_growth}
$M$ has \emph{at most exponential growth}, i.e.\ there exist positive constants $a$ and $\beta$ such that
\begin{equation}\label{eq:exp_growth}
\mu(B) \leq a \exp(\beta r_B)  \quant B \in \Balls.
\end{equation}
\item\label{en:harmonic_radius}
For all $Q>1$ and $\alpha \in (0,1)$,
the $(Q^2,0,\alpha)$-\emph{harmonic radius} $r_H$ of $M$ is strictly positive.  In particular,
to each point $x$ in $M$ we can associate 
a harmonic co-ordinate system $\eta_x$ centred at $x$ and defined on $B_{r_H}(x)$ such that, in these coordinates, the metric tensor $(g_{ij})$ satisfies the estimate
\begin{equation} \label{f: control of d}
(\de_{ij})/Q^2 \leq (g_{ij}) \leq Q^2\, (\de_{ij}) \text{ as quadratic forms}
\end{equation}
at every point of $B_{r_H}(x)$. In particular, as a consequence,
\begin{equation} \label{f: control of d II}
\mod{Y-Z}/Q \leq d(y,z) \leq Q\, \mod{Y-Z}
\quant y,z \in B_{r_H/2}(x) \quant x\in M,
\end{equation}
where $Y = \eta_x(y)$ and $Z = \eta_x(z)$.
Moreover \eqref{f: control of d} 
implies that
\begin{equation}\label{f: control of d III}
Q^{-n} \leq \sqrt{\,\overline{g}\,} \leq Q^{n} \,,
\end{equation}
where $\overline{g}$ denotes the determinant of the metric tensor.  
\end{enumerate}

We point out that the key aspect of the estimates \ref{en:localahlfors}-\ref{en:harmonic_radius} is their uniformity with respect to the centre of the ball $B$ or the point $x$. Indeed, if one does not care about uniformity, then estimates similar to those in \ref{en:localahlfors} and \ref{en:harmonic_radius} are easy consequences of the properties of normal coordinates (see, e.g., \cite[Proposition 5.11]{Lee}), and do not require the assumptions \ref{en:ass_inj}-\ref{en:ass_ric}. This includes, for all $x \in M$, the volume asymptotics
\begin{equation}\label{eq:vol_asym}
\mu(B_r(x)) = \omega_n r^n (1+o(1)) \quad\text{as } r \to 0^+,
\end{equation}
where $\omega_n$ is the volume of the unit ball in Euclidean $n$-space.

\bigskip

We now recall the definition of the atomic local Hardy space $\ghu(M)$. This is a particular instance of the local Hardy space introduced by Volpi \cite{Vo}, who extended previous 
work of Taylor \cite{T}, and then further generalised in \cite{MVo}.

\begin{definition} \label{d: atom}
Fix $s>0$.  Suppose that $p$ is in $(1,\infty]$ and let $p'$ be the index conjugate to $p$. 
A \emph{standard $p$-atom at scale $s$} 
is a function $a$ in $L^1(M)$ supported in a ball $B$ in $\Balls_s$ 
satisfying the following conditions:
\begin{enumerate}[label=(\roman*)]
\item
\emph{size condition}: $\norm{a}{p}  \leq \mu (B)^{-1/p'}$;
\item \emph{cancellation condition}:  
$\int_B a \wrt \mu  = 0$. 
\end{enumerate}
A \emph{global $p$-atom} at scale $s$ is a function $a$
in $L^1(M)$ supported in a ball $B$ of radius \emph{exactly equal to} $s$ 
satisfying the size condition above (but possibly not the cancellation condition).
Standard and global $p$-atoms at scale $s$ will be referred to simply as $p$-\emph{atoms} at scale $s$.
\end{definition}

\begin{definition} \label{d: Goldberg}
The \emph{local atomic Hardy space} $\frh_s^{1,p}({M})$ is the 
space of all functions~$f$ in $L^1(M)$
that admit a decomposition of the form
\begin{equation} \label{f: decomposition}
f = \sum_{j=1}^\infty \la_j \, a_j,
\end{equation}
where the $a_j$'s are $p$-atoms at scale $s$ and $\sum_{j=1}^\infty \mod{\la_j} < \infty$.
The norm $\norm{f}{\frh_s^{1,p}}$ of $f$ is the infimum of $\sum_{j=1}^\infty \mod{\la_j}$
over all decompositions (\ref{f: decomposition}) of $f$. 
\end{definition}

One can prove (see, for instance, \cite{MVo}) that $\frh_s^{1,p}$ is independent of both  
$s$ in $(0,\infty)$ and $p$ in $(1,\infty]$ (in the sense that different choices of the parameters define equivalent norms); henceforth it will be denoted simply by $\ghu(M)$, and $\|f\|_{\ghu}$ will denote the norm $\|f\|_{\frh_1^{1,2}}$. We will also say ``$\ghu(M)$-atom at scale $s$'' instead of ``$2$-atom at scale $s$''.

The following statement will be useful in proving boundedness properties of sublinear operators defined on $\ghu(M)$.

\begin{lemma}\label{lemma:sublinear_hardy}
Let $\opT : L^1(M) \to L^{1,\infty}(M)$ be a bounded sublinear operator. Let $p \in (1,\infty]$ and $s>0$. If
\[
\sup \left\{ \| \opT a \|_{L^1(M)} \tc a \text{ $p$-atom at scale $s$ on $M$} \right\} < \infty,
\]
then $\opT$ maps $\ghu(M)$ into $L^1(M)$ boundedly.
\end{lemma}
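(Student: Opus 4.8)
The plan is to deduce the $\ghu(M)$-boundedness from the atomic one by the usual density-plus-uniform-estimate argument, with attention paid to the fact that $\opT$ is only \emph{sublinear} and a priori only weakly bounded on $L^1$. First I would recall that, by the definition of $\ghu(M)$ via $\frh^{1,p}_s$, the space of finite linear combinations of $p$-atoms at scale $s$ is dense in $\ghu(M)$: indeed, if $f = \sum_{j\ge 1}\la_j a_j$ with $\sum_j|\la_j| \le \|f\|_{\ghu} + \vep$, then the partial sums $f_N = \sum_{j\le N}\la_j a_j$ are finite atomic combinations converging to $f$ in $\ghu(M)$ (the tail $\sum_{j>N}|\la_j| \to 0$). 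So it suffices to prove the uniform bound $\|\opT g\|_{L^1} \le C\|g\|_{\ghu}$ for every such finite combination $g$, and then extend by density.

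Next I would fix a finite combination $g = \sum_{j=1}^N \la_j a_j$ of $p$-atoms at scale $s$. By sublinearity of $\opT$ we have the pointwise bound $|\opT g| \le \sum_{j=1}^N |\la_j|\,|\opT a_j|$ (finiteness of the sum is what makes this legitimate without any linearity on infinite sums), hence
\[
\|\opT g\|_{L^1(M)} \le \sum_{j=1}^N |\la_j| \, \|\opT a_j\|_{L^1(M)} \le \Bigl( \sup_{a} \|\opT a\|_{L^1(M)} \Bigr) \sum_{j=1}^N |\la_j|,
\]
the supremum being over all $p$-atoms at scale $s$, which is finite by hypothesis. Taking the infimum over all finite atomic decompositions of $g$ (equivalently, using that $\|g\|_{\frh^{1,p}_s}$ is comparable to $\|g\|_{\ghu}$ by the equivalence of norms recalled after Definition~\ref{d: Goldberg}) gives $\|\opT g\|_{L^1(M)} \le C\,\|g\|_{\ghu(M)}$ with $C$ independent of $g$.

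Finally I would pass from finite atomic combinations to general $f \in \ghu(M)$. Given such $f$, pick finite combinations $g_N \to f$ in $\ghu(M)$ as above. Since $\ghu(M) \hookrightarrow L^1(M)$ continuously, $g_N \to f$ also in $L^1(M)$, and since $\opT : L^1 \to L^{1,\infty}$ is bounded, $\opT g_N \to \opT f$ in $L^{1,\infty}(M)$, hence along a subsequence $\opT g_{N_k} \to \opT f$ $\mu$-a.e. By Fatou's lemma,
\[
\|\opT f\|_{L^1(M)} \le \liminf_{k\to\infty} \|\opT g_{N_k}\|_{L^1(M)} \le C \liminf_{k\to\infty} \|g_{N_k}\|_{\ghu(M)} = C\,\|f\|_{\ghu(M)},
\]
which is the desired conclusion; in particular $\opT f \in L^1(M)$. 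The only mildly delicate point — and the one I would be most careful about — is the interplay between sublinearity and limits: sublinearity gives no control of $\opT f - \opT g_N$ directly, so one genuinely needs the weak-$L^1$ boundedness to force a.e.\ convergence of a subsequence of $\opT g_N$ to $\opT f$, together with Fatou to recover the strong $L^1$ bound; everything else is routine.
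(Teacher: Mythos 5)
Your proof is correct and follows essentially the same route as the paper's: decompose $f$ into atoms, bound $\opT$ on the finite partial sums by sublinearity together with the uniform atomic estimate, and use the $L^1\to L^{1,\infty}$ boundedness to identify the limit of $\opT f_N$ with $\opT f$. The only genuine difference is the final limiting step: the paper shows that $(\opT f_N)_N$ is Cauchy in $L^1(M)$, using sublinearity to control $|\opT f_N-\opT f_{N'}|$ by $\sum_{j>N}|\la_j|\,|\opT a_j|$, and then identifies the $L^1$ limit with $\opT f$ through the $L^{1,\infty}$ convergence; you instead extract an a.e.\ convergent subsequence (convergence in $L^{1,\infty}$ implies convergence in measure) and apply Fatou. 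Your variant is perfectly valid and, if anything, asks slightly less of the notion of sublinearity, since it only uses the pointwise bound $|\opT g|\leq\sum_{j\leq N}|\la_j|\,|\opT a_j|$ for finite sums rather than a bound on differences $|\opT f_N - \opT f_{N'}|$. One small imprecision: for a finite combination $g$ you assert $\|\opT g\|_{L^1}\leq C\|g\|_{\ghu}$ by ``taking the infimum over all finite atomic decompositions of $g$'', but the $\frh^{1,p}_s$ norm is an infimum over all countable decompositions, and it is not clear that restricting to finite decompositions yields a comparable quantity. Fortunately this intermediate claim is not needed: for the specific partial sums $g_N$ of a near-optimal decomposition of $f$ you have directly $\|\opT g_N\|_{1}\leq C_0\sum_{j\leq N}|\la_j|\leq C_0\bigl(C\|f\|_{\ghu}+\vep\bigr)$, and Fatou along the a.e.\ subsequence followed by letting $\vep\to 0$ (equivalently, taking the infimum over decompositions of $f$ at the end) gives the stated conclusion exactly as you intend.
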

\begin{proof}
Suppose that $f$ is in $\ghu(M)$, and write $f= \sum_{j=1}^\infty \lambda_j \, a_j$, where $a_j$ are
$p$-atoms at scale $s$, and $\sum_{j=1}^\infty |\lambda_j| < \infty$.

 For each positive integer $N$, write 
$f_N = \sum_{j=1}^N \lambda_j\, a_j$ and note that $f_N$ tends to $f$ in $\ghu(M)$.
Since $\ghu(M)$ is continuously contained in $L^1(M)$, $\opT f$ is a well defined element of the Lorentz space 
$L^{1,\infty}(M)$, and $\opT f_N$ tends to $\opT f$ in $L^{1,\infty}(M)$.

On the other hand, by sublinearity, if $N'>N$, then
\[
|\opT f_N - \opT f_{N'}| \leq |\opT (f_N-f_N')| \leq \sum_{j=N+1}^{N'} |\lambda_j| |\opT a_j|,
\]
so from the uniform boundedness of $\opT$ on atoms and the convergence of the series $\sum_j |\lambda_j|$ we deduce that $\opT f_N$ is a Cauchy sequence in $L^1(M)$. By uniqueness of limits, we conclude that $\opT f_N$ converges to $\opT f$ in $L^1(M)$, and
\[
\|\opT f\|_1 = \lim_{N \to \infty} \left\|\opT f_N\right\|_{1} \leq \sum_{j=1}^\infty |\lambda_j| \left\|\opT a_j\right\|_1 \leq C \sum_{j=1}^\infty |\lambda_j|.
\]
By taking the infimum of both sides with respect to all the representations of $f$ as sum of $p$-atoms,
we obtain that 
$\norm{\opT f}{1} \leq C \norm{f}{\frh^{1,p}_{s}(M)}$,
as required.  
\end{proof}

The space $\ghu(M)$ can also be characterised in terms of ions as follows.   

\begin{definition}\label{d: ions}
Suppose that $s>0$, $p$ is in $(1,\infty]$ and let $p'$ be the index conjugate to~$p$. 
A \emph{$p$-ion} at scale $s$ is a function $g$ in $L^1(M)$ supported in a ball $B$ in $\Balls_s$ 
satisfying the following conditions:
\begin{enumerate}[label=(\roman*)]
\item\label{en:ions_size}
$\norm{g}{p}  \leq \mu (B)^{-1/p'}$;
\item\label{en:ions_canc}
$\left|\int_B g \wrt \mu\right|   \leq r_B$.
\end{enumerate}
\end{definition}

\begin{definition} \label{d: ionic}
The \emph{local ionic Hardy space} $\frh_{s,I}^{1,p}({M})$ is the 
space of all functions~$f$ in $L^1(M)$ that admit a decomposition of the form
\begin{equation} \label{f: ionicdecomposition}
f = \sum_{j=1}^\infty \la_j \, g_j,
\end{equation}
where the $g_j$'s are $p$-ions at scale $s$ and $\sum_{j=1}^\infty \mod{\la_j} < \infty$.
The norm $\norm{f}{\frh^{1,p}_{s,I}}$ of $f$ is the infimum of $\sum_{j=1}^\infty \mod{\la_j}$
over all decompositions (\ref{f: ionicdecomposition}) of $f$.
\end{definition}

It was proved in \cite[Theorem 1]{MVo} that for every $s>0$ and $p$ in $(1,\infty]$ the spaces 
$\frh^{1,p}_I({M})$ coincide with $\ghu(M)$. Moreover, for each $s>0$ and $p \in (1,\infty]$ there exists a positive constant $C$ such that 
\begin{equation}\label{f: equivionicatomic}
C^{-1} \norm{f}{\frh^{1,p}_{s,I}}
\leq \norm{f}{\ghu} 
\leq C \norm{f}{\frh^{1,p}_{s,I}}
\quant f\in \ghu(M)\,.
\end{equation}

The cancellation condition that standard atoms must satisfy is in general not preserved by changes of variables and localisations; however, as shown in the following lemma, performing such operations on an atom produces an ion (or a multiple thereof). This observation, together with the equivalence \eqref{f: equivionicatomic}, confirms that $\ghu(M)$ is amenable to localisations and changes of variables.

The following statement involves two Riemannian manifolds $M$ and $M'$, both satisfying the assumptions \ref{en:ass_inj}-\ref{en:ass_ric} above; correspondingly, we denote by $d$ and $d'$ the respective Riemannian distances, and by $\mu$ and $\mu'$ the Riemannian measures. The result is certainly known to experts, and implicit in the work of Taylor \cite{T}, under more restrictive assumptions on $M$ and $M'$.

\begin{lemma}\label{lem:atom_ion}
Let $p \in (1,\infty]$, $s,L>0$, and $A \geq 1$. 
Let $\phi : M \to \CC$ satisfy
\[
|\phi(x)| \leq L, \qquad |\phi(x)-\phi(y)| \leq L d(x,y)
\]
for all $x,y \in M$. 
Let $\Omega$ and $\Omega'$ be open subsets of $M$ and $M'$, and let $\Psi : \Omega' \to \Omega$ be a bi-Lipschitz map such that the Lipschitz constants of $\Psi$ and $\Psi^{-1}$ are both bounded by $A$. Let $\rho : \Omega' \to (0,\infty)$ be the density of the push-forward of $\mu$ via $\Psi^{-1}$ with respect to $\mu'$.
Then there exists a constant $H$, only depending on $M$, $M'$, $p$, $s$, $L$ and $A$, such that, 
for every $p$-atom $a$ at scale $s$ on $M$, supported in a ball $B \subseteq \Omega$,
if $g : M' \to \CC$ is defined by
\[
g(x') = \begin{cases}
\rho(x') \phi(\Psi(x')) a(\Psi(x'))/H &\text{if } x' \in \Omega',\\
0 &\text{otherwise},
\end{cases}
\]
then $g$ is a $p$-ion on $M'$ at scale $A s$. 
\end{lemma}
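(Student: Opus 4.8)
The plan is to verify the two defining conditions of a $p$-ion at scale $As$ for the function $g$, namely the support/size condition and the weak cancellation condition, by tracking how each is affected by multiplication by $\phi$ and by the change of variables $\Psi$. First I would pin down the support: since $a$ is supported in a ball $B \subseteq \Omega$ with $r_B \leq s$, the function $g$ is supported in $\Psi^{-1}(B) \subseteq \Omega'$, which, because $\Psi^{-1}$ has Lipschitz constant at most $A$, is contained in a ball $B'$ on $M'$ of radius at most $A r_B \leq As$; so $B' \in \Balls_{As}$ as required.

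Next I would handle the size condition $\|g\|_{L^p(M')} \leq \mu'(B')^{-1/p'}$. Writing out the $L^p(M')$ norm of $g$ and changing variables via $\Psi$ (so that $\rho$ is exactly the Jacobian factor converting $\mathrm{d}\mu'$ back to $\mathrm{d}\mu$), one gets $\|g\|_p^p = H^{-p} \int_B |\phi(x)|^p |a(x)|^p \rho(\Psi^{-1}(x))^{p-1} \wrt\mu(x)$ in the case $p<\infty$ (with the obvious sup-version for $p=\infty$). Using $|\phi| \leq L$ and the atomic size bound $\|a\|_p \leq \mu(B)^{-1/p'}$, together with the fact that $\rho$ and $1/\rho$ are bounded above and below by constants depending only on $A$ and the two manifolds (since $\Psi$ is bi-Lipschitz with constants bounded by $A$, the push-forward density $\rho$ satisfies $A^{-N} \lesssim \rho \lesssim A^{N}$ for a suitable exponent, by comparing volumes of small balls using the uniform local Ahlfors property \eqref{f: local Ahlfors} and the volume asymptotics \eqref{eq:vol_asym}), one bounds $\|g\|_p$ by $C H^{-1} \mu(B)^{-1/p'}$; finally comparing $\mu(B)$ with $\mu'(B')$ (again via the bi-Lipschitz change of variables and local Ahlfors regularity at scales $\leq As$) gives $\|g\|_p \leq C H^{-1} \mu'(B')^{-1/p'}$, so choosing $H$ to absorb the constant $C$ yields the size condition.

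For the cancellation condition $|\int_{B'} g \wrt\mu'| \leq r_{B'}$, I would compute $\int_{M'} g \wrt\mu' = H^{-1}\int_B \phi(x) a(x) \wrt\mu(x)$ by undoing the change of variables (here $\rho$ disappears entirely, which is the point of building it into $g$). Now exploit the cancellation of $a$: fixing the centre $x_0$ of $B$, $\int_B \phi \, a \wrt\mu = \int_B (\phi(x) - \phi(x_0)) a(x) \wrt\mu(x)$ since $\int_B a \wrt\mu = 0$ (for a standard atom; for a global atom there is no cancellation, but then $r_B = s$ is fixed and one argues differently, see below). Estimating $|\phi(x)-\phi(x_0)| \leq L d(x,x_0) \leq L r_B$ on $B$ and using Hölder with the size bound $\|a\|_1 \leq \|a\|_p \mu(B)^{1/p'} \leq 1$ gives $|\int_B \phi a \wrt\mu| \leq L r_B$, hence $|\int g \wrt\mu'| \leq H^{-1} L r_B \leq H^{-1} L A r_{B'} \cdot$(comparison constant)$ \leq r_{B'}$ for $H$ large enough. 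In the global-atom case ($r_B$ exactly $s$), one simply uses $|\int \phi a \wrt\mu| \leq L \|a\|_1 \leq L$ and absorbs this into the bound $\leq r_{B'}$ using that $r_{B'}$ is comparable to $s$ from below — but here one must be a little careful, since $\Psi^{-1}(B)$ is only contained in $B'$, not equal to it, so $r_{B'}$ could in principle be small; the fix is to take $B'$ to be a ball of radius exactly comparable to $s$ (the smallest natural choice still has radius $\gtrsim s$ since it contains $\Psi^{-1}(B)$ which has $\mu'$-measure $\gtrsim \mu(B) \gtrsim s^n$ by the volume comparisons), so $r_{B'} \gtrsim s$ and the constant is again absorbed by $H$.

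The main obstacle is the bookkeeping around the density $\rho$ and the volume comparisons: one must check that the push-forward density $\rho$ is bounded above and below by constants depending only on $A$, $M$, $M'$ (and not on the particular point or ball), and that $\mu(B)$ and $\mu'(\Psi^{-1}(B))$ — as well as $\mu'(B')$ — are all comparable at scales $\leq As$, which is exactly where the \emph{uniform} local Ahlfors regularity \eqref{f: local Ahlfors} of both manifolds is essential (a non-uniform comparison would let the constant $H$ depend on the location of the atom, defeating the purpose). Once these uniform comparisons are in hand, every step is a routine estimate, and the flexibility in choosing $H$ large (depending only on the allowed data $M,M',p,s,L,A$) absorbs all the accumulated constants in both the size and cancellation conditions simultaneously.
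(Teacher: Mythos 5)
Your proposal is correct and follows essentially the same route as the paper's proof: verify the ion size and cancellation conditions for $g$, using the Lipschitz bound on $\phi$ together with the atom's cancellation (or $r_B=s$ for global atoms), a pointwise bound on the push-forward density $\rho$ via small-ball volume comparisons, and the uniform local Ahlfors property to compare $\mu(B)$ with $\mu'(B')$, absorbing all constants into $H$. The only cosmetic difference is that the paper takes $B'$ to be the ball of centre $\Psi^{-1}(c_B)$ and radius exactly $A r_B$, which makes $r_{B'}$ comparable to $r_B$ by construction and removes the extra lower bound on $r_{B'}$ you supply for the global-atom case.
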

\begin{proof}
Let $a$ be a $p$-atom at scale $s$ on $M$, supported in a ball $B \subseteq \Omega$. Let $H>0$ be a positive constant and define $g$ as above. In the course of the proof, we will determine what conditions $H$ must satisfy in order for the above statement to hold.

Let $B'$ be the ball on $M'$ of centre $\Psi^{-1}(c_B)$ and radius $Ar_B$. Then clearly $\Psi^{-1}(B) \subseteq B' \cap \Omega'$ and $g$ is supported in $B'$. Moreover
\[
\int_{B'} g \wrt\mu' = H^{-1} \int_{\Omega' \cap B'} \phi(\Psi(x')) a(\Psi(x')) \rho(x') \wrt\mu'(x') = H^{-1} \int_{B} \phi  a  \wrt\mu.
\]
If $a$ is a standard atom, then
\[
\left| \int_B \phi a \wrt\mu \right| = \left| \int_B (\phi - \phi(c_B)) a \wrt\mu \right| \leq L r_B  \|a\|_1 \leq L r_B,
\]
so the condition \ref{en:ions_canc} in Definition \ref{d: ions} is satisfied provided 
$H \geq L/A$. If instead $a$ is a global atom, then $r_B = s$ and
\[
\left| \int_B \phi a \wrt\mu \right| \leq L \|a\|_1 \leq L ,
\]
so the condition is satisfied provided
$H \geq L/(As)$.

As for the condition \ref{en:ions_size} of Definition \ref{d: ions}, let us first notice that,
 for all $x' \in \Omega'$,
\[
\rho(x') = \lim_{r \to 0^+} \frac{\mu(\Psi(B'_r(x'))}{\mu'(B'_r(x'))} \leq  \lim_{r \to 0^+} \frac{\mu(B_{Ar}(\Psi(x')))}{\mu'(B'_r(x'))} = A^n; 
\]
the latter equality is a consequence of \eqref{eq:vol_asym}.
Hence the size condition on the $p$-atom $a$ implies that
\[
\|g\|_{L^p(M')} \leq H^{-1} L A^{n/p'} \mu(B)^{-1/p'},
\]
where $p'$ is the conjugate exponent to $p$.
On the other hand, since $M$ and $M'$ are both uniformly locally $n$-Ahlfors, there exists a constant $\kappa \geq 1$, only depending on $M$, $M'$ and $s$, such that
\[
\mu'(B') \leq \kappa r_{B'}^n = \kappa A^n r_B^n \leq \kappa^2 A^n \mu(B),
\]
whence
\[
\|g\|_{L^p(M')} \leq H^{-1} L A^{2n/p'} \kappa^{2/p'} \mu'(B')^{-1/p'}.
\]
So the condition \ref{en:ions_size} of Definition \ref{d: ions} is satisfied provided $H \geq  L A^{2n/p'} \kappa^{2/p'}$.
\end{proof}

\begin{remark}\label{rem:grandmax_atomic}
As mentioned in the introduction, on a manifold $M$ which has 
strongly bounded geometry in the sense of \cite[Conditions (1.21)-(1.23)]{T} 
Taylor defined a local Hardy space by means of the grand maximal function \eqref{f: grandmax},
which turns out to be equivalent to the atomic space $\ghu(M)$ defined above (cf.\ \cite[Section 5]{T} and \cite[Theorem 1]{MVo}).
The results of this paper (see Corollary \ref{cor:grandmaximal} below) can actually be used to show that the grand maximal characterisation \eqref{f: grandmaxchar} of $\ghu(M)$ extends to the generality of the manifolds $M$ considered here.
\end{remark}

\section{Interlude: A result on metric measure spaces}\label{s:Uchiyama}

In this section we prove a variation of a result of Uchiyama \cite{U}, which plays a fundamental role in our proof of the radial maximal characterisation for local Hardy spaces. Differently from the rest of the paper, here we do not work on a Riemannian manifold $M$, but on a metric measure space $X$. Due to the different setting, part of the notation used here differs from that used in other sections.

Let $D \in (0,\infty)$. Let $(X,d,m)$ be a metric measure space which is $D$-\emph{Ahlfors regular}, i.e.\
there exists a constant $A \geq 1$ such that, for all $x \in X$ and $r \in [0,\infty)$, 
\begin{equation}\label{eq:ahlfors}
A^{-1} r^D \leq m(B(x,r)) \leq A r^D;
\end{equation}
here $B(x,r)$ denotes the ball of centre $x$ and radius $r$ in $X$. Lebesgue spaces $L^p(X)$ on $X$ are meant with respect to the measure $m$, and $\|f\|_p$ will denote the $L^p(X)$ norm (or quasinorm, if $p < 1$) of $f$.

The next definition closely follows \cite[eqs.\ (40)-(43)]{U}.

\begin{definition} \label{def: AI Ahlfors}
Let $\gamma \in (0,1]$. An \emph{approximation of the identity} (AI in the sequel) of exponent $\gamma$ on a $D$-Ahlfors regular space $X$
is a measurable function $K : (0,1] \times X \times X \to [0,\infty)$ such that 
for some $c \in (0,1)$, for all $t \in (0,1]$ and $x,y,z \in X$ such that $4d(y,z) \leq t+d(x,y)$,
\begin{align}
K(t,x,y) 
	&\leq t^{-D} \, (1+d(x,y)/t)^{-D-\ga}, \label{eq:Kupper} \\
K(t,x,x) 
	&\geq c \,t^{-D},\label{eq:Klower}\\
|K(t,x,y) - K(t,x,z)| 
	&\leq t^{-D} (d(y,z)/t)^\ga \, (1+d(x,y)/t)^{-D-2\ga}. \label{eq:Klip}
\end{align}
\end{definition}

\begin{remark}
The bounds \eqref{eq:Kupper} and \eqref{eq:Klip} can be equivalently rewritten as
\begin{align*}
K(t,x,y) 
	&\leq t^{\gamma} \, (t+d(x,y))^{-D-\ga}, \\ |K(t,x,y) - K(t,x,z)| &\leq (td(y,z))^\ga \, (t+d(x,y))^{-D-2\ga};
\end{align*}
in the case $\gamma=1$ and $X=\RR^D$, these bounds are clearly satisfied by $K(t,x,y) = t (t^2+|x-y|^2)^{-(D+1)/2}$, which is a constant multiple of the Poisson kernel.
\end{remark}

In the course of this section the exponent $\gamma \in (0,1]$ will be thought of as fixed.

\begin{remark}
If $K$ is an AI, then there exist $c_1,c_2 \in (0,1)$ such that, 
for all $t \in (0,1]$ and $x,y \in X$,
\begin{equation}\label{eq:nonvanishing}
K(t,x,y) \geq c_1 \, t^{-D} \qquad\text{whenever } d(x,y) \leq c_2\, t
\end{equation}
(more precisely, we can take $c_2 = \min\{(c/2)^{1/\ga},1/4\}$ and $c_1 = c/2$).
\end{remark} 

To a measurable kernel $K : (0,1] \times X \times X \to \CC$, we associate the corresponding integral operators $\opK_t$ for $t \in (0,1]$ and the (local) maximal operator $\opK_*$ defined by
\[
\opK_t f(x) = \int_{X}  K(t,x,y) \, f(y) \wrt m(y), 
		\qquad \opK_* f(x) = \sup_{t \in (0,1]} | \opK_t f(x) |.
\]
We also denote by $\maxM$ the \emph{(global) centred Hardy--Littlewood maximal function}:
\[
\maxM f(x) = \sup_{r \in (0,\infty)} \dashint_{B(x,r)} |f(y)| \,\wrt m(y)
\]
(here $\dashint_B f \,\wrt m = m(B)^{-1} \int_B f \,\wrt m$). As is well known, $\maxM$ is of weak type $(1,1)$ and bounded on $L^p(X)$ for all $p \in (1,\infty]$.

Finally, for all $x \in X$, $r \in (0,\infty)$, let $\cutF_\gamma(x,r)$ be the family of \emph{$\gamma$-H\"older cutoffs} on the ball $B(x,r)$, that is, the collection of all functions $\phi : X \to \RR$ such that, for all $y,z \in X$,
\[
\supp \phi \subseteq B(x,r),  \qquad
|\phi(y)| \leq r^{-D}, \qquad
|\phi(z) - \phi(y)| \leq r^{-D} (d(z,y)/r)^\gamma.
\]
Then we define the \emph{($\gamma$-H\"older) local grand maximal function} $\maxG_\gamma$ by
\begin{equation}\label{eq:def_grandmax}
\maxG_\gamma f(x) = \sup_{r \in (0,1]} \sup_{\phi \in \cutF_\gamma(x,r)} \left| \int_X \phi(y) f(y) \,\wrt m(y) \right| .
\end{equation}

The aim of the present section is the proof of the following result, which is a variation of \cite[Theorem 1$'$]{U} for local maximal functions.

\begin{theorem}\label{thm:main}
Let $K$ be an AI on $X$. Then, there exist $E \in (1,\infty)$ and $p \in (0,1)$ such that, for all $f \in L^1_\loc(X)$,
\begin{equation}\label{eq:pointwise_grand_maxk}
\maxG_\gamma f \leq E (\maxM((\opK_* f)^{p}))^{1/p}
\end{equation}
pointwise. In particular, for all $q \in (p,\infty]$, there exists $E_q \in (0,\infty)$ such that, for all $f \in L^1_\loc(X)$,
\begin{equation}\label{eq:lq_grand_maxk}
\|\maxG_\gamma f\|_{q} \leq E_q \| \opK_* f \|_{q}.
\end{equation}
\end{theorem}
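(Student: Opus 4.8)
The plan is to follow Uchiyama's strategy from \cite{U}, but in a form adapted to the local setting. The heart of the matter is the pointwise estimate \eqref{eq:pointwise_grand_maxk}: once this is established, \eqref{eq:lq_grand_maxk} follows immediately by applying the $L^{q/p}$ boundedness of $\maxM$ (valid since $q/p > 1$), together with the identity $\|(\maxM(h^p))^{1/p}\|_q = \|\maxM(h^p)\|_{q/p}^{1/p}$ and $\|h^p\|_{q/p} = \|h\|_q^p$; the case $q = \infty$ is handled by the trivial bound $\maxM g \leq \|g\|_\infty$. So I would concentrate on \eqref{eq:pointwise_grand_maxk}.

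To prove the pointwise bound, fix $x \in X$, $r \in (0,1]$, and a cutoff $\phi \in \cutF_\gamma(x,r)$; the goal is to bound $|\int_X \phi f \wrt m|$ by $E(\maxM((\opK_* f)^p)(x))^{1/p}$. First I would establish the key \emph{decomposition lemma}: any $\gamma$-H\"older cutoff $\phi$ supported in $B(x_0,r)$ with $r \leq 1$ can be written as an infinite linear combination
\[
\phi = \sum_{j} c_j \, K(t_j, x_j, \cdot)
\]
with $\sum_j |c_j| m(B(x_j,t_j)) \leq C$ (or some comparable control adapted to the Ahlfors normalisation), where crucially \emph{all} the scales satisfy $t_j \leq r \leq 1$ and all the points $x_j$ lie in a bounded dilate of $B(x_0,r)$. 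This is the point where my argument departs from \cite{U}: whereas Uchiyama's decomposition of a cutoff at scale $r$ may recruit kernels $K(t_j,\cdot,\cdot)$ with $t_j$ arbitrarily large, here the lower bound \eqref{eq:nonvanishing} (that $K(t,x,y) \gtrsim t^{-D}$ on $d(x,y) \lesssim t$) lets one approximate $\phi$ from below at a \emph{fixed} small scale, absorb the error, which is again a H\"older cutoff (at a comparable or smaller scale, with a contraction factor $<1$ in its amplitude thanks to the H\"older/Lipschitz bounds \eqref{eq:Kupper}--\eqref{eq:Klip} and the geometric decay they provide), and iterate. Running a Neumann-type / telescoping iteration — at each stage subtracting off a suitable constant multiple of $K(t,x_j,\cdot)$ over a well-chosen net of points $x_j$ at scale $t \sim r$ — yields the series above with the scales staying $\leq r$. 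The summability of $\sum_j |c_j| m(B(x_j,t_j))$ comes from the geometric decay of the successive error terms.

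Granting the decomposition, I would estimate $\int \phi f \wrt m$ by
\[
\left| \int_X \phi f \wrt m \right| = \left| \sum_j c_j \, \opK_{t_j} f(x_j) \right| \leq \sum_j |c_j| \, \opK_* f(x_j) \leq \sum_j |c_j| \, m(B(x_j,t_j)) \cdot \frac{\opK_* f(x_j)}{m(B(x_j,t_j))}.
\]
Since $t_j \leq r \leq 1$ and the $x_j$ are within a fixed multiple of $B(x_0,r)$, each ball $B(x_j,t_j)$ is contained in a fixed ball $B(x_0, Cr)$, so $\opK_* f(x_j) \leq (\maxM((\opK_* f)^p)(x_0))^{1/p} \, m(B(x_0,Cr))^{1/p} \cdot (\text{comparison of volumes})$ — this is exactly the ``$L^p$-average dominates the value'' trick that forces $p < 1$: one writes $\opK_* f(x_j)^p \leq \int_{B(x_j,t_j)} \cdots$ only after noting $\opK_* f$ is not controlled pointwise but its $p$-th power is integrable against the maximal average. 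Concretely, one uses that for $p<1$ the quantity $\opK_* f(x_j)$ can be dominated by $(\dashint_{B(x_0,Cr)} (\opK_* f)^p \wrt m)^{1/p}$ up to volume factors, \emph{provided} one has enough decay in $|c_j| m(B(x_j,t_j))$ as $t_j \to 0$ to sum the resulting series — this is where the exponent $p$ must be taken small enough (depending on $\gamma$ and $D$) and why Uchiyama's theorem is stated for \emph{some} $p\in(0,1)$ rather than for all. Summing over $j$ and taking the supremum over $\phi$ and then over $r \in (0,1]$ gives \eqref{eq:pointwise_grand_maxk}.

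The main obstacle is the decomposition lemma itself, and specifically engineering the iteration so that the scales $t_j$ never exceed $r$. In \cite{U} one has the luxury of all scales available; here the global part of a H\"older cutoff cannot be captured by large-scale kernels, so one must verify that subtracting a fixed-small-scale approximation genuinely contracts the H\"older norm of the remainder (with a gain bounded away from $1$), and that the remainder remains supported in a controlled dilate of the original ball — the support spreading at each step must be summable. Balancing the contraction rate against the support spread, and then matching this against the choice of $p$ so that $\sum_j |c_j| m(B(x_j,t_j)) \cdot m(B(x_j,t_j))^{1/p-1}$ (the combination that appears after the $L^p$-averaging trick) converges, is the delicate bookkeeping at the core of the proof; everything else is a routine consequence of Ahlfors regularity and the defining bounds \eqref{eq:Kupper}--\eqref{eq:Klip}.
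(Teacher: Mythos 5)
Your overall strategy (decompose the cutoff into kernels at scales $\leq r$, then feed the decomposition into the maximal function) is the right one, but two of the specific claims your plan rests on are not salvageable as stated. First, the decomposition lemma you propose cannot hold in the form you need it: you want all basepoints $x_j$ to lie in a bounded dilate of $B(x_0,r)$ and the successive remainders to be (compactly supported) H\"older cutoffs on controlled dilates. But the kernels $K(t,x_j,\cdot)$ are \emph{not} compactly supported — they only decay like $(1+d(x_j,\cdot)/t)^{-D-\gamma}$ by \eqref{eq:Kupper} — so already after one subtraction the remainder has globally supported polynomial tails and is no longer a cutoff; iterating can never bring you back to compact support, and those tails cannot be corrected using only kernels centred near $B(x_0,r)$ at scales $\leq r$ (their mass near $B(x_0,r)$ and their mass far away are coupled). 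This is exactly why the paper's Proposition \ref{prp:phidecomp} uses basepoints $x_{ij}$ ranging over \emph{all} of $X$ (over balls of radius $\sim\eta^{-i}$ around the centre), with scales $\eta^{1+i}d(x_{ij})$ growing proportionally to the distance from the centre (but capped at $1$), coefficients weighted by $d(x_{ij})^{-\gamma/2}$, and an induction hypothesis of the form $|\phi_i(x)| \leq (1-\delta)^i d(x)^{-D-\gamma/2}$ — i.e.\ a globally decaying bound rather than compact support. Your intuition that the on-diagonal lower bound \eqref{eq:nonvanishing} gives a contraction factor $1-\delta$ is correct and is indeed the engine of the induction, but the bookkeeping must be done with decaying remainders, not compactly supported ones.

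Second, and independently, the step where you estimate $\sum_j |c_j|\,\opK_* f(x_j)$ is broken: you claim that $\opK_* f(x_j)$ "can be dominated by $(\dashint_{B(x_0,Cr)} (\opK_* f)^p \wrt m)^{1/p}$ up to volume factors", but a pointwise value is never dominated by an $L^p$ average ($p<1$ does not help here — $\opK_* f$ could be enormous precisely at $x_j$ and tiny elsewhere). This is the crux of Uchiyama's argument that your proposal omits: the decomposition must be built \emph{depending on $f$}. In the paper, the points $x_{ij}$ are selected via the covering Lemma \ref{lem:cover} so that, in addition to covering and finite overlap, they satisfy \eqref{eq:average_max}, i.e.\ $(\opK_* f(x_{ij}))^{1/2} \lesssim \dashint_{B_{ij}} (\opK_* f)^{1/2}\wrt m$ — one picks in each ball a point where the function is comparable to its average. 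Only then can the sum $\sum_{i,j}$ be controlled, and even so not trivially: the paper packages the coefficients into measures $\nu_k$ satisfying a Carleson-type condition and invokes Lemma \ref{lem:carleson} (the local analogue of Uchiyama's Lemma 1) to convert the sum of squared averages of $(\opK_* f)^{1/2}$ into $\maxM((\opK_* f)^{1/(1+\rho)})(o)^{1+\rho}$, which is where the exponent $p=1/(1+\rho)<1$ actually comes from (via $\eta^D<1-\delta$), not from a direct $L^p$-averaging trick. Without the $f$-dependent choice of points and the Carleson embedding, your chain of inequalities does not close. (Your reduction of \eqref{eq:lq_grand_maxk} to the pointwise bound \eqref{eq:pointwise_grand_maxk} is fine, as is the idea of working at scale $r$ directly — the paper instead proves the scale-$1$ case and rescales $d$, $m$, $K$.)
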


\begin{remark}
As will be clear from the proof, the constants $E$ and $p$ in Theorem \ref{thm:main} only depend on the parameters $A,D,\gamma,c$ in \eqref{eq:ahlfors} and Definition \ref{def: AI Ahlfors}, while $E_q$ only depends on those parameters and $q$. This fact will be crucial in the application of the above result in the following Section \ref{s:manifold_maximal}.
\end{remark}

As in \cite{U}, the key ingredient in the proof of this result is the following decomposition of an arbitrary $\ga$-H\"older 
cutoff $\phi$ supported in a ball of radius $1$ as a superposition of kernels $K(t,x,\cdot)$ at different times $t$ and basepoints $x$. The main difference with respect to the decomposition obtained in \cite[proof of Lemma 3$'$]{U} is that here we only use times $t \leq 1$.

\begin{proposition}\label{prp:phidecomp}
Let $K$ be an AI on $X$. There exist $\delta,\eta \in (0,1)$ and $\kappa,L \in (0,\infty)$ such that
\begin{equation}\label{eq:eta_delta_ineq}
\eta^D < 1-\delta
\end{equation}
and the following hold.
Let $o \in X$, and set $d(x) = 1+d(o,x)$ for all $x \in X$.
Let $f \in L^1_\loc(X)$ be such that $\opK_* f \in L^1_\loc(X)$.
Then, for all $i \in \NN$, there exist a finite index set $J(i)$ and, for all $j \in J(i)$, there exist $x_{ij} \in X$ such that, if $B_{ij} = B(x_{ij}, \eta^{1+i} d(x_{ij}))$, then
\begin{gather}
\eta^{i+1} d(x_{ij}) \leq 1, \label{eq:smalltimes}\\
(\opK_* f(x_{ij}))^{1/2} \leq L \, \dashint_{B_{ij}} (\opK_* f(y))^{1/2} \wrt m(y), \label{eq:average_max} \\
\sup_{x \in X} \sum_{j \in J(i)} \chr_{B_{ij}}(x)  \leq L. \label{eq:finite_overlapping}
\end{gather}
Moreover, for all $\phi \in \cutF_\gamma(o,1)$, there exist $\epsilon_{ij} \in \{-1,0,1\}$ for all $i \in \NN$ and $j \in J(i)$ such that,
for all $x \in X$,
\begin{equation}\label{eq:phidecomp}
\phi(x) = \kappa \sum_{i \in \NN} (1-\delta)^i \sum_{j \in J(i)} \epsilon_{ij} d(x_{ij})^{-\ga/2} \eta^{D(1+i)} K(\eta^{1+i} d(x_{ij}), x_{ij}, x).
\end{equation}
\end{proposition}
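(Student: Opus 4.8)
The plan is to adapt Uchiyama's iterative scheme, working only with small scales. Fix $o \in X$ and write $d(x) = 1 + d(o,x)$. The idea is to peel off, one "generation" at a time, a piece of $\phi$ that can be written as a finite linear combination of kernels $K(t_{ij}, x_{ij}, \cdot)$, leaving a remainder which is again (a constant multiple of) a $\gamma$-Hölder cutoff at a comparable or larger scale, but damped by a factor $1-\delta$. First I would set up the geometry: for each $i \in \mathbb{N}$, cover the region where the current remainder lives by a Vitali-type family of balls $B_{ij} = B(x_{ij}, \eta^{1+i} d(x_{ij}))$ of bounded overlap; the choice $\eta^{i+1} d(x_{ij}) \le 1$ in \eqref{eq:smalltimes} is arranged by only keeping balls whose radius does not exceed $1$ (for the finitely many $x_{ij}$ with $\eta^{1+i} d(x_{ij})$ large this simply means we don't subdivide further there, which is harmless because $\phi$ is supported in $B(o,1)$ and the remainders stay essentially supported near that ball). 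The bounded-overlap property \eqref{eq:finite_overlapping} and the choice of the $x_{ij}$ to (approximately) maximise $\opK_* f$ over each piece, giving \eqref{eq:average_max}, are standard selection arguments with constants depending only on $A, D, \gamma, c$.

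The core of the argument is the one-step decomposition. Given a $\gamma$-Hölder cutoff $\psi$ at scale $\eta^i$ supported near $B(o,1)$, I would use the nonvanishing estimate \eqref{eq:nonvanishing} (so $K(t, x, y) \ge c_1 t^{-D}$ for $d(x,y) \le c_2 t$) to show that a suitable partition-of-unity-weighted sum $\sum_j c_{ij} K(\eta^{1+i} d(x_{ij}), x_{ij}, \cdot)$ matches $\psi$ up to an error controlled in the Hölder norm at scale $\eta^{i+1}$ with a gain. Concretely: the kernels $K(\eta^{1+i} d(x_{ij}), x_{ij}, \cdot)$ are, on $B_{ij}$, bounded below by $c_1 (\eta^{1+i} d(x_{ij}))^{-D} \sim (\eta^{1+i})^{-D}$ (up to $d(x_{ij})$ factors absorbed into the coefficient, hence the $d(x_{ij})^{-\gamma/2}$ and $\eta^{D(1+i)}$ weights appearing in \eqref{eq:phidecomp}), and by \eqref{eq:Klip} they are Hölder at exponent $\gamma$ with the right scale; so one can solve, ball by ball, for a coefficient $\epsilon_{ij} \in \{-1,0,1\}$ times an explicit normalising factor so that the sum reproduces the average of $\psi$ on each $B_{ij}$. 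The difference between $\psi$ and this sum then has: (i) smaller sup norm, by a definite factor $1-\delta$, because on each ball we have subtracted off (a multiple of) the local average and $\psi$ oscillates by at most $\eta^{i\gamma} \cdot$(scale)$^{-D}$ — this is where \eqref{eq:eta_delta_ineq}, $\eta^D < 1-\delta$, is used to make the book-keeping of scales versus heights close; and (ii) it is again $\gamma$-Hölder at the finer scale $\eta^{i+1}$ after rescaling. Iterating and summing the geometric series $\sum_i (1-\delta)^i$ (which converges), with the $\kappa$ absorbing all the fixed constants, yields \eqref{eq:phidecomp}; the index sets $J(i)$ and points $x_{ij}$ depend only on $f$ and the fixed geometry, not on $\phi$, because the covering is chosen once and for all, while only the signs $\epsilon_{ij}$ depend on $\phi$.

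I expect the main obstacle to be the bookkeeping that simultaneously achieves all four requirements of a single step: matching $\psi$ closely enough to gain the factor $1-\delta$ in sup norm, keeping the remainder genuinely $\gamma$-Hölder at the finer scale (which forces one to use the \emph{stronger} decay $-D-2\gamma$ in \eqref{eq:Klip} rather than $-D-\gamma$, to control the tails of the far-away kernels when differencing), respecting the localisation (the remainder must stay supported, or almost supported, in a controlled enlargement of $B(o,1)$, so that it never sees scales larger than $1$), and ensuring the coefficients stay uniformly bounded so the final series converges. Uchiyama handles the analogous step in the global setting using kernels $K(t_j, x_j, \cdot)$ with $t_j$ possibly much larger than the scale of $\phi$; the novelty here is that, by choosing $\eta$ small, the decay \eqref{eq:Kupper}–\eqref{eq:Klip} at the \emph{smaller} scale $\eta^{1+i} d(x_{ij})$ is strong enough to absorb the contributions from neighbouring balls, so that only times $t_{ij} = \eta^{1+i} d(x_{ij}) \le 1$ are ever needed. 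Verifying that this smallness of $\eta$ is compatible with \eqref{eq:eta_delta_ineq} and with the overlap constants — i.e.\ that one can fix $\eta, \delta, \kappa, L$ once and for all — is the delicate quantitative point, but it reduces to choosing $\eta$ small after $\gamma, D, A, c$ are given, and then $\delta$ close to $1 - \eta^D$.
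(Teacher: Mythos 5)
There is a genuine gap at the heart of your one-step decomposition. You propose to ``solve, ball by ball, for a coefficient $\epsilon_{ij}\in\{-1,0,1\}$ times an explicit normalising factor so that the sum reproduces the average of $\psi$ on each $B_{ij}$''. This cannot work as stated: the statement \eqref{eq:phidecomp} forces every term to have the \emph{fixed} modulus $\kappa(1-\delta)^i d(x_{ij})^{-\gamma/2}\eta^{D(1+i)}$, with only a sign (or zero) at your disposal, and a three-valued coefficient of prescribed size cannot reproduce arbitrary local averages. Consequently your explanation of where the gain $1-\delta$ comes from (``on each ball we have subtracted off a multiple of the local average'') does not survive the constraint. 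The actual mechanism, which your proposal is missing, is a dichotomy combined with sign coherence: one always subtracts the full-size bump with sign $\epsilon_{ij}=\sgn\phi_i(x_{ij})$; where the remainder already satisfies $|\phi_i(x)|\le\tfrac12(1-\delta)^i d(x)^{-D-\gamma/2}$, the a priori smallness of the subtracted layer (enforced by taking $\kappa\delta$ small, cf.\ \eqref{eq:wi_est}) prevents overshoot; where instead $|\phi_i(x)|$ is within a factor $2$ of the allowed bound \eqref{eq:regime}, the accumulated H\"older estimates \eqref{eq:holderbdphii} force $\phi_i$ to keep its sign on $B(x,\sigma\eta^i d(x))$, so all nearby kernels enter with the correct sign and, via the covering property and the lower bound \eqref{eq:nonvanishing}, contribute at least a fixed multiple of $d(x)^{-D-\gamma/2}$, while the wrong-sign kernels are centred at distance $\ge\sigma\eta^i d(x)$ and their total contribution is $O(\eta^\gamma)\,d(x)^{-D-\gamma/2}$ by a summation lemma (Lemma \ref{lem:summing}), absorbed by taking $\eta$ small. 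Without this argument the factor $1-\delta$ cannot be extracted under the rigid form of the coefficients.

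Two further points where your scheme would break. First, the inductive hypothesis ``the remainder is again a $\gamma$-H\"older cutoff at scale $\eta^{i+1}$, (almost) supported in a controlled enlargement of $B(o,1)$'' is untenable: the kernels have only the polynomial decay \eqref{eq:Kupper}, so each subtraction spreads mass over all of $X$; what one can and must propagate is the \emph{weighted} global bound $|\phi_i(x)|\le(1-\delta)^i d(x)^{-D-\gamma/2}$, and this is precisely why the coefficients carry the factor $d(x_{ij})^{-\gamma/2}$ and why the balls $B_{ij}$ must cover the whole region $\{\eta^{1+i}d(\cdot)\le 1/2\}$ rather than a neighbourhood of $B(o,1)$ (the far region being handled by an estimate of the type \eqref{eq:phi_far}). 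Second, your parameter selection is inconsistent with the mechanism: choosing ``$\delta$ close to $1-\eta^D$'' makes $\delta$ close to $1$, whereas the argument requires $\delta$ \emph{small} (so that the subtracted bumps never overshoot and the accumulated Lipschitz sums converge); once $\delta$ and $\eta$ are small, \eqref{eq:eta_delta_ineq} is automatic, and its real role is not in the sup-norm gain at all but in guaranteeing a Carleson-type exponent $\rho=\log(1-\delta)/\log(\eta^D)\in(0,1)$ in the subsequent Corollary \ref{cor:est_phi_maxk}.
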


We postpone the proof of this decomposition to Section \ref{ss:proof_dec}. Let us first show how to derive the main result from this decomposition.
To this purpose the following lemma, which is a simple adaptation of \cite[Lemma 1]{U}, will be useful.

\begin{lemma}\label{lem:carleson}
Let $\rho \in [0,\infty)$ and $q \in (1,\infty)$. There exists $C_{q,\rho} \in (0,\infty)$ such that the following hold.
Let $\nu$ be a nonnegative measure on $X \times (0,\infty)$ such that, for all $x \in X$ and $r \in (0,\infty)$,
\[
\nu(B(x,r) \times (0,r)) \leq r^{D(1+\rho)}.
\]
Then, for all $f \in L^q(X)$,
\[
\left(\int_{X \times (0,\infty)} \left| \dashint_{B(x,r)} f(y) \wrt m(y) \right|^{q(1+\rho)} \wrt \nu(x,r)\right)^{1/(q(1+\rho))} \leq C_{q,\rho} \|f\|_{q}.
\]
\end{lemma}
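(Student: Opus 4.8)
The plan is to recognize Lemma \ref{lem:carleson} as a Carleson-measure estimate on the upper half-space $X \times (0,\infty)$, and to prove it by interpolating between an $L^\infty$ bound and a weak-type endpoint bound, following Uchiyama's \cite[Lemma 1]{U}. First I would set up notation: write $\widehat{f}(x,r) = \dashint_{B(x,r)} f \wrt m$ for the "averaging lift" of $f$ to $X \times (0,\infty)$, and observe that the desired inequality is
\[
\|\widehat f\|_{L^{q(1+\rho)}(\nu)} \leq C_{q,\rho} \|f\|_{L^q(m)}.
\]
The case $\rho = 0$ reduces (after noting $|\widehat f(x,r)| \le \maxM f(x)$) to the classical fact that a Carleson measure with the linear growth $\nu(B(x,r)\times(0,r)) \le r^D$ controls the $L^q$ norm of the non-tangential/maximal lift; so the real content is the gain of the exponent from $q$ to $q(1+\rho)$ when $\rho > 0$.

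The key step is a good-$\lambda$ / distributional argument for the "tent" measure. For $\lambda > 0$, let $\Omega_\lambda = \{x \in X : \maxM f(x) > \lambda\}$; since $\maxM$ is weak type $(1,1)$ (and, more relevantly here, since we will ultimately want the $L^q$ bound, also strong type $(q,q)$), $\Omega_\lambda$ is open and can be covered efficiently. The point is that $\{(x,r) : |\widehat f(x,r)| > \lambda\}$ is contained in a union of Carleson boxes $B(x_k, r_k) \times (0, r_k)$ with $\bigcup_k B(x_k,r_k) \subseteq \Omega_{c\lambda}$ and $\sum_k r_k^D \lesssim m(\Omega_{c\lambda})$ — this uses Ahlfors regularity \eqref{eq:ahlfors} to pass between radii and measures of balls, together with a Vitali-type covering of $\Omega_{c\lambda}$. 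Applying the hypothesis $\nu(B(x_k,r_k)\times(0,r_k)) \le r_k^{D(1+\rho)} \lesssim (r_k^D)^{1+\rho}$ and summing (using $\sum a_k^{1+\rho} \le (\sum a_k)^{1+\rho}$ for nonnegative $a_k$ since $1+\rho \ge 1$), one gets
\[
\nu(\{|\widehat f| > \lambda\}) \lesssim m(\Omega_{c\lambda})^{1+\rho}.
\]
Then integrating in $\lambda$ via the layer-cake formula,
\[
\int_{X\times(0,\infty)} |\widehat f|^{q(1+\rho)} \wrt\nu = q(1+\rho)\int_0^\infty \lambda^{q(1+\rho)-1} \nu(\{|\widehat f|>\lambda\}) \wrt\lambda \lesssim \int_0^\infty \lambda^{q(1+\rho)-1} m(\Omega_{c\lambda})^{1+\rho}\wrt\lambda,
\]
and one recognizes the right-hand side, after the change of variable and Hölder's inequality in $\lambda$ against the measure $\lambda^{q-1}\wrt\lambda$ restricted appropriately (or more directly by the identity $\int_0^\infty \lambda^{q(1+\rho)-1} m(\Omega_{c\lambda})^{1+\rho} \wrt\lambda \lesssim \sup_\lambda \big(\lambda^q m(\Omega_{c\lambda})\big)^{\rho}\cdot \int_0^\infty \lambda^{q-1}m(\Omega_{c\lambda})\wrt\lambda$), as being controlled by $\|\maxM f\|_q^{q(1+\rho)} \lesssim \|f\|_q^{q(1+\rho)}$, using the strong $(q,q)$ bound for $\maxM$ (valid since $q > 1$). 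Taking $q(1+\rho)$-th roots gives the claim, with $C_{q,\rho}$ depending only on $q$, $\rho$, and the Ahlfors constants — hence, as required, only on $D$ and $A$ beyond $q,\rho$.

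The main obstacle I expect is the covering step: producing the family of Carleson boxes $\{B(x_k,r_k)\times(0,r_k)\}$ that both covers the superlevel set $\{|\widehat f|>\lambda\}$ in the half-space and projects into $\Omega_{c\lambda}$ with $\sum_k r_k^D \lesssim m(\Omega_{c\lambda})$. This is where Ahlfors regularity is genuinely used (to convert $\sum m(B(x_k,r_k))$ bounds into $\sum r_k^D$ bounds with the right constants) and where one must be careful to invoke a Vitali-type covering lemma valid in general metric measure spaces — the doubling property coming for free from \eqref{eq:ahlfors}. A secondary, purely bookkeeping, subtlety is the final $\lambda$-integration: one must make sure the elementary inequality $\sum_k a_k^{1+\rho} \le \big(\sum_k a_k\big)^{1+\rho}$ is applied correctly and that the resulting estimate in $\lambda$ is integrable, which forces the use of the $L^q$ (rather than weak-$L^1$) boundedness of $\maxM$ and hence the restriction $q>1$ in the statement. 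None of these steps is deep, but getting the constants to depend only on the allowed parameters requires a little care, and it is exactly this uniformity that matters for the application in Section \ref{s:manifold_maximal}.
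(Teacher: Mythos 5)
Your proposal is correct and is essentially the same argument as the one the paper relies on: the paper gives no proof of Lemma \ref{lem:carleson} and simply cites \cite[Lemma 1]{U}, whose proof is exactly this distributional/Carleson-box scheme (inclusion of $\{| \dashint_{B(x,r)} f\,\wrt m| > \lambda\}$ in boxes over $\{\maxM f > c\lambda\}$ via Ahlfors regularity and a Vitali covering, the elementary inequality $\sum_k a_k^{1+\rho} \leq \bigl(\sum_k a_k\bigr)^{1+\rho}$, and the layer-cake integration combined with the weak and strong $(q,q)$ bounds for $\maxM$). The only cosmetic point is that you do not actually need openness of $\{\maxM f > c\lambda\}$ (which can fail for the centred maximal function): the Vitali covering is applied to the family of balls $B(x,r)$ with $(x,r)$ in the superlevel set, all contained in a set of finite measure, so the argument goes through as you outlined.
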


By combining Proposition \ref{prp:phidecomp} and Lemma \ref{lem:carleson}, we obtain the following crucial majorisation.

\begin{corollary}\label{cor:est_phi_maxk}
Let $K$ be an AI on $X$. There exist $E \in (1,\infty)$ and $p \in (0,1)$ such that the following hold. Let $o \in X$ and $\phi \in \cutF_\gamma(o,1)$. Then, for all $f \in L^1_\loc(X)$,
\begin{equation}\label{eq:est_phi_maxk}
\left| \int \phi(x) f(x) \wrt m(x) \right| \leq E (\maxM((\opK_* f)^{p})(o))^{1/p}.
\end{equation}
\end{corollary}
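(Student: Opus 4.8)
The plan is to derive Corollary~\ref{cor:est_phi_maxk} directly by substituting the decomposition \eqref{eq:phidecomp} of $\phi$ from Proposition~\ref{prp:phidecomp} into the pairing $\int \phi f \wrt m$ and estimating the resulting double sum using the Carleson-type inequality of Lemma~\ref{lem:carleson}. First I would fix $o \in X$ and $\phi \in \cutF_\gamma(o,1)$, and use the decomposition to write
\[
\left|\int \phi f \wrt m\right| \leq \kappa \sum_{i \in \NN} (1-\delta)^i \sum_{j \in J(i)} d(x_{ij})^{-\ga/2} \eta^{D(1+i)} \left| \opK_{\eta^{1+i} d(x_{ij})} f(x_{ij}) \right|.
\]
By \eqref{eq:smalltimes} the time $\eta^{1+i} d(x_{ij})$ lies in $(0,1]$, so each term is bounded by $\opK_* f(x_{ij})$; moreover $\eta^{D(1+i)} d(x_{ij})^D = m$-comparable to $m(B_{ij})/A$ by Ahlfors regularity \eqref{eq:ahlfors}, so $\eta^{D(1+i)} = O(m(B_{ij}) d(x_{ij})^{-D})$. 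Using \eqref{eq:average_max} to replace $\opK_* f(x_{ij})$ by its average over $B_{ij}$, and writing $g = (\opK_* f)^{1/2}$, each inner term is controlled by $d(x_{ij})^{-D-\ga/2} m(B_{ij}) \bigl( \dashint_{B_{ij}} g \wrt m \bigr)^2$.

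The second step is to recognise the resulting sum over $(i,j)$ as an integral against a discrete measure and apply Lemma~\ref{lem:carleson}. Concretely I would set up $\nu$ to be (a constant multiple of) the sum of point masses at $(x_{ij}, \eta^{1+i} d(x_{ij})) \in X \times (0,\infty)$ with appropriate weights, chosen so that the target quantity $\sum_{i,j} (1-\delta)^i d(x_{ij})^{-D-\ga/2} m(B_{ij}) (\dashint_{B_{ij}} g)^2$ matches the left-hand side of Lemma~\ref{lem:carleson} with $q=2$ and $\rho$ small but positive. The key is to verify the Carleson condition $\nu(B(x,r) \times (0,r)) \leq r^{D(1+\rho)}$: a point mass from level $i$ contributes only if $\eta^{1+i} d(x_{ij}) < r$ and $d(x, x_{ij}) < r$, and since $B(x,r)$ has $m$-measure $\asymp r^D$ and the balls $\{B_{ij}\}_{j \in J(i)}$ have bounded overlap \eqref{eq:finite_overlapping} with radii $\asymp \eta^{1+i} d(x_{ij})$, one controls how many such $B_{ij}$ fit inside (a dilate of) $B(x,r)$ for each $i$; the geometric factors $\eta^{i}$ together with the extra decay from $(1-\delta)^i$ and the condition $\eta^D < 1 - \delta$ from \eqref{eq:eta_delta_ineq} must be distributed to produce both the summability in $i$ and the power $r^{D(1+\rho)}$. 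This is where the inequality \eqref{eq:eta_delta_ineq} enters crucially: it gives a small $\rho > 0$ such that $(1-\delta) \eta^{D\rho} < 1$, or equivalently leaves a geometric gain after extracting $r^{D\rho}$.

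Once Lemma~\ref{lem:carleson} applies, it yields $\left|\int \phi f\right| \leq C \|g\|_{2(1+\rho)}^{2} = C \|(\opK_* f)^{1/2}\|_{2(1+\rho)}^2 = C \|\opK_* f\|_{1+\rho}$. To pass from this $L^{1+\rho}$ bound to the pointwise estimate \eqref{eq:est_phi_maxk} centred at $o$, I would run the same argument but restrict all integrations and averages to a controlled neighbourhood of $o$: since $\phi$ is supported in $B(o,1)$ and the decomposition only involves points $x_{ij}$ with $\eta^{1+i} d(x_{ij}) \leq 1$, all the relevant balls $B_{ij}$ lie within a fixed dilate of $B(o,1)$, so one may replace $f$ by $f \chr_{B(o,C_0)}$ throughout without changing the pairing; then the global $L^{1+\rho}$ estimate on this truncated function reads $\|\opK_*(f\chr_{B(o,C_0)})\|_{1+\rho}^{1+\rho} = \int_X (\opK_* f)^{1+\rho} \chr_{\cdots}$, which after accounting for $m(B(o,C_0)) \asymp 1$ is at most $C \, m(B(o,C_0)) \dashint_{B(o,C_0)} (\opK_* f)^{1+\rho} \leq C \maxM((\opK_* f)^{1+\rho})(o)$. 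Setting $p = 1/(1+\rho) \in (0,1)$, raising to the power $1/(1+\rho)$, and noting $(\opK_* f)^{1+\rho} = ((\opK_* f)^p)^{1/p^2}$... — more carefully, one takes $p \in (0,1)$ with $1/p = 1+\rho$, so $\|\opK_* f\|_{1+\rho} = \|\opK_* f\|_{1/p}$, and $\maxM((\opK_* f)^p)(o)^{1/p}$ dominates exactly this localised quantity by the definition of $\maxM$. I expect the main obstacle to be the bookkeeping in verifying the Carleson measure condition with the correct exponent $\rho$ — in particular tracking how the factors $(1-\delta)^i$, $\eta^{D(1+i)}$, the bounded-overlap constant, and the varying scales $d(x_{ij})$ combine, and ensuring the choice of $\rho$ is compatible simultaneously with \eqref{eq:eta_delta_ineq} and with $q(1+\rho) > 1$ in Lemma~\ref{lem:carleson}.
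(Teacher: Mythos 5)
Your overall skeleton (substitute the decomposition \eqref{eq:phidecomp}, use \eqref{eq:average_max}, set up a discrete Carleson measure, apply Lemma \ref{lem:carleson}, conclude with $\maxM$ at $o$) is the same as the paper's, but two steps as you describe them would fail.

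First, the exponent bookkeeping in the Carleson step is wrong. The squared averages $\bigl(\dashint_{B_{ij}}(\opK_*f)^{1/2}\bigr)^2$ force $q(1+\rho)=2$ in Lemma \ref{lem:carleson}, i.e.\ $q=2/(1+\rho)$ (admissible because $\rho<1$, which is exactly what \eqref{eq:eta_delta_ineq} guarantees; note the hypothesis of the lemma is $q>1$, not ``$q(1+\rho)>1$''). The resulting right-hand side is then $\|(\opK_*f)^{1/2}\|_{2/(1+\rho)}^2=\bigl(\int(\opK_*f)^{1/(1+\rho)}\bigr)^{1+\rho}$, an $L^p$-type quantity with $p=1/(1+\rho)<1$ — not your claimed $\|g\|_{2(1+\rho)}^2=\|\opK_*f\|_{1+\rho}$, which would neither come from your choice $q=2$ (that gives $\|g\|_2$) nor be usable: for $p<1$, Jensen gives $\bigl(\dashint h^{p}\bigr)^{1/p}\leq\dashint h\leq\bigl(\dashint h^{1+\rho}\bigr)^{1/(1+\rho)}$, so $\maxM((\opK_*f)^p)(o)^{1/p}$ does \emph{not} dominate a normalised $L^{1+\rho}$ average — your last step goes the wrong way, and $p<1$ is essential for the $q=1$ case of \eqref{eq:lq_grand_maxk}. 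Relatedly, your defining relation ``$(1-\delta)\eta^{D\rho}<1$'' is vacuously true for every $\rho>0$; what the Carleson count needs is $1-\delta\leq\eta^{D\rho}$, e.g.\ $\rho=\log(1-\delta)/\log(\eta^D)$, with $\rho<1$ coming from \eqref{eq:eta_delta_ineq}.

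Second, the localisation is not as you claim. The decomposition only imposes $\eta^{1+i}d(x_{ij})\leq 1$, so $d(x_{ij})$ can be as large as $\eta^{-1-i}$: the points $x_{ij}$ and balls $B_{ij}$ are \emph{not} contained in a fixed dilate of $B(o,1)$. Moreover you may not replace $f$ by $f\chr_{B(o,C_0)}$: the corollary is about $\opK_*f$, and $\opK_*(f\chr_{B(o,C_0)})$ is not pointwise controlled by $\opK_*f$ (your identity $\|\opK_*(f\chr_{B(o,C_0)})\|_{1+\rho}^{1+\rho}=\int(\opK_*f)^{1+\rho}\chr_{\cdots}$ conflates the two). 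The paper handles this by splitting the Carleson measure into pieces $\nu_k$ supported where $2^k\leq d(x_{ij})<2^{k+1}$: then every ball $B(x,r)$ with $(x,r)\in\supp\nu_k$ lies in $B(o,2^{k+2})$, so the cutoff is inserted into the averaged function $g=(\opK_*f)^{1/2}$ rather than into $f$; the Carleson bound $\nu_k(B(x,r)\times(0,r))\leq C(2^{-k}r)^{D(1+\rho)}$ turns the $L^{2/(1+\rho)}$ norm of $g\chr_{B(o,2^{k+2})}$ into an average over $B(o,2^{k+2})$, hence into $\maxM((\opK_*f)^{p})(o)^{1/p}$; and the factor $d(x_{ij})^{-\gamma/2}\leq 2^{-k\gamma/2}$ — which your scheme never uses — provides the summability over $k$. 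Without this dyadic mechanism for the far-away points your argument does not close.
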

\begin{proof}
By Proposition \ref{prp:phidecomp}, we can decompose $\phi$ as in \eqref{eq:phidecomp}. Consequently, by \eqref{eq:average_max},
\begin{equation}\label{eq:first_decomp}
\begin{split}
&\left| \int \phi(x) f(x) \wrt m(x) \right| \\
&\leq \kappa \sum_{i \in \NN,\, j \in J(i)} (1-\delta)^i d(x_{ij})^{-\ga/2} \eta^{D(1+i)} (\opK_* f)(x_{ij}) \\
&\leq \kappa L^{2} \sum_{i \in \NN,\, j \in J(i)}  (1-\delta)^i d(x_{ij})^{-\ga/2} \eta^{D(1+i)} \left( \dashint_{B_{ij}} ((\opK_* f)(y))^{1/2} \wrt m(y) \right)^2 \\
&\leq \frac{\kappa L^{2}}{1-\delta} \sum_{k \in \NN} 2^{-k\ga/2}  \int_{X \times (0,\infty)} \left( \dashint_{B(x,r)} ((\opK_* f)(y))^{1/2} \wrt m(y) \right)^2 \wrt\nu_k(x,r),
\end{split}
\end{equation}
where, for all $k \in \NN$, the measure $\nu_k$ on $X \times (0,\infty)$ is defined  by
\[
\nu_k = \sum_{\substack{i \in \NN,\, j \in J(i)\\ 2^k \leq d(x_{ij}) < 2^{k+1}}} (1-\delta)^{1+i} \eta^{D(1+i)} \delta_{(x_{ij},\eta^{1+i} d(x_{ij}))}
\]
and $\delta_{(x,r)}$ denotes the Dirac measure at $(x,r) \in X \times (0,\infty)$.

Note now that, for all $x \in X$ and $r \in (0,\infty)$,
\[\begin{split}
\nu_k(B(x,r) \times (0,r)) &= \sum_{\substack{i \in \NN,\, j \in J(i) \\ 2^k \leq d(x_{ij}) < 2^{k+1} \\ x_{ij} \in B(x,r), \, \eta^{1+i} d(x_{ij}) < r }} (1-\delta)^{1+i} \eta^{D(1+i)} \\
&\leq C \sum_{\substack{i \in \NN \\ \eta^{1+i} 2^k < r}} \left( \frac{r}{\eta^{1+i} 2^k} \right)^D (1-\delta)^{1+i} \eta^{D(1+i)} \\
&\leq C (2^{-k} r)^{D(1 + \rho)},
\end{split}\]
where $\rho = \log(1-\delta)/\log (\eta^D) \in (0,1)$ by \eqref{eq:eta_delta_ineq}; in the middle inequality, we used the Ahlfors condition \eqref{eq:ahlfors} and the finite overlapping property \eqref{eq:finite_overlapping} to control, for every $i \in \NN$, the number of $j \in J(i)$ such that $2^k \leq d(x_{ij}) < 2^{k+1}$, $x_{ij} \in B(x,r)$, $\eta^{1+i} d(x_{ij}) < r$ by a multiple of $( r / (\eta^{1+i} 2^k) )^D$.

Note also that, if $(x,r) \in \supp \nu_k$, then $x \in B(o,2^{k+1})$ and $r\leq 1$, so $B(x,r) \subseteq B(o,2^{k+2})$. We can then apply Lemma \ref{lem:carleson} with $q = 2/(1+\rho)$ and \eqref{eq:ahlfors} to obtain that
\[
\begin{split}
&\int_{X \times (0,\infty)} \left( \dashint_{B(x,r)} ((\opK_* f)(y))^{1/2} \wrt m(y) \right)^2 \wrt\nu_k(x,r) \\
&= \int_{X \times (0,\infty)} \left( \dashint_{B(x,r)} ((\opK_* f)(y))^{1/2} \chr_{B(o,2^{k+2})}(y) \,d m(y) \right)^2 \wrt\nu_k(x,r) \\
& \leq C 2^{-kD(1+\rho)} \| (\opK_* f)^{1/2} \chr_{B(o,2^{k+2})} \|_{q}^2 \\
& = C 2^{-kD(1+\rho)} \left(\int_{B(o,2^{k+2})} (\opK_* f)^{1/(1+\rho)} \right)^{1+\rho}  \\
& \leq C \left(\dashint_{B(o,2^{k+2})} (\opK_* f)^{1/(1+\rho)} \right)^{1+\rho} \\
& \leq C (\maxM ((\opK_* f)^{1/(1+\rho)})(o))^{1+\rho},
\end{split}
\]
which, together with \eqref{eq:first_decomp}, gives the desired estimate with $p = 1/(1+\rho)$.
\end{proof}

We can now prove the main result of this section.

\begin{proof}[Proof of Theorem \ref{thm:main}]
Let us first observe that the estimate \eqref{eq:est_phi_maxk} actually holds (with the same constants) for all $\phi \in \cutF_\gamma(o,r)$ and $r \in (0,1]$. Indeed, it is sufficient to apply Corollary \ref{cor:est_phi_maxk} to the rescaled metric $d_r$, measure $m_r$ and kernel $K_r$ given by
\[
d_r = d/r, \qquad m_r= m/r^D, \qquad K_r(t,x,y) = r^D K(rt,x,y),
\]
which satisfy the same assumptions as $d$, $m$, $K$ (with the same constants). The pointwise estimate \eqref{eq:pointwise_grand_maxk} then follows by taking the supremum for $r \in (0,1]$ and $\phi \in \cutF_\gamma(o,r)$, for arbitrary $o \in X$. This estimate, together with the boundedness of the Hardy--Littlewood maximal function $\maxM$ on $L^s(X)$ for $s \in (1,\infty]$, immediately gives \eqref{eq:lq_grand_maxk}.
\end{proof}

\subsection{Proof of the decomposition}\label{ss:proof_dec}

Here we prove the crucial Proposition \ref{prp:phidecomp}.
From now on we think of the AI $K$ and the point $o \in X$ as fixed. As in the statement of Proposition \ref{prp:phidecomp}, we define $d(x) = 1+ d(o,x)$ for all $x \in X$.

\begin{lemma}\label{lem:equiv_dist}
For all $x,y \in X$, if $d(x,y) \leq d(y)/2$, then $d(y)/2 \leq d(x) \leq 2d(y)$. Moreover, if $d(x,y) < d(y)/2$, then $d(y)/2 < d(x) < 2d(y)$.
\end{lemma}
\begin{proof}
Immediate from the triangle inequality.
\end{proof}

\begin{lemma}\label{lem:cover}
For all $a \in (0,1]$, there exists $C_a \in (0,\infty)$ such that the following hold. Let $t \in (0,1/2]$ and let $g \in L^1_\loc(X)$ be nonnegative. Then there exists a finite collection $\{x_j\}_j$ of points of $X$ such that
\begin{gather}
t d(x_j) \leq 1, \label{eq:txjbd} \\
\sum_j \chr_{B(x_j, td(x_j))}(x) \leq C_a \qquad\text{for all } x \in X, \label{eq:fin_overlap} \\
\sum_j \chr_{B(x_j, a td(x_j))}(x) \geq 1 \qquad\text{whenever } td(x) \leq 1/2, \label{eq:cover} \\
g(x_j) \leq C_a \, \dashint_{B(x_j, td(x_j))} g(y) \wrt m(y). \label{eq:average_lbd}
\end{gather}
\end{lemma}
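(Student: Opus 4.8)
\textbf{Proof plan for Lemma \ref{lem:cover}.}

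The plan is to build the points $\{x_j\}_j$ by a Vitali-type selection argument adapted to the slowly varying "radius function" $r(x) = t\,d(x)$, and then to verify the four conclusions one by one. First I would restrict attention to the set $\Omega = \{x \in X : t\,d(x) \leq 1/2\}$, which is exactly the region where the covering property \eqref{eq:cover} is required; note that $\Omega$ is bounded, since $t\,d(x) \leq 1/2$ forces $d(o,x) \leq 1/(2t) - 1 < \infty$, and that on $\Omega$ one has $r(x) = t\,d(x) \leq 1/2 \leq 1$, which will give \eqref{eq:txjbd}. On the balls $B(x, a t d(x))$ with $x \in \Omega$, apply the basic $5r$-covering (Vitali) lemma for the family $\{B(x, a t d(x)/5) : x \in \Omega\}$ — using boundedness of $\Omega$ and Ahlfors regularity to guarantee a finite disjointified subfamily — to extract finitely many centres $x_j \in \Omega$ such that the balls $B(x_j, a t d(x_j)/5)$ are pairwise disjoint while the dilated balls $B(x_j, a t d(x_j))$ cover $\Omega$. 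This immediately yields \eqref{eq:cover}. Along the way I would throw away any redundant $x_j$, and one may additionally enlarge the selected family finitely to make an averaging inequality hold (see below), but the skeleton is this Vitali selection.

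The key technical input is Lemma \ref{lem:equiv_dist}: whenever two of the relevant balls overlap, the values $d(x_j)$ at their centres are comparable up to a factor of $2$ (one checks that $B(x_j, t d(x_j))$ and $B(x_k, t d(x_k))$ meeting forces, say, $d(x_j, x_k) \leq t d(x_j) + t d(x_k) \leq \tfrac12(d(x_j)+d(x_k))$, and then a short computation gives $d(x_j) \asymp d(x_k)$; the same with the constant $a \leq 1$ in place of $1$). For the finite-overlap bound \eqref{eq:fin_overlap}: fix $x \in X$ and let $\mathcal J(x) = \{j : x \in B(x_j, t d(x_j))\}$. All $x_j$ with $j \in \mathcal J(x)$ lie within distance $t d(x_j)$ of $x$, hence (by Lemma \ref{lem:equiv_dist}, since $t d(x_j) \le \tfrac12 d(x_j)$) satisfy $d(x_j) \asymp d(x)$, so all the balls $B(x_j, a t d(x_j)/5)$, $j \in \mathcal J(x)$, are contained in a fixed ball $B(x, C t d(x))$ and each has measure $\gtrsim (a t d(x))^D$ by Ahlfors regularity \eqref{eq:ahlfors}; since these balls are disjoint, counting volumes gives $\#\mathcal J(x) \leq C_a$. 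This is the standard bounded-overlap argument, the only subtlety being the replacement of a constant radius by the equivalent quantity $t d(x_j)$, which Lemma \ref{lem:equiv_dist} handles.

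It remains to arrange the averaging inequality \eqref{eq:average_lbd}, namely $g(x_j) \leq C_a\, \dashint_{B(x_j, t d(x_j))} g$. The point is that the $x_j$ produced by a naive Vitali argument are arbitrary points, at which $g$ need not be controlled by its local average, so I would modify the selection: rather than covering $\Omega$ by balls centred at arbitrary points, first pass to the subset $\Omega' = \{x \in \Omega : g(x) \leq C_a\, \dashint_{B(x, t d(x))} g\}$ (with $C_a$ chosen below) and run the Vitali selection inside $\Omega'$; then \eqref{eq:average_lbd} holds for free, and the only thing to recheck is that the dilated balls centred in $\Omega'$ still cover all of $\Omega$. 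This follows from a Hardy--Littlewood-type argument: the "bad" set $\Omega \setminus \Omega'$, where $g(x) > C_a\, \dashint_{B(x, t d(x))} g$, has, for $C_a$ large, small measure inside any ball — more precisely, using the weak $(1,1)$ bound for the local maximal function $x \mapsto \sup_{x \in B,\, r_B \le 1} \dashint_B g$ together with the bounded-overlap geometry, one shows that the bad set cannot "fill up" any ball $B(y, a t d(y)/5)$ with $y \in \Omega$, hence every such ball contains a point of $\Omega'$, and a point $x_j \in \Omega'$ selected near $y$ has $B(x_j, a t d(x_j)) \ni y$ by the equivalence of radii. I expect this last step — coupling the covering requirement \eqref{eq:cover} with the averaging requirement \eqref{eq:average_lbd}, i.e.\ showing that restricting the centres to the "good" set $\Omega'$ does not destroy the covering — to be the main obstacle; everything else is a routine Vitali/bounded-overlap computation once Lemma \ref{lem:equiv_dist} is in hand. (This is precisely the mechanism behind \cite[Lemma 1]{U}, of which the present lemma is the stated local adaptation.)
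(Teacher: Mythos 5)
Your route is workable but genuinely different in mechanism from the paper's proof. The paper takes a finite family $\{y_j\}$ in $X_t=\{y \in X : td(y)\le 1/2\}$ that is \emph{maximal} for the relative separation $d(y_j,y_k)\ge at\min\{d(y_j),d(y_k)\}/4$: maximality immediately gives the covering \eqref{eq:cover}, the separation plus \eqref{eq:ahlfors} gives \eqref{eq:fin_overlap}, and --- this is the step your plan replaces --- the averaging condition is obtained \emph{a posteriori} by moving each $y_j$ to a nearby point $x_j\in B(y_j,atd(y_j)/4)$ at which $g(x_j)\le 2\dashint_{B(y_j,atd(y_j)/4)}g$ (such a point exists by Chebyshev); since $B(y_j,atd(y_j)/4)\subseteq B(x_j,td(x_j))$ and these two balls have comparable measure by \eqref{eq:ahlfors} and Lemma \ref{lem:equiv_dist}, this yields \eqref{eq:average_lbd} with no density argument for a ``good set'' at all. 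Your plan fixes the good set first and runs the Vitali selection inside it, so you must prove the density claim you identify as the main obstacle; it is true, but the natural tool is not the weak $(1,1)$ bound for a maximal operator, just Chebyshev again: for every $x\in B:=B(y,atd(y)/5)$ one has $B\subseteq B(x,td(x))$ and $m(B(x,td(x)))\le C a^{-D} m(B)$, so the set of $x\in B$ with $g(x)>C_a\dashint_{B(x,td(x))}g$ has measure at most $C a^{-D}C_a^{-1}m(B)<m(B)$ once $C_a$ is chosen large, and good points in $B$ exist (the degenerate case $\int_B g=0$ being trivial). The paper's perturbation trick is shorter; your version needs this extra step but is otherwise a standard variable-radius Vitali argument, as you say.

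Two loose ends in your write-up, both fixable. First, the final covering claim does not follow as stated: if $x\in\Omega'\cap B(y,atd(y)/5)$ and the $5r$-lemma gives $B(x,atd(x)/5)\subseteq B(x_j,atd(x_j))$, then, using $d(y)\le 2d(x)\le 4d(x_j)$ from Lemma \ref{lem:equiv_dist}, you only get $d(y,x_j)\le atd(y)/5+atd(x_j)\le \tfrac{9}{5}\,atd(x_j)$, not $d(y,x_j)\le atd(x_j)$; since the lemma is quantified over all $a\in(0,1]$ and $C_a$ may depend on $a$, simply run the whole construction with selection parameter $a/2$ (or shrink the factor $1/5$). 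Second, restricting the good set to $\Omega=\{td\le 1/2\}$ is too tight: a good point $x$ found in $B(y,atd(y)/5)$ only satisfies $td(x)\le \tfrac{11}{20}$, so it may lie outside $\Omega$, and in a general (possibly non-geodesic) Ahlfors space you cannot guarantee good points of $\Omega$ itself inside that ball; define the good set inside $\{x: td(x)\le 1\}$ instead --- conclusion \eqref{eq:txjbd} only requires $td(x_j)\le 1$, and indeed the paper's own points $x_j$ satisfy only this weaker bound --- and your argument closes.
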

\begin{proof}
Let $\{y_j\}_j$ be a collection of points of $X_t = \{ y \in X \tc td(y) \leq 1/2\}$ maximal with respect to the condition
\begin{equation}\label{eq:min_distance}
d(y_j,y_k) \geq a t \min\{d(y_j),d(y_k)\}/4 \qquad\text{for all distinct } j,k.
\end{equation}
Finiteness of the collection is an immediate consequence of \eqref{eq:ahlfors}. Moreover, by maximality,
\begin{equation}\label{eq:close_points}
\text{for all $x \in X_t$, there exists $j$ such that $d(x,y_j) < a t \min\{d(x),d(y_j)\}/4$.}
\end{equation}

For each $j$, choose now $x_j \in B(y_j,a td(y_j)/4)$ such that
\begin{equation}\label{eq:average_y}
g(x_j) \leq 2 \, \dashint_{B(y_j,a td(y_j)/4)} g(y) \wrt m(y).
\end{equation}
By Lemma \ref{lem:equiv_dist},
\begin{equation}\label{eq:comp_dist_xy}
d(x_j)/2 \leq d(y_j) \leq 2d(x_j);
\end{equation}
consequently \eqref{eq:txjbd} holds, and moreover $B(y_j,a td(y_j)/4) \subseteq B(x_j,td(x_j))$, so \eqref{eq:average_lbd} follows by \eqref{eq:average_y} and \eqref{eq:ahlfors}.

Similarly, for all $x \in X_t$, by \eqref{eq:close_points} there exists $j$ such that
\[
d(x,x_j) < a td(y_j)/4 + a td(y_j)/4 \leq a td(x_j),
\]
which implies \eqref{eq:cover}.

Finally, for all $x \in X$, if $x \in B(x_j,td(x_j))$, then, again by Lemma \ref{lem:equiv_dist} and \eqref{eq:comp_dist_xy},
\[
d(y_j)/4 \leq d(x) \leq 4d(y_j),
\]
whence $y_j \in B(x,3td(x))$; moreover, by \eqref{eq:min_distance} such points $y_j$ are at least at distance $a td(x)/16$ from each other, and therefore \eqref{eq:fin_overlap} follows from \eqref{eq:ahlfors}.
\end{proof}

\begin{lemma}\label{lem:summing}
Let $L \in (0,\infty)$ and $a,b \in [0,\infty)$ be such that $b \geq a$. Then there exists $C_{a,b,L} \in (0,\infty)$ such that the following hold.
Let $t \in (0,1)$ and let $\{x_j\}_j$ be a collection of points of $X$ such that
\begin{equation}\label{eq:fin_overlap_lemma}
\sup_{x \in X} \sum_j \chr_{B(x_j,td(x_j))}(x) \leq L.
\end{equation}
Then, for all $x \in X$ and $h \in [0,\infty)$,
\begin{multline}\label{eq:sumest_main}
\sum_{j \tc d(x_j,x) \geq h td(x_j)} d(x_j)^{-D-a} (1+d(x_j,x)/(td(x_j)))^{-D-b} \\
\leq C_{a,b,L}   \, d(x)^{-D-a} \max\{t^b,(1+h)^{-b}\}.
\end{multline}
In addition, if $td(x) \geq 2$, then
\begin{equation}\label{eq:sumest_tail}
\sum_{j \tc td(x_j) \leq 1} d(x_j)^{-D-a} (1+d(x_j,x)/(td(x_j)))^{-D-b} 
\leq C_{a,b,L}   \, d(x)^{-D-b} t^a.
\end{equation}
\end{lemma}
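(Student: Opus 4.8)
The plan is to prove both \eqref{eq:sumest_main} and \eqref{eq:sumest_tail} by a double dyadic decomposition of the index set: I would group the $j$'s according to the size of $d(x_j)$ relative to $d(x)$ and, within each group, according to the size of $1+d(x_j,x)/(td(x_j))$. To bound the \emph{number} of $j$'s in each dyadic block, the only tool needed is the following consequence of \eqref{eq:fin_overlap_lemma} and the Ahlfors condition \eqref{eq:ahlfors}: since $t<1$, for $j$ with $d(x_j)\in[2^k,2^{k+1})$ the ball $B(x_j,td(x_j))$ has measure $\gtrsim(t2^k)^D$ and lies in $B(o,2^{k+2})$, so the $L$-overlap bound forces the number of such $j$ to be $\lesssim t^{-D}$; more generally, the number of $j$ with $d(x_j)\in[2^k,2^{k+1})$ and $x_j\in B(z,R)$ with $R\ge t2^k$ is $\lesssim(R/(t2^k))^D$. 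Write $\tau_j:=d(x_j)^{-D-a}(1+d(x_j,x)/(td(x_j)))^{-D-b}$ for the $j$-th summand; I will use repeatedly that $d(x)\ge 1$.

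For \eqref{eq:sumest_main} I would split the $j$'s into three regimes. \emph{Far regime}, $d(x_j)>4d(x)$: the triangle inequality yields $\tfrac{3}{4}d(x_j)<d(x_j,x)<\tfrac{5}{4}d(x_j)$, so $1+d(x_j,x)/(td(x_j))\gtrsim 1/t$ and hence $\tau_j\lesssim t^{D+b}d(x_j)^{-D-a}$; summing $d(x_j)^{-D-a}$ over the dyadic shells above $d(x)$ ($\lesssim t^{-D}$ indices per shell, convergent geometric series since $D+a>0$) gives a contribution $\lesssim t^{D+b}\cdot t^{-D}\,d(x)^{-D-a}=t^b d(x)^{-D-a}$. \emph{Comparable regime}, $d(x)/4\le d(x_j)\le 4d(x)$: here $d(x_j)\approx d(x)$, and decomposing dyadically in $1+d(x_j,x)/(td(x))$, the block $d(x_j,x)\approx 2^m td(x)$ carries $\lesssim 2^{mD}$ indices while $d(x_j,x)\ge htd(x_j)$ forces $2^m\gtrsim 1+h$, so summing the geometric series in $m$ (convergent because $b>0$) yields a contribution $\lesssim(1+h)^{-b}d(x)^{-D-a}$. \emph{Near-origin regime}, $d(x_j)<d(x)/4$: now the triangle inequality forces $d(x_j,x)\approx d(x)$ while $td(x_j)<d(x)/4$, so $1+d(x_j,x)/(td(x_j))\approx d(x)/(td(x_j))$ and thus $\tau_j\approx t^{D+b}d(x_j)^{b-a}d(x)^{-D-b}$; since $d(x_j,x)\ge htd(x_j)$ bounds $d(x_j)\lesssim d(x)/(th)$, summing over dyadic shells $d(x_j)\approx 2^k$ (again $\lesssim t^{-D}$ indices each) and using $b\ge a$ to handle the geometric series $\sum_k 2^{k(b-a)}$ (dominated by its top term $\approx(d(x)/\max\{4,th\})^{b-a}$ when $b>a$) gives a contribution $\lesssim t^b\max\{4,th\}^{a-b}d(x)^{-D-a}\le t^b d(x)^{-D-a}$. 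Adding the three bounds — the far and near-origin regimes supplying the $t^b$, the comparable regime the $(1+h)^{-b}$ — gives \eqref{eq:sumest_main}.

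For \eqref{eq:sumest_tail}, the hypotheses $td(x)\ge 2$ and $td(x_j)\le 1$ force $d(x_j)\le 1/t\le d(x)/2$, so every index falls (essentially) into the near-origin regime; the same computation, with the shell sum now truncated at $d(x_j)\approx 1/t$, gives for $b>a$ that $\sum_k t^{-D}2^{k(b-a)}\lesssim t^{-D}(1/t)^{b-a}$, whence the total is $\lesssim t^{D+b}\cdot t^{-D}t^{a-b}\cdot d(x)^{-D-b}=t^a d(x)^{-D-b}$.

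The hard part is the near-origin regime (and with it \eqref{eq:sumest_tail}): those indices are as far from $x$ as possible, so the angular factor $(1+d(x_j,x)/(td(x_j)))^{-D-b}$ is already saturated at its minimum $\approx t^{D+b}$ and all remaining decay must come from $d(x_j)^{b-a}$ summed over dyadic shells. This is exactly where $b\ge a$ is essential — it forces that series to be dominated by its top term rather than incurring a logarithmic loss — and it is where one must track \emph{both} constraints ($d(x_j)\ll d(x)$ and $d(x_j,x)\ge htd(x_j)$) in order to recover the sharp factors $\max\{t^b,(1+h)^{-b}\}$ and $t^a$. The borderline case $b=a$ needs a little more care: there the shell sum is of size $\approx\#\{\text{shells}\}$, which is harmless because in the applications the collection satisfies $td(x_j)\le 1$, bounding the number of shells by $\lesssim\log(1/t)$ (and $t^a\log(1/t)$ is bounded on $(0,1)$ for $a>0$); the fully degenerate case $a=b=0$ does not occur.
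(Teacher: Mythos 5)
Your proposal is correct and follows essentially the same route as the paper's proof: dyadic shells in $d(x_j)$, the counting bound $\lesssim (R/(t2^k))^D$ per shell coming from \eqref{eq:fin_overlap_lemma} and \eqref{eq:ahlfors}, and the same trichotomy according to the size of $d(x_j)$ relative to $d(x)$ (the paper merely replaces your second dyadic decomposition in $d(x_j,x)$ by smearing the factor $(1+d(x_j,x)/(td(x_j)))^{-D-b}$ into an average over $B(x_j,td(x_j))$ and integrating over $X$, which is the same estimate computed slightly differently). The borderline case $b=a$ that you flag is equally delicate in the paper's own argument (its summation of \eqref{eq:est_xlarge} over shells incurs the same logarithmic loss), and the lemma is only ever applied with $b>a$, so your treatment is on par with the paper's.
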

\begin{proof}
Note that, by the triangle inequality,
\[\begin{split}
(1+d(x_j,x)/(td(x_j)))^{-D-b} &\leq C \inf_{y \in B(x_j,td(x_j))} (1+d(y,x)/(td(x_j)))^{-D-b} \\
&\leq C \dashint_{B(x_j,td(x_j))} (1+d(y,x)/(td(x_j)))^{-D-b} \wrt m(y),
\end{split}\]
where $C$ may depend on $b$.
Let $k \in \NN$. If $2^k \leq d(x_j) < 2^{k+1}$, then, by \eqref{eq:ahlfors},
\begin{multline*}
d(x_j)^{-D-a} (1+d(x_j,x)/(td(x_j)))^{-D-b} \\
\leq C 2^{-k(D+a)} (t2^k)^{-D} \int_{B(x_j,td(x_j))} (1+d(y,x)/(t2^k))^{-D-b} \wrt m(y).
\end{multline*}
Hence, if $\tilde h = (h-1)_+$, then, by \eqref{eq:fin_overlap_lemma} and \eqref{eq:ahlfors},
\begin{equation}\label{eq:est_main}
\begin{split}
&\sum_{\substack{j \tc d(x_j,x) \geq h td(x_j) \\ 2^k \leq d(x_j) < 2^{k+1}}} d(x_j)^{-D-a} (1+d(x_j,x)/(td(x_j)))^{-D-b} \\
&\leq C 2^{-k(D+a)} (t2^k)^{-D} \int_X (1+d(y,x)/(t2^k))^{-D-b} \chr_{[\tilde h,\infty)}(d(y,x)/t2^k) \wrt m(y) \\
&\leq C 2^{-k(D+a)} \sum_{\ell \geq 0 \tc 2^\ell \geq \tilde h} 2^{D\ell} 2^{-\ell(D+b)} \\
&\leq C 2^{-k(D+a)} (1+h)^{-b},
\end{split}
\end{equation}
where $C$ may depend on $b$ and $L$.

Let us also remark that, by \eqref{eq:fin_overlap_lemma} and \eqref{eq:ahlfors}, the number of $j$ such that $2^k \leq d(x_j) < 2^{k+1}$ is bounded by a multiple of $t^{-D}$. Hence, if $d(x) < 2^{k-1}$, then $d(x_j,x) > d(x_j)/2$ by Lemma \ref{lem:equiv_dist} and
\begin{equation}\label{eq:est_xsmall}
\sum_{\substack{j \tc 2^k \leq d(x_j) < 2^{k+1}}} d(x_j)^{-D-a} (1+d(x_j,x)/(td(x_j)))^{-D-b} 
\leq C 2^{-k(D+a)} t^{b};
\end{equation}
if instead $2^{k+2} < d(x)$, then $d(x_j,x) > d(x)/2$ and
\begin{equation}\label{eq:est_xlarge}
\sum_{\substack{j \tc 2^k \leq d(x_j) < 2^{k+1}}} d(x_j)^{-D-a} (1+d(x_j,x)/(td(x_j)))^{-D-b} 
\leq C 2^{k(b-a)} t^{b} d(x)^{-D-b}.
\end{equation}

Summing over $k \in \NN$ by exploiting the estimates \eqref{eq:est_main}, \eqref{eq:est_xsmall} and \eqref{eq:est_xlarge} immediately gives \eqref{eq:sumest_main}. On the other hand, if $td(x) \geq 2$ and $td(x_j) \leq 1$, then $2d(x_j) \leq d(x)$ and $d(x,x_j) \geq d(x)/2$ by Lemma \ref{lem:equiv_dist}, so \eqref{eq:est_xlarge} applies; however in this case the sum is restricted to $2^{k} \leq t^{-1}$, which leads to \eqref{eq:sumest_tail}.
\end{proof}

\begin{proof}[Proof of Proposition \ref{prp:phidecomp}]
Let $\delta,\eta \in (0,1)$ and $\kappa \in (1,\infty)$ be constants to be fixed later. We will see throughout the proof what constraints are needed on $\delta,\eta,\kappa$.

Assume that
\begin{equation}\label{eq:eta_firstassumption}
\eta \leq 1/2.
\end{equation}
Let $c_2$ be the constant in \eqref{eq:nonvanishing}.
For all $i \in \NN$, by applying Lemma \ref{lem:cover} with $t = \eta^{1+i}$, $a = c_2$, and $g = (\opK_* f)^{1/2}$, we construct a finite family $\{x_{ij}\}_{j \in J(i)}$ of points of $X$ satisfying
\begin{gather}
\sup_{x \in X} \sum_{j \in J(i)} \chr_{ B(x_{ij},\eta^{1+i} d(x_{ij}))}(x) \leq L, \qquad\text{for all } x \in X, \label{eq:fin_overlap_ij} \\
\sum_{j \in J(i)} \chi_{ B(x_{ij},c_2\eta^{1+i} d(x_{ij}))}(x) \geq 1 \qquad\text{whenever } \eta^{1+i} d(x) \leq 1/2,  \label{eq:covering_enough} \\
\eta^{i+1} d(x_{ij}) \leq 1, \label{eq:max_dist} \\
(\opK_* f(x_{ij}))^{1/2} \leq L \, \dashint_{B(x_{ij},\eta^{1+i} d(x_{ij}))} (\opK_* f(y))^{1/2} \wrt m(y), \label{eq:avg_bd} 
\end{gather}
for all $j \in J(i)$, where $L \in (0,\infty)$ is independent of $f$, $\eta$ and $i$.
In particular, \eqref{eq:smalltimes}, \eqref{eq:average_max} and \eqref{eq:finite_overlapping} are certainly satisfied.

\bigskip

Let $\phi \in \cutF_\gamma(o,1)$. Up to rescaling it is not restrictive to assume that
\begin{equation}\label{eq:phi_uniformbound}
\|\phi\|_\infty \leq 2^{-D-\ga/2}.
\end{equation}
Let us now define recursively, for all $i \in \NN$, the function $\phi_i : X \to \RR$ by setting $\phi_0 = \phi$ and $\phi_{i+1} = \phi_i - w_i$, where
\[
w_i(x) = \kappa \delta (1-\delta)^i \sum_{j \in J(i)} \epsilon_{ij} d(x_{ij})^{-\ga/2} \eta^{D(1+i)} K(\eta^{1+i} d(x_{ij}), x_{ij}, x)
\]
and $\epsilon_{ij} = \sgn \phi_i(x_{ij})$.
We now want to prove, for all $i \in \NN$, that
\begin{equation}\label{eq:phis_bound1} 
|\phi_i(x)| \leq (1-\delta)^i d(x)^{-D-\ga/2} \qquad\text{for all } x \in X.
\end{equation}
Clearly this implies that $\phi_i \to 0$ locally uniformly as $i\to \infty$, and consequently the representation \eqref{eq:phidecomp} holds, provided we relabel $\kappa\delta$ as $\kappa$.

We will prove \eqref{eq:phis_bound1} by induction on $i$. 
Note that, because of \eqref{eq:phi_uniformbound}, the estimate \eqref{eq:phis_bound1} trivially holds for $i=0$. 
Before entering the proof of the induction step, we discuss a number of useful estimates.
\bigskip

Let us first obtain a few ``a priori'' estimates for the functions $w_i$ (that do not depend on the choices of the signs $\epsilon_{ij}$).
Let $i \in \NN$. By \eqref{eq:Kupper}, \eqref{eq:fin_overlap_ij} and Lemma \ref{lem:summing}, for all $x \in X$,
\[\begin{split}
|w_i(x)| 
&\leq \kappa \delta (1-\delta)^i \sum_{j \in J(i)} d(x_{ij})^{-\ga/2} \eta^{D(1+i)} K(\eta^{1+i} d(x_{ij}), x_{ij}, x) \\
&\leq \kappa \delta (1-\delta)^i \sum_{j \in J(i)} d(x_{ij})^{-D-\ga/2} (1+d(x_{ij},x)/(\eta^{1+i} d(x_{ij})))^{-D-\ga} \\
&\leq C_{\ga/2,\ga,L} \, \kappa \delta (1-\delta)^i d(x)^{-D-\ga/2};
\end{split}\]
moreover, if $\eta^{i+1} d(x) \geq 2$, then 
\[
|w_i(x)| \leq C_{\ga/2,\ga,L} \, \kappa \delta (1-\delta)^i d(x)^{-D-\ga} (\eta^{1+i})^{\ga/2}.
\]
Hence, if we assume that
\begin{equation}\label{cond1}
C_{\ga/2,\ga,L} \, \kappa \delta \leq 1/4,
\end{equation}
then
\begin{equation}\label{eq:wi_est}
|w_i(x)| \leq \begin{cases}
\frac{1}{4} (1-\delta)^i d(x)^{-D-\ga/2} &\text{for all } x \in X, \\
\frac{1}{4} \eta^{\ga/2} ((1-\delta) \eta^{\ga/2})^i d(x)^{-D-\ga} &\text{if } \eta^{1+i} d(x) \geq 2.
\end{cases}
\end{equation}

Similarly, for all $x,y \in X$ such that $d(x,y) \leq \eta^{1+i} d(x)/4$, by the triangle inequality we deduce that, for all $j \in J(i)$,
\[
d(x,y) \leq (\eta^{1+i} d(x_{ij}) + d(x,x_{ij}))/4,
\]
hence, by \eqref{eq:Klip} and Lemma \ref{lem:summing},
\[\begin{split}
&|w_i(x) - w_i(y)| \\
&\leq \kappa \delta (1-\delta)^i \sum_{j \in J(i)} d(x_{ij})^{-\ga/2} \eta^{D(1+i)} |K(\eta^{1+i} d(x_{ij}), x_{ij}, x) - K(\eta^{1+i} d(x_{ij}), x_{ij}, y)| \\
&\leq \kappa \delta (1-\delta)^i (d(x,y)/\eta^{1+i})^\ga \sum_{j \in J(i)} d(x_{ij})^{-D-3\ga/2}  (1+d(x_{ij}, x)/(\eta^{1+i} d(x_{ij})))^{-D-2\ga} \\
&\leq C_{3\ga/2,2\ga,L} \frac{\kappa \delta}{1-\delta} ((1-\delta)/\eta^\ga)^{i+1} d(x,y)^\ga d(x)^{-D-3\ga/2}.
\end{split}\]
Hence, if we assume that
\begin{equation}\label{cond2}
C_{3\ga/2,2\ga,L} \, \kappa \delta \leq 1/4, \quad 1-\delta \geq 3/4,
\end{equation}
then, provided $d(x,y) \leq \eta^{1+i} d(x)/4$,
\begin{equation}\label{eq:wi_lip}
|w_i(x) - w_i(y)| \leq 
\frac{1}{3}  ((1-\delta)/\eta^\ga)^{i+1} d(x,y)^\ga d(x)^{-D-3\ga/2} .
\end{equation}

\bigskip

By summing the estimates \eqref{eq:wi_est}, we can immediately obtain the validity of a stronger estimate than \eqref{eq:phis_bound1} for $x$ in a suitable region (depending on $i$).

Indeed, note that, if $d(x) \geq 2$, then $d(x,o) \geq 1$ and therefore $\phi(x) = 0$. Consequently, if we assume that
\begin{equation}\label{cond3}
 \eta^{\ga/2} \leq 1/2,
\end{equation}
then, for all $i \in \NN$, by \eqref{eq:wi_est},
\begin{equation}\label{eq:phi_far}
\begin{split}
|\phi_i(x)| &\leq \sum_{s=0}^{i-1} |w_s(x)| \\
&\leq \frac{1}{4} \eta^{\ga/2} d(x)^{-D-\ga} \sum_{s=0}^{i-1}((1-\delta) \eta^{\ga/2})^s \\
&\leq \frac{1}{2} \eta^{\ga/2} d(x)^{-D-\ga} \\
&\leq \frac{1}{2^{1+\ga/2}} (\eta^{\ga/2})^{1+i} d(x)^{-D-\ga/2} \\
&\leq \frac{1}{4} (1-\delta)^i d(x)^{-D-\ga/2},
\end{split}
\end{equation}
where we used that $\eta^{\ga/2} \leq 1-\delta$ by \eqref{cond2} and \eqref{cond3}.
This shows that \eqref{eq:phis_bound1} is trivially satisfied whenever $\eta^i d(x) \geq 2$.

\bigskip

By summing the estimates \eqref{eq:wi_lip}, we can also derive, for all $i \in \NN$, an useful estimate for the difference of the values of $\phi_i$ at different points.

Observe first that, since $\phi \in \cutF_\gamma(o,1)$, for all $x,y\in X$ such that $d(x,y) \leq d(x)/2$, by Lemma \ref{lem:equiv_dist} and the support condition, the difference $|\phi(x) - \phi(y)|$ vanishes unless $d(x) \leq 4$, so
\[
|\phi(x) - \phi(y)| \leq 4^{D+3\ga/2} d(x,y)^\ga d(x)^{-D-3\ga/2}.
\]
Hence, by \eqref{eq:wi_lip}, if $d(x,y) \leq \eta^i d(x)/4$, then
\[\begin{split}
|\phi_i(x) - \phi_i(y)| &\leq |\phi(x) - \phi(y)| + \frac{1}{3} d(x,y)^\ga d(x)^{-D-3\ga/2} \sum_{s=0}^{i-1} ((1-\delta)/\eta^\ga)^{s+1} \\
&\leq (4^{D+3\ga/2} + 2/3) (1-\delta)^i d(x)^{-D-\ga/2} (d(x,y)/(\eta^i d(x)))^\ga ,
\end{split}\]
provided
\begin{equation}\label{cond5}
 1-\delta \geq 2 \eta^\ga.
\end{equation}
Consequently, if we define
\[
\sigma = \min\{1/4, (2(4^{D+3\ga/2} + 2/3))^{-1/\ga}\},
\]
then, for all $x,y\in X$ such that $d(x,y) \leq \sigma \eta^i d(x)$,
\begin{equation}\label{eq:holderbdphii}
|\phi_i(x) - \phi_i(y)| \leq \frac{1}{2} (1-\delta)^i d(x)^{-D-\ga/2}
\end{equation}

\bigskip

We now proceed with the proof of the inductive step; i.e., for a given $i \in \NN$, we assume the validity of \eqref{eq:phis_bound1} for $\phi_i$ and prove the same estimate for $\phi_{i+1}$.

Let $x \in X$. Consider first the case where
\[
|\phi_i(x)| \leq \frac{1}{2} (1-\delta)^i d(x)^{-D-\ga/2},
\]
In this case, since $3/4 \leq 1-\delta$ by \eqref{cond2},
from \eqref{eq:wi_est} we deduce that
\[\begin{split}
|\phi_{i+1}(x)| &\leq |\phi_i(x)| + |w_i(x)| \\
&\leq (3/4) (1-\delta)^i d(x)^{-D-\ga/2}  \\
&\leq (1-\delta)^{i+1} d(x)^{-D-\ga/2},
\end{split}\]
and we are done.

Hence, to prove \eqref{eq:phis_bound1} for $\phi_{i+1}$, it remains to consider the case where
\begin{equation}\label{eq:regime}
\frac{1}{2} (1-\delta)^i d(x)^{-D-\ga/2} < |\phi_i(x)| \leq (1-\delta)^i d(x)^{-D-\ga/2}.
\end{equation}
On the other hand, if we assume that $\eta^{i+1} d(x) \geq 1/2$, then
\[
\eta^i d(x) \geq \eta^{-1}/2 \geq 2,
\]
provided
\begin{equation}\label{cond7}
 \eta \leq 1/4,
\end{equation}
whence, by \eqref{eq:phi_far}, $|\phi_i(x)| \leq \frac{1}{4} (1-\delta)^i d(x)^{-D-\ga/2}$. So, being in the regime \eqref{eq:regime} implies that
\begin{equation}\label{eq:dist_bnd}
\eta^{i+1} d(x) < 1/2.
\end{equation}

From \eqref{eq:holderbdphii} and \eqref{eq:regime} we deduce that, for all $y \in X$, if $d(x,y) \leq \sigma \eta^i d(x)$, then
\begin{equation}\label{eq:equalsign}
\sgn \phi_i(y) = \sgn \phi_i(x).
\end{equation}
Set $\epsilon = \sgn \phi_i(x)$; then
\[\begin{split}
\epsilon w_i(x)
&= \kappa \delta (1-\delta)^i \sum_{j \tc \epsilon \phi_i(x_{ij}) > 0} d(x_{ij})^{-\ga/2} \eta^{D(1+i)} K(\eta^{1+i} d(x_{ij}), x_{ij}, x) \\
&\quad - \kappa \delta (1-\delta)^i \sum_{j \tc \epsilon \phi_i(x_{ij}) < 0} d(x_{ij})^{-\ga/2} \eta^{D(1+i)} K(\eta^{1+i} d(x_{ij}), x_{ij}, x) \\
&\geq \kappa \delta (1-\delta)^i \sum_{j \in J(i)} d(x_{ij})^{-\ga/2} \eta^{D(1+i)} K(\eta^{1+i} d(x_{ij}), x_{ij}, x) \\
&\quad - 2\kappa \delta (1-\delta)^i \sum_{j \tc \epsilon \phi_i(x_{ij}) \leq 0} d(x_{ij})^{-\ga/2} \eta^{D(1+i)} K(\eta^{1+i} d(x_{ij}), x_{ij}, x).
\end{split}\]
Note now that, by \eqref{eq:equalsign}, \eqref{eq:Kupper} and Lemma \ref{lem:summing},
\[\begin{split}
&\sum_{j \tc \epsilon \phi_i(x_{ij}) \leq 0} d(x_{ij})^{-\ga/2} \eta^{D(1+i)} K(\eta^{1+i} d(x_{ij}), x_{ij}, x) \\
&\leq \sum_{j \tc d(x,x_{ij}) > \sigma \eta^i d(x)} d(x_{ij})^{-D-\ga/2} (1+d(x_{ij},x)/(\eta^{1+i} d(x_{ij})))^{-D-\ga}  \\
&\leq C_{\ga/2,\ga,L}\, \sigma^{-\ga} \eta^{\ga} d(x)^{-D-\ga/2}.
\end{split}\]
On the other hand, since $K$ is nonnegative, by \eqref{eq:nonvanishing}, Lemma \ref{lem:equiv_dist}, \eqref{eq:dist_bnd}, and \eqref{eq:covering_enough}, 
\[\begin{split}
&\sum_{j \in J(i)} d(x_{ij})^{-\ga/2} \eta^{D(1+i)} K(\eta^{1+i} d(x_{ij}), x_{ij}, x) \\
&\geq \sum_{j \in J(i)} d(x_{ij})^{-\ga/2} \eta^{D(1+i)} K(\eta^{1+i} d(x_{ij}), x_{ij}, x) \chr_{B(x_{ij},c_2\eta^{1+i} d(x_{ij}))}(x) \\
&\geq c_1 2^{-D-\ga/2} \, d(x)^{-D-\ga/2}.
\end{split}\]
Hence, if we assume that
\begin{equation}\label{cond8}
\eta^\ga \leq \frac{c_1 2^{-D-\ga/2}}{4 C_{\ga/2,\ga,L} \, \sigma^{-\ga}}, \qquad \kappa = (c_1 2^{-1-D-\ga/2})^{-1}
\end{equation}
then
\[
\epsilon w_i(x)
\geq \delta (1-\delta)^i d(x)^{-D-\ga/2}
\]
and consequently, by \eqref{eq:wi_est} and \eqref{eq:regime},
\[\begin{split}
|\phi_{i+1}(x)| &= \epsilon \phi_{i+1}(x) \\
&\leq (1-\delta)^i d(x)^{-D-\ga/2} - \delta (1-\delta)^i d(x)^{-D-\ga/2} \\
&= (1-\delta)^{i+1} d(x)^{-D-\ga/2},
\end{split}\]
which concludes the proof of the inductive step.

By looking at all the above conditions, one sees that they are satisfied if we first fix the value of $\kappa$ as in \eqref{cond8}, then we choose $\delta$ sufficiently small that \eqref{cond1} and \eqref{cond2} are satisfied, and finally we choose the value of $\eta$ sufficiently small that all conditions \eqref{eq:eta_firstassumption}, \eqref{cond3}, \eqref{cond5}, \eqref{cond7}, \eqref{cond8} and \eqref{eq:eta_delta_ineq} are satisfied.
\end{proof}

\section{Maximal characterisation of $\ghu(M)$ via approximations of the identity}\label{s:manifold_maximal}

We now return to the setting of a Riemannian manifold $M$ satisfying the assumptions \ref{en:ass_inj}-\ref{en:ass_ric} of 
Section~\ref{s: Background material}.  
In this section we shall prove a maximal characterisation of the atomic local Hardy space $\ghu(M)$ 
in terms of a local maximal function associated to a single ``approximation of the identity", 
in the sense defined below.

\begin{definition}\label{def: LAI}
Let $\gamma \in (0,1]$ and $\la > 0$. A \emph{$\lambda$-local approximation of the identity} ($\la$-LAI in the sequel) of exponent $\gamma$ on $M$ is a measurable function 
$K:(0,1]\times M\times M\to [0,\infty)$ for which there exist positive constants
$C_1$, $C_2$ and $C_3$ such that for every $t\in (0,1]$, $x,y,z\in M$, with
\begin{equation}\label{eq:LAI_range}
4d(y,z)\leq t+d(x,y),
\end{equation}
the following hold:
\begin{enumerate}[label=(\roman*)]
\item\label{en:lai_supp} $K(t,x,y)=0$ if $d(x,y)>\la$;
\item\label{en:lai_upper} $K(t,x,y)\leq C_1\, t^{-n} (1+d(x,y)/t)^{-n-\gamma}$;
\item\label{en:lai_nondeg} $K(t,x,x)\geq C_2 \, t^{-n}$;
\item\label{en:lai_lip} $|K(t,x,y)-K(t,x,z)|
	\leq C_3 \, t^{-n}\, (d(y,z)/t)^{\gamma} (1+d(x,y)/t)^{-n-2\gamma}$.
\end{enumerate}
We denote by $\aiV_\gamma$ the collection of all LAI of exponent $\gamma$ on $M$.
\end{definition}

\begin{definition}\label{def: AI}
Let $\gamma \in (0,1]$. An \emph{approximation of the identity on $M$} (AI on $M$ in the sequel) of exponent $\gamma$ is a measurable function 
$K:(0,1]\times M\times M\rightarrow \CC$ which can be written as $K = K_1+K_2$, where $K_1$ is in $\aiV_\gamma$ and
\begin{equation}\label{eq:ai_l1bound}
\esssup_{y \in M} \int_M \sup_{t \in (0,1]} |K_2(t,x,y)| \wrt\mu(x) < \infty.
\end{equation}
We denote by $\taiV_\gamma$ the collection of all AI of exponent $\gamma$ on $M$. 
\end{definition}

\begin{remark}
Definition \ref{def: LAI} is analogous to Definition \ref{def: AI Ahlfors}, but includes the additional constraint for $K(t,\cdot,\cdot)$ to be supported in a $t$-independent neighbourhood of the diagonal. Definition \ref{def: AI} provides a relaxation of the support constraint, which is very convenient in applications. As a matter or fact, in the case $M$ is globally $n$-Ahlfors, one can show that any kernel $K$ satisfying the bounds \ref{en:lai_upper} to \ref{en:lai_lip} of Definition \ref{def: LAI} is actually an AI in the sense of Definition \ref{def: AI}. In these respects, Definition \ref{def: AI} can be considered as an appropriate extension of Definition \ref{def: AI Ahlfors} that applies also to spaces that are locally, but not globally Ahlfors.
\end{remark}

In this section, the exponent $\gamma \in (0,1]$ will be thought of as fixed, and we will simply write ``$\lambda$-LAI'' in place of ``$\lambda$-LAI of exponent $\gamma$''. We will also write $\aiV$ and $\taiV$ in place of $\aiV_\gamma$ and $\taiV_\gamma$.

\begin{remark}\label{rem:LAI_diff}
In case $K(t,x,y)$ is continuously differentiable in $y$, condition \ref{en:lai_lip} in Definition \ref{def: LAI} is implied by the differential condition
\begin{enumerate}[label=(\roman*'),start=4]
\item $\mod{\nabla_y K(t,x,y)}
	\leq C_4 \, t^{-n-1}  (1+d(x,y)/t)^{-n-1-\gamma}$.
\end{enumerate}
Indeed, by the fundamental theorem of calculus,
\[
|K(t,x,y) - K(t,x,z)| \leq d(y,z) \sup_{w \in \Gamma(y,z)} |\nabla_y K(t,x,w)|
\]
where $\Gamma(y,z)$ is a length-minimising arc on $M$ joining $y$ to $z$; since $d(x,w) \geq d(x,y)-d(y,w) \geq d(x,y) - d(y,z)$ for all $w \in \Gamma(y,z)$, in the range \eqref{eq:LAI_range} we deduce that $1+d(x,w)/t \geq (3/4) (1+d(x,y)/t)$ and
\[\begin{split}
|K(t,x,y) - K(t,x,z)| 
&\leq C_4 (4/3)^{n+1+\gamma} t^{-n} (d(y,z)/t) (1+d(x,y)/t)^{-n-1-\gamma} \\
&\leq C_3 t^{-n} (d(y,z)/t)^{\gamma} (1+d(x,y)/t)^{-n-2\gamma}
\end{split}\]
where $C_3 = (4/3)^{n+1+\gamma} (1/4)^{1-\gamma} C_4$. 
\end{remark}

Clearly the restriction to the range \eqref{eq:LAI_range} in Definition \ref{def: LAI} is only relevant for condition \ref{en:lai_lip}. As a matter of fact, the constant $4$ in the range \eqref{eq:LAI_range} could be replaced with any other constant greater than $1$ without changing the class of $\lambda$-LAI, as shown by the following lemma and its proof.

\begin{lemma}\label{lem:lai_lip_range}
Let $\kappa > 0$. Let $K : (0,1] \times M \times M \to [0,\infty)$ satisfy the conditions \ref{en:lai_upper} and \ref{en:lai_lip} of Definition \ref{def: LAI} for some constants $C_1,C_3>0$ and all $t \in (0,1]$ and $x,y,z \in M$ in the range
\[
\kappa d(y,z)\leq t+\min\{ d(x,y), d(x,z)\}.
\]
Then $K$ also satisfies condition \ref{en:lai_lip} in the range \eqref{eq:LAI_range},
with a constant $C_3'$ (in place of $C_3$) only depending on $\kappa,C_1,C_3$.
\end{lemma}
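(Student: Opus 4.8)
The plan is to split the range \eqref{eq:LAI_range} according to whether or not the quadruple $(t,x,y,z)$ already lies in the region $\kappa d(y,z) \leq t + \min\{d(x,y),d(x,z)\}$ where the hypothesis grants \ref{en:lai_lip}. In that \emph{good case} there is nothing to do: the desired inequality is exactly the assumed one, since condition \ref{en:lai_lip} carries $(1+d(x,y)/t)^{-n-2\gamma}$ on its right-hand side, precisely as in the target range. All the work lies in the complementary \emph{bad case} $\kappa d(y,z) > t + \min\{d(x,y),d(x,z)\}$, where I shall not exploit smoothness of $K$ at all, but simply estimate $|K(t,x,y)-K(t,x,z)| \leq K(t,x,y) + K(t,x,z)$ using the pointwise bound \ref{en:lai_upper} on each term (note that \ref{en:lai_upper} holds for all $t,x,y$, since taking $z=y$, resp.\ $y=z$, places the quadruple in the hypothesis range).

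The crux is the geometric remark that in the bad case the H\"older factor $(d(y,z)/t)^\gamma$ is \emph{bounded below}: from $\kappa d(y,z) > t$ we get $d(y,z)/t > 1/\kappa$, hence $(d(y,z)/t)^\gamma \geq \kappa^{-\gamma}$. Moreover, since $\min\{d(x,y),d(x,z)\} \geq d(x,y)-d(y,z)$ by the triangle inequality, the bad-case inequality also yields $t + d(x,y) < (\kappa+1)\,d(y,z)$, i.e.\ $1+d(x,y)/t < (\kappa+1)\,d(y,z)/t$. Feeding this last estimate into \ref{en:lai_upper} converts the surplus power of $(1+d(x,y)/t)$ into a power of $d(y,z)/t$:
\[
K(t,x,y) \leq C_1 t^{-n} (1+d(x,y)/t)^{\gamma} (1+d(x,y)/t)^{-n-2\gamma} \leq C_1 (\kappa+1)^\gamma\, t^{-n} (d(y,z)/t)^\gamma (1+d(x,y)/t)^{-n-2\gamma},
\]
which already has the shape of \ref{en:lai_lip}.

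For $K(t,x,z)$ I will use the other triangle-inequality consequence of \eqref{eq:LAI_range}, namely $3d(y,z) \leq t + d(x,z)$, and split into two sub-cases. If $d(x,z) \leq t$, then $3d(y,z) \leq 2t$, so $d(x,y) \leq d(x,z)+d(y,z) \leq 5t/3$, whence both $(1+d(x,y)/t)^{-n-2\gamma}$ and $(d(y,z)/t)^\gamma$ are bounded below by constants depending only on $\kappa$ (and on the fixed $n,\gamma$), while $K(t,x,z) \leq C_1 t^{-n}$ by \ref{en:lai_upper}, and the required bound follows. If instead $d(x,z) > t$, then $3d(y,z) \leq t+d(x,z) < 2d(x,z)$ gives $d(x,z) > \tfrac32 d(y,z)$, while the bad-case bound gives $d(x,z) < (\kappa+2)d(y,z)$; thus $d(y,z)$, $d(x,z)$ and $d(x,y)$ are mutually comparable with constants depending on $\kappa$, and the same manipulation as for $K(t,x,y)$ --- together with $1+d(x,y)/t \leq \tfrac53(1+d(x,z)/t)$, which lets one pass from $d(x,z)$ back to $d(x,y)$ in the denominator --- turns \ref{en:lai_upper} for $K(t,x,z)$ into a bound of the desired shape. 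Adding the two contributions and recombining with the good case yields \ref{en:lai_lip} in the range \eqref{eq:LAI_range} with a constant $C_3'$ depending only on $\kappa$, $C_1$, $C_3$ (and implicitly on $n$, $\gamma$).

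The one point requiring care --- and the only thing I would double-check --- is the bookkeeping in the bad case: one must verify that each geometric comparison ($d(x,y) < (\kappa+1)d(y,z)$, $t < \kappa d(y,z)$, $3d(y,z) \leq t + d(x,z)$, and the comparability of $d(x,z)$ and $d(y,z)$ when $d(x,z) > t$) is a genuine consequence of the two defining inequalities of the bad case together with \eqref{eq:LAI_range}, valid \emph{for every} $\kappa > 0$, with no hidden assumption such as $\kappa \leq 4$, since the lemma is asserted for all $\kappa$. Beyond this no real obstacle is expected: once the observation ``$d(y,z)$ is comparable to $t$ in the bad case'' is in place, the argument is entirely elementary.
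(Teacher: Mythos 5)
Your proposal is correct and follows essentially the same route as the paper: on the part of the range \eqref{eq:LAI_range} not covered by the hypothesis, one bounds $|K(t,x,y)-K(t,x,z)|$ by $K(t,x,y)+K(t,x,z)$ and uses condition \ref{en:lai_upper} of Definition \ref{def: LAI} together with triangle-inequality comparisons showing that there $d(y,z)/t$ dominates a constant multiple of $1+d(x,y)/t$. The paper does this in one stroke (via $1+d(x,z)/t \geq \tfrac34\bigl(1+d(x,y)/t\bigr)$ and $d(y,z)/t > \tfrac{3}{4\kappa}\bigl(1+d(x,y)/t\bigr)$), so your subcase split for $K(t,x,z)$ and the (not literally needed, and not quite true) claim that all three distances are mutually comparable can be dispensed with, but your argument is sound as the only inequalities you actually use are valid.
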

\begin{proof}
We only need to check condition \ref{en:lai_lip} in the range
\[
(t+\min\{ d(x,y), d(x,z)\})/\kappa < d(y,z) \leq (t+d(x,y))/4.
\]
However in this range, by the triangle inequality,
\[
 1+\frac{d(x,z)}{t} \geq \frac{3}{4} \left(1+\frac{d(x,y)}{t}\right), \qquad \frac{d(y,z)}{t} > \frac{3}{4\kappa}\left(1+\frac{d(x,y)}{t}\right)
\]
so condition \ref{en:lai_lip} follows from \ref{en:lai_upper} with $C_3' = \max\{C_3, C_1 (1+(4/3)^{n+\gamma}) (4\kappa/3)^\gamma\}$.
\end{proof}

A simple example of $\la$-LAI on $M$ is
\begin{equation}\label{eq:LAI_example}
K(t,x,y) = t^{-n} \psi(d(x,y)/t),
\end{equation}
where $\psi : [0,\infty) \to \RR$ is $\gamma$-H\"older, $\psi(0) > 0$ and $\supp \psi \subseteq [0,\la]$. Moreover, the following lemma shows how one can construct new LAI by localising existing ones via suitable H\"older cutoffs.

\begin{lemma}\label{lem:lai_cutoff}
Let $K : (0,1] \times M \times M \to [0,\infty)$ satisfy \ref{en:lai_upper} and \ref{en:lai_lip} of Definition \ref{def: LAI} for some constants $C_1,C_3>0$. Let $\Phi : M \times M \to [0,\infty)$ be such that
\[
|\Phi(x,y)| \leq L, \qquad |\Phi(x,y) - \Phi(x,z)| \leq L d(y,z)^\gamma
\]
for some $L > 0$ and all $x,y,z \in M$. Let $\la>0$ and assume further that either $K$ satisfies \ref{en:lai_supp} of Definition \ref{def: LAI}, or
\[
\Phi(x,y) = 0 \text{ if } d(x,y) > \lambda.
\]
Then $K' : (0,1] \times M \times M \to [0,\infty)$ defined by
\[
K'(t,x,y) = \Phi(x,y) K(t,x,y)
\]
satisfies \ref{en:lai_supp}, \ref{en:lai_upper} and \ref{en:lai_lip} of Definition \ref{def: LAI}, with constants $C_1',C_3'$ (in place of $C_1,C_3$) that only depend on $\la,C_1,C_3,L$.
\end{lemma}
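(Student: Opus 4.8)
The plan is to verify that $K' = \Phi K$ inherits each of the conditions \ref{en:lai_supp}, \ref{en:lai_upper}, \ref{en:lai_lip} from $K$ separately, treating them in that order. The support condition \ref{en:lai_supp} is immediate: if $K$ already satisfies \ref{en:lai_supp} then so does $K' = \Phi K$, since $K'(t,x,y) = 0$ whenever $K(t,x,y) = 0$; and in the alternative case where $\Phi(x,y) = 0$ for $d(x,y) > \lambda$, the product $\Phi(x,y) K(t,x,y)$ vanishes in that range as well. Either way $K'$ vanishes for $d(x,y) > \lambda$. The upper bound \ref{en:lai_upper} is equally direct: from $0 \leq \Phi(x,y) \leq L$ we get
\[
K'(t,x,y) = \Phi(x,y) K(t,x,y) \leq L \, K(t,x,y) \leq L C_1 \, t^{-n} (1+d(x,y)/t)^{-n-\gamma},
\]
so \ref{en:lai_upper} holds with $C_1' = L C_1$.

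The only genuine computation is the Lipschitz-type condition \ref{en:lai_lip}, which I would handle by the standard product rule for H\"older estimates. Fix $t \in (0,1]$ and $x,y,z \in M$ in the range \eqref{eq:LAI_range}, i.e.\ $4d(y,z) \leq t + d(x,y)$. Write
\[
K'(t,x,y) - K'(t,x,z) = \Phi(x,y)\bigl(K(t,x,y) - K(t,x,z)\bigr) + \bigl(\Phi(x,y) - \Phi(x,z)\bigr) K(t,x,z).
\]
For the first term, use $|\Phi(x,y)| \leq L$ together with \ref{en:lai_lip} for $K$ to bound it by $L C_3 \, t^{-n} (d(y,z)/t)^\gamma (1+d(x,y)/t)^{-n-2\gamma}$. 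For the second term, use $|\Phi(x,y) - \Phi(x,z)| \leq L d(y,z)^\gamma = L t^\gamma (d(y,z)/t)^\gamma$ and the upper bound \ref{en:lai_upper} for $K(t,x,z)$: this gives a bound by $L C_1 \, t^{-n} (d(y,z)/t)^\gamma (1+d(x,z)/t)^{-n-\gamma}$. The one point requiring care is that this last factor involves $d(x,z)$ rather than $d(x,y)$; but in the range \eqref{eq:LAI_range} the triangle inequality yields $d(x,z) \geq d(x,y) - d(y,z) \geq d(x,y) - (t+d(x,y))/4$, hence $1 + d(x,z)/t \geq (3/4)(1+d(x,y)/t)$, so $(1+d(x,z)/t)^{-n-\gamma} \leq (4/3)^{n+\gamma}(1+d(x,y)/t)^{-n-\gamma}$, and since $(1+d(x,y)/t)^{-n-\gamma} \leq (1+d(x,y)/t)^{-n-2\gamma}$ is false in general --- rather, we simply keep the weaker exponent, noting $(1+d(x,y)/t)^{-n-\gamma} \geq (1+d(x,y)/t)^{-n-2\gamma}$, so to land on the exponent $-n-2\gamma$ we instead absorb the extra decay: actually it suffices to observe $(1+d(x,y)/t)^{-n-\gamma} \le 2^\gamma (1+d(x,y)/t)^{-n-2\gamma}$ is likewise false, so the cleanest route is to note that \ref{en:lai_lip} with exponent $-n-2\gamma$ is \emph{weaker} than the same bound with exponent $-n-\gamma$, hence it is enough to prove the second term is bounded by a multiple of $t^{-n}(d(y,z)/t)^\gamma (1+d(x,y)/t)^{-n-2\gamma}$, and since $(1+d(x,y)/t)^{-n-\gamma} \leq (1+d(x,y)/t)^{-n-2\gamma}$ when $d(x,y)/t \leq $ some bound --- to avoid this nuisance entirely I would instead invoke Lemma \ref{lem:lai_lip_range}: it suffices to verify \ref{en:lai_lip} for $K'$ in the more restrictive range $\kappa d(y,z) \leq t + \min\{d(x,y),d(x,z)\}$ for a suitable $\kappa$ (e.g.\ $\kappa = 4$), where $1+d(x,z)/t$ and $1+d(x,y)/t$ are comparable up to absolute constants and the exponent mismatch is harmless.

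Collecting the two bounds, $K'$ satisfies \ref{en:lai_lip} (in the range given by Lemma \ref{lem:lai_lip_range}, hence in the range \eqref{eq:LAI_range}) with a constant $C_3'$ depending only on $\lambda$, $C_1$, $C_3$, $L$. I do not expect any of this to be difficult; the only mild obstacle is bookkeeping the passage between $d(x,y)$ and $d(x,z)$ in the decay factors and the exponent $\gamma$ versus $2\gamma$, which is exactly what Lemma \ref{lem:lai_lip_range} is designed to streamline, so I would lean on it rather than redo the triangle-inequality juggling by hand.
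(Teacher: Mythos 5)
Your handling of \ref{en:lai_supp} and \ref{en:lai_upper} is fine and matches the paper, but the argument for \ref{en:lai_lip} has a genuine gap, and it is exactly the point you circle around without resolving. In the term $(\Phi(x,y)-\Phi(x,z))K(t,x,z)$, the H\"older bound on $\Phi$ together with \ref{en:lai_upper} gives decay $(1+d(x,\cdot)/t)^{-n-\gamma}$, whereas \ref{en:lai_lip} demands $(1+d(x,y)/t)^{-n-2\gamma}$; an extra factor $(1+d(x,y)/t)^{-\gamma}$ must be produced from somewhere. Invoking Lemma \ref{lem:lai_lip_range} does not supply it: that lemma only lets you work in a range where $1+d(x,y)/t$ and $1+d(x,z)/t$ are comparable (and trivialises the regime where $d(y,z)$ is comparable to $t+d(x,y)$), so it fixes the $d(x,y)$-versus-$d(x,z)$ bookkeeping but cannot upgrade the exponent from $-n-\gamma$ to $-n-2\gamma$. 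Your assertion that in the restricted range ``the exponent mismatch is harmless'' is simply not true, and note also that you discarded the factor $t^\gamma$ (via $t\leq 1$) when writing $Ld(y,z)^\gamma = Lt^\gamma(d(y,z)/t)^\gamma$, which is precisely the factor you need to keep. Moreover, your argument never uses the support hypothesis, yet without it the conclusion is false: on $\RR^n$ the kernel $K(t,x,y)=t^\gamma(t+|x-y|)^{-n-\gamma}$ satisfies \ref{en:lai_upper} and \ref{en:lai_lip}, but multiplying it by a merely bounded, H\"older, non-localised $\Phi$ produces a term of size $\approx d(y,z)^\gamma(1+d(x,y)/t)^{-n-\gamma}t^{-n}t^\gamma$, which violates \ref{en:lai_lip} for $d(x,y)$ large. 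This also explains why the constant $C_3'$ in the statement depends on $\lambda$, a dependence your write-up claims but never generates.

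The repair is the paper's argument: keep the factor $t^\gamma$ and write $t^\gamma = (t+d(x,y))^\gamma (1+d(x,y)/t)^{-\gamma}$, then split into cases. If $d(x,y)\leq 1+2\lambda$, then $t+d(x,y)\leq 2+2\lambda$, so the second term is bounded by $L C_1 (2+2\lambda)^\gamma\, t^{-n}(d(y,z)/t)^\gamma(1+d(x,y)/t)^{-n-2\gamma}$, which combined with your (correct) bound $LC_3\,t^{-n}(d(y,z)/t)^\gamma(1+d(x,y)/t)^{-n-2\gamma}$ on the first term gives \ref{en:lai_lip} with $C_3'$ depending on $\lambda, C_1, C_3, L$. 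If instead $d(x,y)>1+2\lambda$, then in the range \eqref{eq:LAI_range} the triangle inequality gives $d(x,z)\geq d(x,y)-(t+d(x,y))/4>\lambda$, so both $K'(t,x,y)$ and $K'(t,x,z)$ vanish (this is where the support property of $K'$, which you established under either alternative of the hypothesis, is actually used) and there is nothing to prove. So the decomposition you chose is the right one, but the step you tried to offload onto Lemma \ref{lem:lai_lip_range} must instead be done by exploiting localisation.
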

\begin{proof}
Under our assumptions, it is immediately seen that $K'$ satisfies \ref{en:lai_supp} and \ref{en:lai_upper} of Definition \ref{def: LAI}, by taking $C_1' = C_1 L$. As for \ref{en:lai_lip}, note that, if $x,y,z \in M$  and $t \in (0,1]$ satisfy $4d(y,z) \leq t+d(x,y)$, then
\begin{equation}\label{eq:cut_lip}
\begin{split}
&|K'(t,x,y) - K'(t,x,z)| \\
&\leq |\Phi(x,y)-\Phi(x,z)| K(t,x,y) + \Phi(x,z) |K(t,x,y)-K(t,x,z)| \\
&\leq C_1 L d(y,z)^\gamma t^{-n} (1+d(x,y)/t)^{-n-1} + C_3 L t^{-n} (d(y,z)/t)^{\gamma} (1+d(x,y)/t)^{-n-1-\gamma}.
\end{split}
\end{equation}
Now, if $d(x,y) \leq 1+2\lambda$, then 
\[
t(1+d(x,y)/t) = t +d(x,y) \leq 2+2\lambda
\]
and from \eqref{eq:cut_lip} we deduce that
\begin{equation}\label{eq:cut_lip2}
|K'(t,x,y) - K'(t,x,z)| \leq L (C_1(2+2\lambda)^\gamma+ C_3) t^{-n} (d(y,z)/t)^\gamma (1+d(x,y)/t)^{-n-1-\gamma}.
\end{equation}
If instead $d(x,y) > 1+2\lambda$, then $d(x,z) \geq d(x,y) - d(y,z) \geq d(x,y) - (t+d(x,y)/4) \geq (3d(x,y)-1)/4 > (1+3\lambda)/2 > \lambda$, so the left-hand side of \eqref{eq:cut_lip} vanishes and \eqref{eq:cut_lip} is trivially true. Consequently $K'$ satisfies \ref{en:lai_lip} of Definition \ref{def: LAI} by taking $C_3' = L(C_1(2+2\lambda)+ C_3)$.
\end{proof}

We now show that, in the decomposition of any given AI, the LAI part can be chosen so to be supported arbitrarily close to the diagonal.

\begin{lemma}\label{lem:ai_decomp}
Let $K$ be an AI on $M$. Then, for all $\la>0$, there exists a decomposition $K = K_1+K_2$ where $K_1$ is a $\lambda$-LAI on $M$ and $K_2$ satisfies \eqref{eq:ai_l1bound}.
\end{lemma}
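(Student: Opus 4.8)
The plan is to start from a decomposition $K = K_1' + K_2'$ afforded by Definition \ref{def: AI}, where $K_1'$ is a $\lambda'$-LAI for some $\lambda' > 0$ and $K_2'$ satisfies \eqref{eq:ai_l1bound}, and then to shrink the support of $K_1'$ to a $\lambda$-neighbourhood of the diagonal by multiplying it by a suitable radial cutoff. Since a $\lambda'$-LAI automatically satisfies condition \ref{en:lai_supp} of Definition \ref{def: LAI} with any parameter $\lambda \geq \lambda'$, we may assume $\lambda < \lambda'$. I would then fix a Lipschitz function $\chi \colon [0,\infty) \to [0,1]$ with $\chi \equiv 1$ on $[0,\lambda/2]$, $\chi \equiv 0$ on $[\lambda,\infty)$ and Lipschitz constant $2/\lambda$ (for instance $\chi(s) = \max\{0,\min\{1,2-2s/\lambda\}\}$), and set $\Phi(x,y) = \chi(d(x,y))$. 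By the reverse triangle inequality $\Phi$ is Lipschitz of constant $2/\lambda$ in its second variable, uniformly in the first; since moreover $0 \leq \Phi \leq 1$, treating separately $d(y,z) \leq 1$ and $d(y,z) > 1$ shows that $\Phi$ is $\gamma$-H\"older in the second variable with constant $L = \max\{2/\lambda,2\}$, uniformly in the first. Also $\Phi(x,y) = 0$ whenever $d(x,y) > \lambda$.

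Next I would invoke Lemma \ref{lem:lai_cutoff} with this $\Phi$ (the hypothesis on the support of $\Phi$ being met): the kernel $K_1 := \Phi K_1'$ then satisfies conditions \ref{en:lai_supp} (with parameter $\lambda$), \ref{en:lai_upper} and \ref{en:lai_lip} of Definition \ref{def: LAI}. The nondegeneracy condition \ref{en:lai_nondeg} is not supplied by that lemma, but it is immediate: since $\Phi(x,x) = \chi(0) = 1$, we have $K_1(t,x,x) = K_1'(t,x,x) \geq C_2 t^{-n}$ because $K_1'$ is a $\lambda'$-LAI. Hence $K_1$ is a $\lambda$-LAI on $M$.

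Finally, set $K_2 := K - K_1 = K_2' + (1-\Phi)K_1'$. As $K_2'$ already satisfies \eqref{eq:ai_l1bound} and the left-hand side of \eqref{eq:ai_l1bound} is subadditive in the kernel, it suffices to control $(1-\Phi)K_1'$. Here lies the only mild subtlety: $\sup_{t\in(0,1]}|K_1'(t,x,y)|$ is not integrable near the diagonal, which is exactly why $\chi$ was arranged to equal $1$ on $[0,\lambda/2]$, so that $(1-\Phi)(x,y)K_1'(t,x,y)$ vanishes for $d(x,y) \leq \lambda/2$; it also vanishes for $d(x,y) > \lambda'$ by condition \ref{en:lai_supp} for $K_1'$. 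On the annular region $\lambda/2 \leq d(x,y) \leq \lambda'$, rewriting condition \ref{en:lai_upper} as $K_1'(t,x,y) \leq C_1 t^\gamma (t+d(x,y))^{-n-\gamma}$ and using $t \leq 1$ and $t+d(x,y) \geq \lambda/2$ gives $\sup_{t\in(0,1]}|K_1'(t,x,y)| \leq C_1 (\lambda/2)^{-n-\gamma}$; combining this with $|1-\Phi| \leq 1$ yields
\[
\int_M \sup_{t \in (0,1]} |(1-\Phi)(x,y)\,K_1'(t,x,y)| \wrt\mu(x) \leq C_1 (\lambda/2)^{-n-\gamma}\, \mu(B_{\lambda'}(y)) \leq C_1 (\lambda/2)^{-n-\gamma} D_{\lambda'} (\lambda')^n,
\]
the last step being the uniform local $n$-Ahlfors estimate \eqref{f: local Ahlfors} applied to $B_{\lambda'}(y) \in \Balls_{\lambda'}$; crucially the resulting bound is independent of $y$. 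Therefore $K_2$ satisfies \eqref{eq:ai_l1bound}, and $K = K_1 + K_2$ is the desired decomposition. The main work is concentrated in checking that $\Phi$ fulfils the hypotheses of Lemma \ref{lem:lai_cutoff} and in the uniform-in-$y$ volume bound, which is precisely where the bounded-geometry assumption enters through uniform local Ahlfors regularity.
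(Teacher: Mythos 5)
Your proposal is correct and follows essentially the same route as the paper's proof: reduce to $\la<\la'$, multiply $K_1'$ by a radial cutoff equal to $1$ near the diagonal, invoke Lemma \ref{lem:lai_cutoff} for conditions \ref{en:lai_supp}, \ref{en:lai_upper}, \ref{en:lai_lip}, and bound the leftover annular piece $(1-\Phi)K_1'$ via the upper bound \ref{en:lai_upper} and the uniform local $n$-Ahlfors property. Your explicit check of the on-diagonal lower bound \ref{en:lai_nondeg} (via $\Phi(x,x)=1$) and of the H\"older hypothesis on $\Phi$ makes explicit two small points the paper leaves implicit, but the argument is the same.
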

\begin{proof}
By definition, we can decompose $K = K_1' + K_2'$, where $K_1'$ is an $r$-LAI for some $r > 0$ and $K_2'$ satisfies \eqref{eq:ai_l1bound}. If $\la \geq r$, there is nothing to prove (we can take $K_1 = K_1'$ and $K_2 = K_2'$). Suppose instead that $\la < r$, and define
\[
K_1(t,x,y) = \phi(d(x,y)) K_1'(x,y),
\]
where $\phi : \RR \to [0,1]$ is smooth, $\supp \phi \subseteq [-\lambda,\lambda]$ and $\phi|_{[-\lambda/2,\lambda/2]} \equiv 1$. By Lemma \ref{lem:lai_cutoff}, $K_1$ is a $\lambda$-LAI, so it only remains to check that
\[
K_2''(t,x,y) =  (1-\phi(d(x,y))) K_1'(x,y)
\]
satisfies \eqref{eq:ai_l1bound}.

On the other hand, $K_2''(t,x,y)$ vanishes whenever $d(x,y) < \lambda/2$ or $d(x,y) > r$. If instead $\lambda/2 \leq d(x,y) \leq r$,
\[\begin{split}
K_2''(t,x,y) &\leq C t^{-n} (1+d(x,y)/t)^{-n-\gamma} \\
&\leq C t^\gamma
\end{split}\]
where the last constant $C$ may depend on $\lambda$ and $r$. Since $K_2''$ is nonnegative, this proves that $\sup_{t \in (0,1]} |K_2''(t,x,y)| \leq C \chr_{[\lambda/2,r]}(d(x,y))$, and from the uniform local $n$-Ahlfors property we immediately deduce that $K_2''$ satisfies \eqref{eq:ai_l1bound}.
\end{proof}

To each measurable $K : (0,1] \times M \times M \to \CC$ and $t$ in $(0,1]$,
we associate
the integral operator $\opK_t$ and the maximal operator $\opK_*$ by the rules
\[
\opK_tf(x)
=  \int_M K(t,x,y)\, f(y)  \wrt\mu(y)
\quad\hbox{and}\quad
\opK_*f(x)
=  \sup_{t\in (0,1]}\, |\opK_t f(x) |
\]
respectively. 

Let $\maxM_R$ denotes the centred local Hardy--Littlewood maximal operator at scale $R>0$, defined by 
\[
\maxM_R f (x)
\defeq \sup_{0<r\leq R} \dashint_{B_r(x)} \mod{f(y)} \wrt\mu(y).  
\]
It is well known (see, e.g., \cite[Proposition 2.2]{LSW}) that, under our assumptions on $M$, $\maxM_R$ is of weak type (1,1) and bounded on $L^p(M)$ for all $p \in (1,\infty]$.

\begin{lemma}\label{lem:HLbd}
Let $\la, C_1> 0$. Then there exists $C>0$ such that, if $K : (0,1] \times M \times M \to [0,\infty)$ satisfies \ref{en:lai_supp} and \ref{en:lai_upper} of Definition \ref{def: LAI}, then 
\begin{equation} \label{f: control with local HL}
\opK_* f 
\leq C\, \maxM_\lambda f
\quant f \in L_{\loc}^1(M).
\end{equation}
\end{lemma}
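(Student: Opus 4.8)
The plan is to estimate $|\opK_t f(x)|$ uniformly in $t \in (0,1]$ by a constant multiple of $\maxM_\lambda f(x)$. Fix $f \in L^1_\loc(M)$, $x \in M$ and $t \in (0,1]$. By the support condition \ref{en:lai_supp}, the integral defining $\opK_t f(x)$ is supported in the ball $B_\lambda(x)$, so $\opK_t f(x)$ is well defined; by the upper bound \ref{en:lai_upper},
\[
|\opK_t f(x)| \leq C_1 \int_{B_\lambda(x)} t^{-n} (1+d(x,y)/t)^{-n-\gamma} |f(y)| \wrt\mu(y).
\]
The standard device is to decompose the region of integration into the ``central'' ball $B_t(x)$ and the dyadic annuli $A_k = \{ y : 2^{k-1} t < d(x,y) \leq 2^k t \}$ for $k \geq 1$ (intersected with $B_\lambda(x)$). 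On $B_t(x)$ the kernel is bounded by $C_1 t^{-n}$, and since $\mu(B_t(x)) \leq D_\lambda t^n$ by the uniform local $n$-Ahlfors property \eqref{f: local Ahlfors} (note $t \leq 1 \leq \lambda$ may be assumed, or more precisely $t \leq \min\{1,\lambda\}$ and one adjusts by at most a $\lambda$-dependent factor), that contribution is $\leq C \dashint_{B_t(x)} |f| \wrt\mu \leq C \maxM_\lambda f(x)$, provided $t \leq \lambda$; the case $t > \lambda$ is handled by absorbing the finitely-many-scales discrepancy into the constant, or simply by noting $1+d(x,y)/t$ is bounded below on $B_\lambda(x)$ when $t > \lambda$, making the whole integral $\leq C t^{-n} \int_{B_\lambda(x)} |f| \leq C \maxM_\lambda f(x)$ again using \eqref{f: local Ahlfors}.

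On the annulus $A_k$, the kernel is bounded by $C_1 t^{-n} (1+2^{k-1})^{-n-\gamma} \leq C_1 2^{-k(n+\gamma)} t^{-n}$, and $A_k \subseteq B_{2^k t}(x)$, so the $k$-th contribution is at most
\[
C_1 2^{-k(n+\gamma)} t^{-n} \int_{B_{2^k t}(x)} |f| \wrt\mu = C_1 2^{-k(n+\gamma)} t^{-n} \mu(B_{2^k t}(x)) \dashint_{B_{2^k t}(x)} |f| \wrt\mu.
\]
Now $\mu(B_{2^k t}(x)) \leq D_\lambda (2^k t)^n$ by \eqref{f: local Ahlfors}, as long as $2^k t \leq \lambda$ — and this is exactly the constraint imposed by the support condition, since $A_k \cap B_\lambda(x) = \emptyset$ once $2^{k-1} t > \lambda$, i.e. only the indices $k$ with $2^k t \leq 2\lambda$ contribute. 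For such $k$ one gets $\mu(B_{2^k t}(x)) \leq D_{2\lambda} (2^k t)^n$, so the $k$-th term is bounded by $C_1 D_{2\lambda} 2^{-k\gamma} \maxM_\lambda f(x)$ (with $\maxM_\lambda$ rather than $\maxM_{2\lambda}$ after a further crude adjustment, or one simply states the bound with $\maxM_{2\lambda}$ and invokes the easy comparison $\maxM_{2\lambda} \leq C \maxM_\lambda$ coming from local doubling \eqref{f: local doubling}). Summing the geometric series $\sum_{k \geq 1} 2^{-k\gamma} < \infty$ and adding the central contribution yields $|\opK_t f(x)| \leq C \maxM_\lambda f(x)$ with $C$ depending only on $\lambda$, $C_1$, $n$, $\gamma$ and the local Ahlfors constant $D_{2\lambda}$; taking the supremum over $t \in (0,1]$ gives \eqref{f: control with local HL}.

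I do not expect a genuine obstacle here — this is the routine ``annular decomposition against a Hardy–Littlewood maximal function'' argument. The only points requiring a little care are bookkeeping ones: making sure the support condition \ref{en:lai_supp} correctly truncates the sum over $k$ so that every ball $B_{2^k t}(x)$ to which \eqref{f: local Ahlfors} is applied has radius $\leq 2\lambda$ (so that one may use the constant $D_{2\lambda}$ uniformly), and absorbing the harmless replacement of $\maxM_{2\lambda}$ by $\maxM_\lambda$ via local doubling. The constant $C$ genuinely depends on $\lambda$, consistent with the statement, since both the cutoff scale and the Ahlfors constant $D_{2\lambda}$ enter.
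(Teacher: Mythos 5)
Your argument is the same standard one as in the paper: bound the kernel by \ref{en:lai_upper}, decompose $B_\lambda(x)$ into dyadic annuli, apply the uniform local $n$-Ahlfors property \eqref{f: local Ahlfors} on each piece, and sum a geometric series; the only cosmetic difference is that the paper's annuli are $B_{2^{-j}\lambda}(x)\setminus B_{2^{-j-1}\lambda}(x)$ (scales decreasing from $\lambda$, so every ball used has radius $\leq\lambda$), whereas yours grow from scale $t$.

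One justification you offer is, however, incorrect as stated: the pointwise comparison $\maxM_{2\lambda}f\leq C\,\maxM_\lambda f$ does \emph{not} follow from local doubling for the centred maximal operator — if $f$ is supported in $B_{2\lambda}(x)\setminus B_\lambda(x)$ then $\maxM_\lambda f(x)=0$ while $\maxM_{2\lambda}f(x)>0$. The fix is immediate and already contained in your own setup: by \ref{en:lai_supp} the integral over $A_k$ is really over $A_k\cap B_\lambda(x)\subseteq B_{\min\{2^k t,\lambda\}}(x)$, a ball centred at $x$ of radius at most $\lambda$, so applying \eqref{f: local Ahlfors} to that ball gives the $k$-th contribution directly as $C\,2^{-k\gamma}\maxM_\lambda f(x)$, with no detour through $\maxM_{2\lambda}$ (or, equivalently, use the paper's annuli at scales $2^{-j}\lambda$, which keeps all radii $\leq\lambda$ automatically). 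With that adjustment your proof is complete and matches the paper's.
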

\begin{proof}
By our assumptions on $K$,
\[
\begin{split}
\mod{\opK_t f(x)}
	& \leq  C_1\, \int_{B_\la(x)} \frac{t}{(t+d(x,y))^{n+\gamma}} \, \mod{f(y)} \wrt \mu(y) \\
	& \leq  C_1\, \sum_{j\geq 0} \, \int_{B_{2^{-j} \la}(x)\setminus B_{2^{-j-1} \la}(x)}
	      \frac{t}{(t+2^{-j-1}\la)^{n+\gamma}} \, \mod{f(y)} \wrt \mu(y) \\
	& \leq  C\, \sum_{j\geq 0} \, \frac{2^jt}{(2^{j}t+1)^{n+\gamma}} \,\, \maxM_Rf(x),  
\end{split}
\]
where we used the uniform local $n$-Ahlfors property of $M$.
It is straightforward to check that the series above is uniformly bounded with respect to~$t$, and the
required estimate follows.
\end{proof}

\begin{proposition}\label{p: limmaxop}
For each $K$ in $\aiV$ the maximal operator $\opK_*$ is of weak type (1,1), bounded on $L^p(M)$ for $p \in (1,\infty]$, and bounded from $\ghu(M)$ to $L^1(M)$. 
\end{proposition}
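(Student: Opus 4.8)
The plan is to establish the three claimed mapping properties of $\opK_*$ for $K \in \aiV$ in turn, reducing each to results already available. First, for the weak type $(1,1)$ bound and the $L^p$ boundedness for $p \in (1,\infty]$: by definition a $\la$-LAI satisfies conditions \ref{en:lai_supp} and \ref{en:lai_upper} of Definition \ref{def: LAI}, so Lemma \ref{lem:HLbd} gives the pointwise majorisation $\opK_* f \leq C\, \maxM_\la f$ for all $f \in L^1_\loc(M)$. Since $\maxM_\la$ is of weak type $(1,1)$ and bounded on $L^p(M)$ for every $p \in (1,\infty]$ under our standing assumptions on $M$ (as recalled just before Lemma \ref{lem:HLbd}), the same properties are inherited by $\opK_*$.

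The substantive part is the boundedness from $\ghu(M)$ to $L^1(M)$. Here I would invoke Lemma \ref{lemma:sublinear_hardy}: $\opK_*$ is clearly sublinear, and we have just shown it is bounded from $L^1(M)$ to $L^{1,\infty}(M)$, so it suffices to prove a uniform bound $\|\opK_* a\|_{L^1(M)} \leq C$ over all $\ghu(M)$-atoms $a$ at some fixed scale, say $s = 1$. Fix such an atom $a$, supported in a ball $B = B_{r_B}(c_B)$ with $r_B \leq 1$ and $\|a\|_2 \leq \mu(B)^{-1/2}$. I would split the $L^1$ norm of $\opK_* a$ into the contribution from a fixed dilate of $B$ — say the ball $2(\la+1)B$, or more precisely the set of points within distance $\la + r_B$ of $c_B$, which is where the kernel ``sees'' the support of $a$ — and the complementary far region. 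On the near region, one uses the $L^2$ boundedness of $\opK_*$ together with Cauchy--Schwarz and the uniform local $n$-Ahlfors property \eqref{f: local Ahlfors}: $\|\opK_* a\|_{L^1(\text{near})} \leq \mu(\text{near})^{1/2} \|\opK_* a\|_2 \leq C \mu(B)^{1/2} \|a\|_2 \leq C$, using that the measure of the near region is comparable to $\mu(B)$ up to constants depending only on $\la$ and the local Ahlfors constant (this is where $r_B \leq 1$ and the support constraint \ref{en:lai_supp} are essential, keeping everything at a controlled local scale).

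For the far region — points $x$ with $d(x,c_B) > \la + r_B$ — the support condition \ref{en:lai_supp} forces $K(t,x,y) = 0$ for every $y \in B$ when $d(x,c_B)$ is large enough (specifically once $d(x,y) > \la$ for all $y \in B$, which holds as soon as $d(x,c_B) > \la + r_B$), so in fact $\opK_* a$ vanishes identically on the far region and there is nothing to estimate there. Thus, somewhat pleasantly, the cancellation condition on the atom is not even needed: the localisation built into the definition of $\aiV$ does all the work, and one only exploits the size condition on $a$ and the $L^2$ bound. This would complete the proof; the only mild subtlety is bookkeeping the constants to confirm they depend only on $\la$, $M$ and the LAI constants $C_1, C_2, C_3$ (not on the particular atom), which is exactly the uniformity hypothesis that Lemma \ref{lemma:sublinear_hardy} requires.

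I expect no serious obstacle here — the statement is essentially a packaging of Lemma \ref{lem:HLbd}, the standard maximal-function bounds on $M$, and Lemma \ref{lemma:sublinear_hardy}, with an elementary atom estimate in between. If I had to name a main point of care, it is ensuring that the near region over which one applies $L^2$–Cauchy--Schwarz has measure controlled by a constant multiple of $\mu(B)$ uniformly in $B \in \Balls_1$; this is immediate from \eqref{f: local Ahlfors} applied at scale $s = \la + 1$, since the near region is itself a ball of radius at most $\la + 1$. One should also note that $\opK_* a$ is genuinely measurable (the supremum over $t \in (0,1]$ can be taken over a countable dense set by continuity of $K$ in $t$, or handled via the majorisation by $\maxM_\la a$), so that the $L^1$ and $L^2$ norms make sense.
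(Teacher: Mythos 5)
Your reduction of the weak type $(1,1)$ and $L^p$ bounds to Lemma \ref{lem:HLbd} and the local Hardy--Littlewood maximal operator, and the further reduction of the $\ghu(M)\to L^1(M)$ bound to a uniform estimate on atoms via Lemma \ref{lemma:sublinear_hardy}, are exactly right and match the paper. The gap is in the atom estimate, specifically in the claim that the ``near region'' $\{x \tc d(x,c_B)\leq \la+r_B\}$ has measure comparable to $\mu(B)$ with constants depending only on $\la$ and the Ahlfors constant. That is false: the near region is a ball of radius at least $\la$, so by \eqref{f: local Ahlfors} its measure is bounded \emph{below} by a positive constant depending on $\la$, whereas $\mu(B)\approx r_B^n$ can be arbitrarily small. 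Your Cauchy--Schwarz step therefore only yields $\|\opK_*a\|_{L^1(\mathrm{near})}\leq C\,\mu(\mathrm{near})^{1/2}\mu(B)^{-1/2}\approx C\,\la^{n/2} r_B^{-n/2}$, which blows up as $r_B\to 0$, so the uniformity over atoms required by Lemma \ref{lemma:sublinear_hardy} is lost.

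The deeper point is that your conclusion ``the cancellation condition is not even needed'' cannot be correct: conditions \ref{en:lai_nondeg} and \ref{en:lai_lip} of Definition \ref{def: LAI} give a lower bound $K(t,x,y)\geq c_1 t^{-n}$ for $d(x,y)\leq c_2 t$, so for a nonnegative normalised bump $a=\mu(B)^{-1}\chr_B$ (size condition only) one gets, choosing $t\approx d(x,c_B)$, that $\opK_*a(x)\gtrsim d(x,c_B)^{-n}$ on an annulus $r_B\lesssim d(x,c_B)\lesssim 1$, whose integral is of order $\log(1/r_B)$ and hence unbounded as $r_B\to 0$. Thus any proof ignoring cancellation must fail. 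The paper's proof repairs exactly this: it applies $L^2$--Cauchy--Schwarz only on $5B$ (whose measure \emph{is} comparable to $\mu(B)$ by local doubling), and on $B_{2\la}(c_B)\setminus 5B$ it writes $\opK_t a(x)=\int_B [K(t,x,y)-K(t,x,c_B)]a(y)\wrt\mu(y)$ using the cancellation of the standard atom, invokes the H\"older condition \ref{en:lai_lip}, and optimises in $t$ to obtain $\opK_*a(x)\leq C\,r_B^\gamma\,d(x,c_B)^{-n-\gamma}$, which integrates over the annulus to a constant; only for global atoms (radius equal to the scale $\la$), where no cancellation is available, does the support condition \ref{en:lai_supp} alone confine $\opK_*a$ to a ball comparable to $B$ so that the $L^2$ argument suffices. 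You should restructure your near/far splitting along these lines, keeping your (correct) observation that $\opK_*a$ vanishes beyond distance $\la+r_B$ from $c_B$ only to bound the outer limit of integration in the annulus estimate.
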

\begin{proof}

Assume that $K$ is a $\la$-LAI. By Lemma \ref{lem:HLbd} and the boundedness properties of the local Hardy--Littlewood maximal operator, we immediately deduce that $\opK_*$ is of weak-type $(1,1)$ and bounded on $L^p(M)$ for $p \in (1,\infty]$. Hence, in light of Lemma \ref{lemma:sublinear_hardy}, to conclude that $\opK_*$ is bounded from $\ghu(M)$ to $L^1(M)$, it is enough to show that $\opK_*$ is uniformly bounded on $2$-atoms at scale $\la$.

Let $a$ be a standard $2$-atom supported in a ball $B$ with centre $c_B$ and radius $r_B\leq \la$. 
Then 
\begin{equation}\label{eq:local_part}
\norm{\opK_*a}{L^1(5B)}
\leq C\mu(5 B)^{1/2}\,\norm{a}{2} 
\leq C, 
\end{equation}
by the $L^2$-boundedness of $\opK_*$ and the local doubling property.
Next, observe that $d(x,y)\geq 4 d(y,c_B)$ for every $x$ in $M\setminus (5B)$ and $y$ in $B$. 
By using the cancellation condition of the atom and Definition~\ref{def: LAI}~\ref{en:lai_lip}, we see that
\[
\begin{split}
	\mod{\opK_ta(x)}
	& \leq \int_M \mod{K(t,x,y)-K(t,x,c_B)} \,  \mod{a(y)} \wrt \mu(y)\\
	& \leq C \, t^{-n}\int_{B} \, \left(\frac{d(y,c_B)}{t}\right)^\gamma \, \left(1+\frac{d(x,y)}{t} \right)^{-n-2\gamma}|\, a(y)|\wrt \mu(y)\\
	& \leq C \, \frac{(tr_B)^\gamma}{ (t+d(x,c_B))^{n+2\gamma}}.  
\end{split}
\]
By optimizing with respect to $t$ in $(0,1]$, we find that 
\[
\sup_{t\in (0,1]} |\opK_ta(x)|
\leq C\, \frac{r_B^\gamma}{d(x,c_B)^{n+\gamma}}\,.
\]
Notice that, since $K$ is a $\lambda$-LAI, $\opK_*a$ is supported in $B_{2\lambda}(c_B)$.  
Therefore
\[\begin{split}
	\norm{\opK_*a}{L^1((5B)^c)}
	&\leq C \, r_B^\gamma \int_{B(c_B,2\lambda)\setminus 5B} d(x,c_B)^{-n-\gamma}\wrt \mu(x) \\
	&\leq C r_B^\gamma \int_{2r_B}^{2\lambda} s^{-n-\gamma} \, s^{n-1}\wrt s
	\leq C\,.
\end{split}\]
Assume now that $a$ is a global $2$-atom, with support contained in a ball $B$ of radius $\la$.  
Then $\opK_t a$ is supported in $5B$, and arguing as in \eqref{eq:local_part}
 concludes the proof.
\end{proof}

An immediate consequence of Proposition \ref{p: limmaxop} is the following boundedness result.

\begin{corollary}
For each $K$ in $\taiV$ the maximal operator $\opK_*$ is of weak type (1,1) and bounded from $\ghu(M)$ to $L^1(M)$. 
\end{corollary}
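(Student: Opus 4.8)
The plan is to reduce the statement for a general $K \in \taiV$ to the case of a $\lambda$-LAI, which is exactly Proposition \ref{p: limmaxop}, together with a routine estimate for the ``tail'' part. First I would write $K = K_1 + K_2$ as in Definition \ref{def: AI}, where $K_1 \in \aiV$ is a $\lambda$-LAI (for some $\lambda > 0$) and $K_2$ satisfies the uniform integrability bound \eqref{eq:ai_l1bound}. Since $\opK_t f = \opK_{1,t} f + \opK_{2,t} f$, sublinearity of the supremum gives the pointwise bound $\opK_* f \leq \opK_{1,*} f + \opK_{2,*} f$, where $\opK_{1,*}$ and $\opK_{2,*}$ are the maximal operators associated to $K_1$ and $K_2$ respectively. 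It therefore suffices to establish the two asserted boundedness properties (weak type $(1,1)$ and $\ghu(M) \to L^1(M)$) for each of the two pieces separately, since both classes of bounded operators are closed under the relevant sublinear combinations.

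For the piece $\opK_{1,*}$, both properties are immediate: Proposition \ref{p: limmaxop} states precisely that for $K_1 \in \aiV$ the maximal operator $\opK_{1,*}$ is of weak type $(1,1)$, bounded on $L^p(M)$ for $p \in (1,\infty]$, and bounded from $\ghu(M)$ to $L^1(M)$. So nothing further is needed there.

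For the piece $\opK_{2,*}$ the key observation is the pointwise domination
\[
\opK_{2,*} f(x) = \sup_{t \in (0,1]} \left| \int_M K_2(t,x,y)\, f(y) \wrt \mu(y) \right| \leq \int_M \Bigl( \sup_{t \in (0,1]} |K_2(t,x,y)| \Bigr) |f(y)| \wrt \mu(y),
\]
so $\opK_{2,*}$ is controlled by the positive integral operator with kernel $N(x,y) = \sup_{t \in (0,1]} |K_2(t,x,y)|$. By \eqref{eq:ai_l1bound} this kernel satisfies $\esssup_{y} \int_M N(x,y) \wrt \mu(x) < \infty$, and trivially $\esssup_{x} \int_M N(x,y) \wrt \mu(y) < \infty$ is \emph{not} assumed — but we do not need it. Indeed, the bound on $\int_M N(x,y)\wrt\mu(x)$ for a.e.\ $y$ gives, by Tonelli's theorem, that the operator $f \mapsto \int_M N(\cdot,y) |f(y)| \wrt \mu(y)$ is bounded on $L^1(M)$ (hence a fortiori of weak type $(1,1)$): $\|\int_M N(\cdot,y)|f(y)|\wrt\mu(y)\|_1 = \int_M |f(y)| \bigl(\int_M N(x,y)\wrt\mu(x)\bigr) \wrt\mu(y) \leq C \|f\|_1$. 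Since $\ghu(M)$ is continuously contained in $L^1(M)$, this same $L^1$ bound shows $\opK_{2,*}$ maps $\ghu(M)$ into $L^1(M)$ boundedly as well.

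Combining the two pieces, $\opK_*$ is of weak type $(1,1)$ and bounded from $\ghu(M)$ to $L^1(M)$, which is the claim. The only mildly delicate point — and the step I would be most careful about — is the measurability and the interchange of supremum and integral in the pointwise domination of $\opK_{2,*}$; this is handled by noting that $(t,x,y)\mapsto K_2(t,x,y)$ is measurable, so $N(x,y) = \sup_{t\in(0,1]\cap\mathbb{Q}}|K_2(t,x,y)|$ is measurable (using continuity in $t$, or simply working with the essential supremum / a countable dense set of times), after which the domination and Tonelli's theorem are routine.
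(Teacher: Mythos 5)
Your proposal is correct and follows essentially the same route as the paper: decompose $K = K_1 + K_2$ per Definition \ref{def: AI}, invoke Proposition \ref{p: limmaxop} for the LAI part, and observe that \eqref{eq:ai_l1bound} gives $L^1(M)$-boundedness of the maximal operator associated to $K_2$ via the pointwise domination by the kernel $\sup_{t\in(0,1]}|K_2(t,\cdot,\cdot)|$ and Tonelli. The paper states this in one line; your write-up merely fills in the same routine details (including the measurability caveat), so nothing further is needed.
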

\begin{proof}
By Definition \ref{def: AI}, we can write $K = K_1 + K_2$, where $K_1 \in \aiV$, while $K_2$ satisfies \eqref{eq:ai_l1bound}. Since the latter bound implies the $L^1$-boundedness of the maximal operator associated to $K_2$, the desired boundedness result follows by applying Proposition \ref{p: limmaxop} to $K_1$.
\end{proof}

For each $\la>0$, define the \emph{local Riesz-type potential} $\rzP_\la$ by the rule 
\[
\rzP_\la f(x)
\defeq \int_{B_\la(x)}\, \frac{|f(y)|}{d(x,y)^{n-1}} \wrt\mu(y)
\quant f \in L^1(M)\,. 
\]
From the uniform local $n$-Ahlfors condition, it readily follows that there exists a positive constant $C$, depending on $\la$, such that 
\begin{equation} \label{f: bound RieszPot}
\norm{\rzP_\la f}{1}
\leq C \norm{f}{1}
\quant f\in L^1(M)\,.
\end{equation}

\begin{lemma}\label{l: Lipschitz}
Let $L,\lambda>0$. Let $\phi: M \to \CC$ be such that
\[
\mod{\phi(x) - \phi(y)} \leq L\, d(x,y)
\]
for all $x,y \in M$ with $d(x,y) \leq \lambda$. 
Let $K : (0,1] \times M \times M \to [0,\infty)$ satisfy \ref{en:lai_supp} and \ref{en:lai_upper} of Definition~\ref{def: LAI} for some $C_1 > 0$. Then 
\[
\mod{\opK_*(\phi f)}
\leq \mod{\phi} \, \opK_*f + L\,C_1\, \rzP_\la f
\quant f \in L^1(M). 
\]
\end{lemma}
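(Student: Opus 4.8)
The plan is to exploit the support condition \ref{en:lai_supp}, which confines the kernel $K(t,x,\cdot)$ to the ball $B_\lambda(x)$, and to perform the standard splitting $\phi(y) = \phi(x) + (\phi(y)-\phi(x))$ inside the integral defining $\opK_t(\phi f)$. The first summand reproduces $\phi(x)\,\opK_t f(x)$, while the second is an error term which I will control pointwise, uniformly in $t$, by the local Riesz-type potential $\rzP_\lambda f$.

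More precisely, I would first observe that for every $x \in M$ and $t \in (0,1]$ all the relevant integrals converge absolutely: by \ref{en:lai_supp} only values $y \in B_\lambda(x)$ matter, this ball has finite measure (as $M$ has at most exponential growth), $\phi$ is bounded on it (being $L$-Lipschitz at scale $\lambda$, whence $|\phi(y)| \leq |\phi(x)| + L\lambda$ there), and $K(t,x,\cdot)$ is bounded by $C_1 t^{-n}$ by \ref{en:lai_upper}. Thus the identity
\[
\opK_t(\phi f)(x) = \phi(x)\,\opK_t f(x) + \int_{B_\lambda(x)} K(t,x,y)\,(\phi(y)-\phi(x))\,f(y)\wrt\mu(y)
\]
is legitimate for all such $x$ and $t$.

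The key elementary estimate is then $d(x,y)\,K(t,x,y) \leq C_1\, d(x,y)^{1-n}$ for all $x,y\in M$ and $t\in(0,1]$. Rewriting \ref{en:lai_upper} as $K(t,x,y) \leq C_1 t^{\gamma}(t+d(x,y))^{-n-\gamma}$, this amounts to $t^{\gamma} d(x,y)^{n} \leq (t+d(x,y))^{n+\gamma}$, which holds trivially since $t \leq t+d(x,y)$ and $d(x,y) \leq t+d(x,y)$. Combining this with the Lipschitz bound $|\phi(y)-\phi(x)| \leq L\,d(x,y)$ valid for $y \in B_\lambda(x)$, the error term above is at most
\[
L\int_{B_\lambda(x)} d(x,y)\,K(t,x,y)\,|f(y)|\wrt\mu(y) \leq L\,C_1 \int_{B_\lambda(x)} \frac{|f(y)|}{d(x,y)^{n-1}}\wrt\mu(y) = L\,C_1\,\rzP_\lambda f(x),
\]
independently of $t$. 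Hence $|\opK_t(\phi f)(x)| \leq |\phi(x)|\,\opK_* f(x) + L\,C_1\,\rzP_\lambda f(x)$ for every $t\in(0,1]$, and taking the supremum over $t$ yields the claim.

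I do not anticipate a genuine obstacle: the argument is a one-line kernel splitting plus an elementary pointwise inequality. The only steps meriting a moment's attention are the absolute convergence of the integrals (which is why the support condition \ref{en:lai_supp} is hypothesised, together with local finiteness of $\mu$) and the verification of $d(x,y)\,K(t,x,y) \leq C_1\,d(x,y)^{1-n}$, which is precisely what matches the exponent $n-1$ in the definition of $\rzP_\lambda$. It is worth noting that neither the H\"older condition \ref{en:lai_lip} nor the restricted range \eqref{eq:LAI_range} is used: only \ref{en:lai_supp} and \ref{en:lai_upper} enter, in accordance with the hypotheses of the lemma.
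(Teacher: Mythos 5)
Your proposal is correct and follows essentially the same route as the paper: the same splitting $\phi(y)=\phi(x)+(\phi(y)-\phi(x))$, the support condition \ref{en:lai_supp} to restrict to $B_\lambda(x)$ where the Lipschitz bound applies, and the same elementary kernel estimate $d(x,y)\,K(t,x,y)\leq C_1\,d(x,y)^{1-n}$ (the paper phrases it as $\sup_{\al\geq 0}\al^n/(1+\al)^{n+\ga}<1$), followed by taking the supremum in $t$. No issues.
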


\begin{proof}
Take $f$ in $L^1(M)$, $x\in M$ and $t\in (0,1]$. 
Since $\phi$ is Lipschitz and $K$ satisfies the estimate in Definition~\ref{def: LAI}~\ref{en:lai_supp},
\[
\begin{split}
\mod{\opK_t(\phi f)(x)} 
	&  \leq  \mod{\phi(x)}\, \mod{\opK_tf(x)} + \left|\int_M K(t,x,y)[\phi(y)-\phi(x)]\, f(y)\wrt\mu(y) \right|\\
        &  \leq \mod{\phi(x)}\, \mod{\opK_tf(x)} \\
	   &\quad+  LC_1\int_{B_\la(x)}  d(x,y) \, \sup_{t\in (0,1]} t^{-n} \left(1+\frac{d(x,y)}{t}\right)^{-n-\gamma} |f(y)|\wrt \mu(y)\\
	&\leq |\phi(x)| \,\opK_*f(x) \, +LC_1 \, \rzP_\la f(x),
\end{split}
\]
because $\sup_{\al\in [0,\infty)} \al^{n}/(1+\al)^{n+\gamma} < 1$.  
By taking the supremum of both sides with respect to $t$ in $(0,1]$, we obtain the required estimate.
\end{proof}

We now show that, for any AI $K$ on $M$, the local Hardy space $\ghu(M)$ can be characterised as the space of all $f$ in $L^1(M)$ such that $\opK_*f$
is in $L^1(M)$.  Our proof hinges on the fact that a similar characterisation is already known
in the Euclidean case \cite{G}.  The main idea is to show that if $\phi$ is a suitable cutoff 
function in a neighbourhood of a point $p$ in $M$, then the composition of $\phi f$ with an 
appropriate harmonic co-ordinate map
$\eta_p^{-1}$ belongs to $\ghu(\RR^n)$.  The latter property is verified by showing that the maximal function 
of $(\phi f)\circ \eta_p^{-1}$ with respect to an appropriate AI on $\RR^n$ is in $L^1(\RR^n)$.  Part of the localisation argument is modelled over the proof of \cite[Proposition~2.2]{T}, where however a grand maximal function is considered instead of an arbitrary AI, and much more restrictive conditions on $M$ are assumed.

\bigskip

Let $Q>1$ be fixed. Due to our assumptions on $M$, as discussed in Section \ref{s: Background material}, we can find $R_0 \in (0,1]$ such that, for each $p \in M$, there exists smooth coordinates $\eta_p : B_{R_0}(p) \to \RR^n$ centred at $p$ such that
\begin{equation}\label{f: control of dist}
|X-Y|/Q \leq d(x,y) \leq Q|X-Y|
\end{equation}
for all $x,y \in B_{R_0}(p)$, where $X=\eta_p(x)$ and $Y=\eta_p(y)$, and moreover the Riemannian volume element $\mu_p$ in the coordinates $\eta_p$ satisfies
\begin{equation}\label{f: control of vol}
Q^{-n} \leq \mu_p \leq Q^n
\end{equation}
pointwise. 
Set $U_p \defeq \eta_{p} (B_{R_0}(p))$.  A direct consequence of \eqref{f: control of dist}
is that 
\[
B_{R_0/Q}^{\RR^n} (0) \subseteq U_p \subseteq B_{R_0 Q}^{\RR^n} (0).
\] 

Let $\bBall \defeq B_{R_0/2Q}^{\RR^n} (0)$.
Let $\chi : \RR^n \to [0,1]$ be a smooth cutoff function on $\RR^n$ 
supported in $\bBall$ that is equal to $1$ on $(1/2)\bBall$, and set $\Xi (X,Y) = \chi(X)\, \chi(Y)$.

Let $S$ be the $R_0/Q$-LAI on $\RR^n$ defined (similarly to \eqref{eq:LAI_example}) by
\[
S(t,X,Y) = t^{-n} \zeta((X-Y)/t),
\]
where $\zeta \in C^\infty_c(\RR^n)$ is nonnegative, supported in $B^{\RR^n}_{R_0/Q}(0)$ and satisfying
\begin{equation}\label{eq:RnLAIcond}
0 \leq \zeta(X) \leq 1, \qquad |\zeta(X)-\zeta(Y)| \leq |X-Y|^\gamma, \qquad \zeta(0)> 0.
\end{equation}

 Given a kernel $K : (0,1] \times M \times M \to [0,\infty)$, for all $p \in M$ we define $K_p^\sharp: (0,1]\times \RR^n\times \RR^n\to [0,\infty)$ by the rule 
\begin{equation}\label{f: Kpsharp}
K_p^\sharp(t,X,Y)
\defeq \Xi(X,Y)\, K(t,\eta_{p}^{-1}(X),\eta_{p}^{-1}(Y)) + \big[1-\Xi(X,Y)\big]\, S(t,X,Y).  
\end{equation}
Notice that $K_p^\sharp = S$ on $(0,1] \times (\bBall\times \bBall\big)^c$.

\begin{lemma} \label{l: modified Euclidean AI}
Suppose that $K$ is a LAI on $M$. Then, for all $p \in M$,
the function $K_p^\sharp$ defined in \eqref{f: Kpsharp} is a $R_0/Q$-LAI on $\RR^n$, with constants $C_1,C_2,C_3$ in Definition \ref{def: LAI} independent of $p \in M$.
\end{lemma}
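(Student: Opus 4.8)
The plan is to verify the four conditions of Definition \ref{def: LAI} for $K_p^\sharp$ one at a time, using the fact that $K_p^\sharp$ is built by patching together the pullback $\Xi(X,Y)\, K(t,\eta_p^{-1}(X),\eta_p^{-1}(Y))$ and the fixed Euclidean LAI $S$. The support condition \ref{en:lai_supp} is immediate: if $|X-Y| > R_0/Q$, then $\Xi(X,Y)\,S(t,X,Y) = S(t,X,Y) = 0$ since $\zeta$ is supported in $B^{\RR^n}_{R_0/Q}(0)$, and moreover $|X-Y| > R_0/Q$ forces at least one of $X,Y$ to lie outside $\bBall$ (as $\bBall$ has diameter $R_0/Q$), so $\Xi(X,Y) = 0$ as well; hence $K_p^\sharp(t,X,Y) = 0$. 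The nondegeneracy condition \ref{en:lai_nondeg} at the diagonal is also easy: $\Xi(X,X) = \chi(X)^2 \in [0,1]$, so $K_p^\sharp(t,X,X)$ is a convex combination of $K(t,\eta_p^{-1}(X),\eta_p^{-1}(X)) \geq C_2 t^{-n}$ (using \ref{en:lai_nondeg} for $K$) and $S(t,X,X) = t^{-n}\zeta(0) > 0$; both are bounded below by a multiple of $t^{-n}$, so the convex combination is too.

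For the upper bound \ref{en:lai_upper}, I would again treat $K_p^\sharp$ as a convex combination: the pullback term is controlled using \ref{en:lai_upper} for $K$ together with \eqref{f: control of dist}, which gives $d(\eta_p^{-1}(X),\eta_p^{-1}(Y)) \geq |X-Y|/Q$ whenever $X,Y \in U_p$ (and note $\Xi(X,Y) \neq 0$ forces $X,Y \in \bBall \subseteq U_p$); comparing $(1+d/t)^{-n-\gamma}$ with $(1+|X-Y|/(Qt))^{-n-\gamma}$ costs only a factor $Q^{n+\gamma}$. Since $S$ already satisfies the bound with constant $1$, and convex combinations preserve such pointwise bounds, we get \ref{en:lai_upper} with $C_1$ depending only on $Q$ and the original $C_1$. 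The only subtlety is that both pieces must be compared to the \emph{same} expression $t^{-n}(1+|X-Y|/t)^{-n-\gamma}$, which is fine up to the $Q$-dependent constants.

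The Lipschitz condition \ref{en:lai_lip} is the main obstacle, as is typical for such patching arguments. Writing $K_p^\sharp(t,X,Y) - K_p^\sharp(t,X,Z)$ with the product-rule decomposition
\[
\Xi(X,Y)\bigl[A(Y) - A(Z)\bigr] + \bigl[\Xi(X,Y)-\Xi(X,Z)\bigr] A(Z) + \bigl[1-\Xi(X,Y)\bigr]\bigl[S(t,X,Y)-S(t,X,Z)\bigr] - \bigl[\Xi(X,Y)-\Xi(X,Z)\bigr] S(t,X,Z),
\]
where $A(\cdot) = K(t,\eta_p^{-1}(X),\eta_p^{-1}(\cdot))$, I would estimate each of the four terms separately in the range \eqref{eq:LAI_range}, i.e.\ $4|Y-Z| \leq t + |X-Y|$. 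The terms involving the difference $A(Y)-A(Z)$ and $S(t,X,Y)-S(t,X,Z)$ are handled by the Lipschitz/H\"older conditions \ref{en:lai_lip} for $K$ and $S$ respectively, again transferring between $d$ and Euclidean distance via \eqref{f: control of dist} (one must check that the range condition $4d(\eta_p^{-1}(Y),\eta_p^{-1}(Z)) \leq t + d(\eta_p^{-1}(X),\eta_p^{-1}(Y))$ is, up to a constant depending on $Q$, implied by the Euclidean range condition; this may require invoking Lemma \ref{lem:lai_lip_range} to absorb the distortion constant $Q$). The terms involving the difference $\Xi(X,Y) - \Xi(X,Z) = \chi(X)(\chi(Y)-\chi(Z))$ are controlled because $\chi$ is smooth with compact support, hence globally Lipschitz, so $|\Xi(X,Y)-\Xi(X,Z)| \leq C|Y-Z| \leq C|Y-Z|^\gamma \cdot C'$ (since distances are bounded on the relevant region), and $A(Z), S(t,X,Z)$ are bounded by $C_1 t^{-n}(1+|X-Y|/t)^{-n-\gamma}$ via the already-established upper bound (after another routine adjustment of $|X-Z|$ versus $|X-Y|$ in the range \eqref{eq:LAI_range}). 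Collecting the four estimates yields \ref{en:lai_lip} with a constant $C_3$ depending only on $Q$, $R_0$, $C_1$, $C_3$, and the fixed cutoff $\chi$ and profile $\zeta$ — crucially not on $p$, since the distortion bounds \eqref{f: control of dist}–\eqref{f: control of vol} are uniform in $p$ by our assumptions on $M$.
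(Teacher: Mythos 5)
Your proof is correct and essentially the paper's: conditions \ref{en:lai_supp}--\ref{en:lai_nondeg} are handled exactly as you describe, and for \ref{en:lai_lip} the paper likewise uses a product-rule decomposition combining the H\"older bounds for $K$ and $S$ with the Lipschitz cutoff (those cutoff-difference estimates are packaged there as Lemma \ref{lem:lai_cutoff} rather than redone inline). One remark: the step you flag as ``may require invoking Lemma \ref{lem:lai_lip_range}'' is genuinely needed and is in fact the paper's first move in the proof of \ref{en:lai_lip} — reducing to the range $\tfrac{4Q^2}{R_0}|Y-Z| \leq t+\min\{|X-Y|,|X-Z|\}$ both absorbs the distortion constant $Q$ when transferring the range condition to $M$ and forces $X,Y,Z \in 2\bBall \subseteq U_p$, so that the pullback terms $A(Y)$, $A(Z)$ are actually well defined.
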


\begin{proof}
For the sake of notational simplicity, in this proof we write $x$, $y$ and $z$ in place of $\eta_p^{-1}(X)$, $\eta_p^{-1}(Y)$, $\eta_p^{-1}(Z)$ whenever $X$, $Y$ and $Z$ are in $U_p$. 

Let us first check that $K_p^\sharp$ satisfies conditions \ref{en:lai_supp}, \ref{en:lai_upper}, \ref{en:lai_nondeg} of Definition \ref{def: LAI} for all $(t,X,Y) \in (0,1] \times \RR^n \times \RR^n$. If $(X,Y) \in (\bBall \times \bBall)^c$, then $K_p^\sharp(t,X,Y) = S(t,X,Y)$ and we are done because $S$ is a $R_0/Q$-LAI. If not, then $X,Y \in U_p$ and we can use that
\[
0 \leq K_p^\sharp(t,X,Y)
\leq \chr_{\bBall}(X) \chr_{\bBall}(Y) K(t,x,y) + S(t,X,Y)
\]
together with \eqref{f: control of dist} to deduce conditions \ref{en:lai_supp} and \ref{en:lai_upper} for $K_p^\sharp$ from the corresponding conditions for $K$ and $S$ and the fact that $|X-Y| < R_0/Q$ if both $X,Y \in \bBall$; similarly, since both $K$ and $S$ satisfy \ref{en:lai_nondeg}, we deduce that
\[
K_p^\sharp(t,X,X) = \Xi(X,Y) K(t,x,x) + (1-\Xi(X,X)) S(t,X,X) \geq C_2 t^{-n}
\]
where $C_2>0$  is the minimum of the corresponding constants for $K$ and $S$.

As for condition \ref{en:lai_lip}, by Lemma \ref{lem:lai_lip_range} it is enough to check it for all $t \in (0,1]$ and $X,Y,Z \in \RR^n$ such that
\begin{equation}\label{eq:lift_range}
\frac{4 Q^2}{R_0} |Y-Z| \leq t+\min\{|X-Y|,|X-Z|\}.
\end{equation}
If both $(X,Y),(X,Z) \notin \bBall \times \bBall$, then again we are done, since $S$ satisfies condition \ref{en:lai_lip}. If not, then $X$ and at least one of $Y,Z$ belong to $\bBall$, but then \eqref{eq:lift_range} implies that
\[
|Y-Z| \leq \frac{R_0}{4Q^2} \left(1 + \frac{R_0}{Q} \right) \leq \frac{R_0}{2 Q^2} 
\]
and consequently all of $X,Y,Z \in 2\bBall \subseteq U_p$. Hence we can write
\[\begin{split}
|K_p^\sharp(t,X,Y)-K_p^\sharp(t,X,Z)| &\leq |\Xi(X,Y) K(t,x,y)- \Xi(X,Z) K(t,x,z)| \\
&\quad + |\Xi(X,Y) S(t,X,Z)- \Xi(X,Z) S(t,X,Z)| \\
&\quad + |S(t,X,Z)- S(t,X,Z)|
\end{split}\]
and consider the three summands in the right-hand side separately. Since $S$ is a LAI on $\RR^n$, by Lemma \ref{lem:lai_cutoff} both $S$ and $(t,X,Y) \mapsto \Xi(X,Y) S(t,X,Y)$ satisfy condition \ref{en:lai_lip}, and the last two summands are dealt with. As for the first summand, in light of \eqref{f: control of dist}, the condition \eqref{eq:lift_range} implies that
\[
\frac{4}{R_0} d(y,z) \leq t+\min\{d(x,y),d(x,z)\},
\]
and we are reduced to checking that the function $(t,x,y) \mapsto \Xi(\eta_p(x),\eta_p(y)) K(t,x,y)$ satisfies condition \ref{en:lai_lip} on $M$. This however follows again from Lemma \ref{lem:lai_cutoff}, since $K$ is a LAI on $M$ and $x \mapsto \chi(\eta_p(x))$ is bounded and Lipschitz on $M$ (uniformly in $p \in M$) by \eqref{f: control of dist}.
\end{proof}

\begin{lemma}\label{lem:discretisation}
Let $\kappa > 0$. Then there exists a collection $\Discr$ of points of $M$ with the following properties.
\begin{enumerate}[label=(\roman*)]
\item\label{en:discr_cutoff}
$\{B_{\kappa}(p)\}_{p \in \Discr}$ is a locally finite cover of $M$; moreover,
for all $p \in \Discr$, there exist Lipschitz functions $\phi_p : M \to [0,1]$, with Lipschitz constant independent of $p$, 
supported in $B_{\kappa}(p)$, such that $\sum_{p\in \Discr}\phi_p=1$.

\item\label{en:discr_cutoff2}
For all $p \in \Discr$, there exist Lipschitz functions $\wt \phi_p : M \to [0,1]$, with Lipschitz constant independent of $p$, 
supported in $B_{2\kappa}(p)$, such that $\wt \phi_p=1$ on $B_{\kappa}(p)$.

\item\label{en:discr_partition}
$\Discr$ can be partitioned into finitely many subsets $\Discr_1,\dots,\Discr_N$ such that,
if $p$ and $p'$ are distinct points in $\Discr_{\ell}$ for some $\ell$ in $\{1,\ldots,N\}$, then $d(p,p') \geq 4\kappa$.
\end{enumerate}
\end{lemma}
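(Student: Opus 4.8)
The plan is to take $\Discr$ to be a maximal $(\kappa/2)$-separated subset of $M$, and to deduce all three properties from the uniform local $n$-Ahlfors condition \eqref{f: local Ahlfors} together with a standard partition-of-unity construction and a graph-colouring argument.

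First I would fix, via Zorn's lemma, a subset $\Discr\subseteq M$ that is maximal among those whose points are pairwise at distance $\geq\kappa/2$. The single quantitative input needed is that, by \eqref{f: local Ahlfors} applied at scale $s=10\kappa$, there is an integer $\mathcal N$, depending only on $\kappa$, $n$ and $D_{10\kappa}$, such that $\Card(\Discr\cap B_R(x))\leq\mathcal N$ for every $x\in M$ and every $R\in(0,8\kappa]$: indeed the balls $\{B_{\kappa/4}(p)\}_{p\in\Discr\cap B_R(x)}$ are pairwise disjoint and contained in $B_{R+\kappa/4}(x)$, so comparing volumes via \eqref{f: local Ahlfors} gives $\Card(\Discr\cap B_R(x))\leq D_{10\kappa}^2\,(4R/\kappa+1)^n$. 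Maximality of $\Discr$ forces $d(x,\Discr)<\kappa/2$ for every $x\in M$, so that $\{B_{\kappa/2}(p)\}_{p\in\Discr}$, and a fortiori $\{B_\kappa(p)\}_{p\in\Discr}$, covers $M$; moreover any ball of radius $2\kappa$ meets at most $\mathcal N$ of the sets $B_{2\kappa}(p)$, which gives the (uniform) local finiteness needed in \ref{en:discr_cutoff} and \ref{en:discr_cutoff2}.

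Next I would produce the cutoffs by the usual recipe. For $p\in\Discr$ set $\psi_p(x)=\max\{0,\min\{1,\,3-\tfrac4\kappa\,d(x,p)\}\}$, a $(4/\kappa)$-Lipschitz function equal to $1$ on $B_{\kappa/2}(p)$ and vanishing outside $B_{3\kappa/4}(p)$. Since the balls $B_{\kappa/2}(p)$ cover $M$, the sum $\Sigma\defeq\sum_{q\in\Discr}\psi_q$ satisfies $\Sigma\geq1$ on $M$; near any point it has at most $\mathcal N$ nonzero terms, hence it is Lipschitz with constant $\leq4\mathcal N/\kappa$. Then $\phi_p\defeq\psi_p/\Sigma$ is supported in $\overline{B_{3\kappa/4}(p)}\subseteq B_\kappa(p)$, satisfies $\sum_{p\in\Discr}\phi_p=1$, and a routine quotient estimate---using $\Sigma\geq1$ and $0\leq\psi_p\leq1$, and treating the cases $d(x,y)\leq\kappa/4$ and $d(x,y)>\kappa/4$ separately---bounds its Lipschitz constant by $4(\mathcal N+1)/\kappa$, which depends only on $\kappa$, $n$ and $D_{10\kappa}$; this proves \ref{en:discr_cutoff}. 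For \ref{en:discr_cutoff2} I would simply take $\wt\phi_p(x)=\max\{0,\min\{1,\,3-\tfrac2\kappa\,d(x,p)\}\}$, which is $(2/\kappa)$-Lipschitz, equal to $1$ on $B_\kappa(p)$, and supported in $\overline{B_{3\kappa/2}(p)}\subseteq B_{2\kappa}(p)$.

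Finally, for \ref{en:discr_partition} I would form the graph $G$ on the vertex set $\Discr$ in which two distinct points $p,p'$ are joined by an edge precisely when $d(p,p')<4\kappa$. By the counting above (with $R=4\kappa$), every vertex of $G$ has degree at most $\mathcal N-1$, so every finite subgraph of $G$ is properly colourable with $N\defeq\mathcal N$ colours; by the De Bruijn--Erd\H{o}s theorem $G$ itself admits a proper $N$-colouring (alternatively, one colours $\Discr$ greedily along a well-ordering, since each point has fewer than $N$ neighbours). Taking $\Discr_1,\dots,\Discr_N$ to be the colour classes yields the required partition, because two points of the same class are non-adjacent in $G$ and hence, when distinct, at distance $\geq4\kappa$. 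The argument is elementary, and the only points requiring a little care are the open-versus-closed ball bookkeeping in the support conditions, handled by inserting the intermediate radii $3\kappa/4$ and $3\kappa/2$, and the proper colouring of the possibly infinite graph $G$, which is why I invoke De Bruijn--Erd\H{o}s; the uniformity in $p$ of all the constants involved is automatic, being inherited from the uniformity in \eqref{f: local Ahlfors}.
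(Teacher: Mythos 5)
Your proposal is correct and follows essentially the same route as the paper: a maximal separated set, the uniform local Ahlfors condition to get a uniform bound on the number of points of $\Discr$ in balls of fixed radius, and normalised distance-based bump functions for the partition of unity and the cutoffs $\wt\phi_p$. The only divergence is in part (iii), where the paper recursively extracts maximal $4\kappa$-separated subsets of $\Discr$ and uses the same counting bound to show the recursion terminates, while you phrase it as a proper colouring of a bounded-degree graph (greedy along a well-ordering, or De Bruijn--Erd\H{o}s); both arguments are equally elementary and rest on the same multiplicity estimate, so this is a cosmetic rather than substantive difference.
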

\begin{proof}
Take any set $\Discr$ which is maximal with respect to the property 
\[
\min \big\{d(p,p') \tc p,p' \in \Discr, \, p \neq p' \big\} \geq \kappa/3.
\]
Then, by maximality,
\begin{equation}\label{eq:net}
\inf_{p \in \Discr} d(x,p) < \kappa/3   \quant x \in M.
\end{equation}
Moreover, a straightforward consequence of the uniform local Ahlfors condition (see, for instance, \cite[Lemma 1]{MVo}) is that 
\begin{equation}\label{eq:fin_card}
\sup_{x \in M} \Card(\Discr \cap B_R(x)) < \infty   \quant R>0.
\end{equation}

From \eqref{eq:net} and \eqref{eq:fin_card} it follows that $\{B_{\kappa}(p)\}_{p \in \Discr}$ is a locally finite cover of $M$.
Fix a smooth function $\psi:[0,\infty) \to [0,1]$ 
such that $\psi=1$ on $[0,\kappa/2]$ and $\psi=0$ on $(3\kappa/4,\infty)$. 
Then for every $p\in \Discr$ define $\psi_p=\psi\big(d(\cdot,p)\big)$
and $\phi_p= \psi_p/\sum_{q\in \Discr}\psi_{q}$. From \eqref{eq:net} it is clear that the locally finite sum $\sum_{q\in \Discr}\psi_{q}$ is bounded above and below by positive constants and is Lipschitz. This readily implies that the $\phi_p$ have the desired properties, and part \ref{en:discr_cutoff} is proved.

As for part \ref{en:discr_cutoff2},
take a smooth function $\wt\psi : [0,\infty) \to [0,1]$
such that $\wt\psi=1$ on $[0,\kappa]$ and $\wt\psi=0$ on $(3\kappa/2,\infty)$. Then for every $p \in \Discr$ define $\wt\phi_p = \wt\psi(d(\cdot,p))$.

Finally, for part \ref{en:discr_partition},
take as $\Discr_1$ any subset $\Discr'$ of $\Discr$ which is maximal with respect to the property
\begin{equation}\label{eq:discr_dist2}
\min \big\{d(p,p') \tc p,p' \in \Discr', \, p \neq p' \big\} \geq 4\kappa.
\end{equation}
Recursively, define $\Discr_{k+1}$ as any subset $\Discr'$ of $\Discr \setminus (\Discr_1 \cup \dots \cup \Discr_k)$ which is maximal with respect to \eqref{eq:discr_dist2}. In order to conclude, we need to show that this procedure terminates after finitely many steps, that is, $\Discr_k = \emptyset$ for some integer $k \geq 1$.

Indeed, if $\Discr_k \neq \emptyset$, then, given any $p \in \Discr_k$, by maximality of $\Discr_1,\dots,\Discr_{k-1}$, the ball $B_{4\kappa}(p)$ intersects each of $\Discr_1,\dots,\Discr_{k-1}$, and therefore $B_{4\kappa}(p)$ contains $k$ distinct points of $\Discr$ (including $p$).
However by \eqref{eq:fin_card}
the cardinality of the intersection $\Discr \cap B_{4\kappa}(x)$ is bounded by a constant independent of $x \in M$. Hence $\Discr_k \neq \emptyset$ only for finitely many integers $k$.
\end{proof}

\begin{theorem}\label{t: carmax}
Suppose that $K$ is an AI on $M$. 
If $f$ is in $L^1(M)$ and $\opK_*f$ is in $L^1(M)$, then $f$ is in $\ghu(M)$ and 
\[
	\norm{f}{\ghu}
	\leq C \left[ \norm{f}{1}+\norm{\opK_*f}{1} \right] \,,
\]
where the constant $C$ only depends on $M$ and $K$.
\end{theorem}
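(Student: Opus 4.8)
The plan is to localise the problem to harmonic coordinate charts and thereby reduce it to the known maximal characterisation of $\ghu(\RR^n)$, exploiting that under the bounded geometry assumptions every relevant constant is uniform in the basepoint. As a preliminary reduction, writing $K = K_1 + K_2$ as in Definition \ref{def: AI} and using the pointwise inequality $(\opK_1)_*f \leq \opK_*f + (\opK_2)_*f$ together with the $L^1$-boundedness of $(\opK_2)_*$ coming from \eqref{eq:ai_l1bound} (Tonelli), one sees that $(\opK_1)_*f \in L^1(M)$ with $\|(\opK_1)_*f\|_1 \leq \|\opK_*f\|_1 + C\|f\|_1$, so it suffices to treat a $\lambda$-LAI $K \in \aiV$ (and, if convenient, Lemma \ref{lem:ai_decomp} lets one take $\lambda$ small). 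I would then fix $Q > 1$ and the associated radius $R_0$, charts $\eta_p$, cutoff $\chi$ and Euclidean LAI $S$ as in the discussion preceding Lemma \ref{l: modified Euclidean AI}, and apply Lemma \ref{lem:discretisation} with a constant $\kappa$ small enough (depending only on $R_0,Q$) that $\eta_p(B_\kappa(p))$ lies well inside $\{\chi \equiv 1\}$; this gives a point set $\Discr$ and a Lipschitz partition of unity $\{\phi_p\}_{p \in \Discr}$ subordinate to $\{B_\kappa(p)\}$, with uniform Lipschitz constant and uniform finite overlap (see \eqref{eq:fin_card}). For each $p$ I would set $f_p^\sharp(X) \defeq \mu_p(X)\,(\phi_p f)(\eta_p^{-1}(X))$ for $X \in U_p$ and $f_p^\sharp \defeq 0$ otherwise, where $\mu_p$ is the Riemannian volume density in the chart (so $Q^{-n} \leq \mu_p \leq Q^n$ by \eqref{f: control of vol}). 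The weight $\mu_p$ is inserted deliberately: by the change of variables $X = \eta_p(x)$ it yields at once $\|f_p^\sharp\|_{L^1(\RR^n)} = \|\phi_p f\|_{L^1(M)}$ and the identity $\int_{\RR^n} K(t,\eta_p^{-1}(X),\eta_p^{-1}(Y))\, f_p^\sharp(Y)\wrt Y = \opK_t(\phi_p f)(\eta_p^{-1}(X))$, and it will cancel the Jacobian weight $\rho$ of Lemma \ref{lem:atom_ion} when the argument is later run backwards.

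Let $K_p^\sharp$ be the modified Euclidean kernel \eqref{f: Kpsharp}, which by Lemma \ref{l: modified Euclidean AI} is a $(R_0/Q)$-LAI on $\RR^n$ with constants independent of $p$, hence, after a harmless rescaling, an AI on $\RR^n$ in the sense of Definition \ref{def: AI Ahlfors} with parameters uniform in $p$. Since $K_p^\sharp = S$ off $\bBall \times \bBall$ and $\chi \equiv 1$ on $\supp f_p^\sharp$, the explicit form of $K_p^\sharp$ gives, for the associated maximal operator $\opK^\sharp_{p,*}$, the pointwise bound
\[
\opK^\sharp_{p,*} f_p^\sharp \leq \chi \cdot \bigl( \opK_*(\phi_p f) \circ \eta_p^{-1} \bigr) + (1-\chi)\, \opS_* f_p^\sharp ,
\]
the first summand being $\equiv 0$ off $\bBall$. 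Integrating in $X$: the first summand, after the change of variables $X=\eta_p(x)$ and an application of Lemma \ref{l: Lipschitz} to the uniformly Lipschitz $\phi_p$, is controlled by $\int_{B_\kappa(p)} \opK_*f \wrt\mu + \int_{B_{R_0/2}(p)} \rzP_\lambda f \wrt\mu$; the second summand is supported in a bounded Euclidean region lying at a fixed positive distance from $\supp f_p^\sharp$ (here the compact support of $\zeta$ enters), so only times $t$ bounded below contribute and it is dominated by $\|f_p^\sharp\|_1 = \|\phi_p f\|_1$. Applying Theorem \ref{thm:main} (namely \eqref{eq:lq_grand_maxk} with $q=1$, whose constant is uniform in $p$ by the remark following the theorem) to $f_p^\sharp$ and $K_p^\sharp$, and then the grand maximal characterisation of $\ghu(\RR^n)$ in terms of $\gamma$-H\"older cutoffs (Goldberg \cite{G}), I would conclude
\[
\|f_p^\sharp\|_{\ghu(\RR^n)} \leq C\Bigl( \|\phi_p f\|_1 + \int_{B_\kappa(p)} \opK_*f \wrt\mu + \int_{B_{R_0/2}(p)} \rzP_\lambda f \wrt\mu \Bigr)
\]
with $C$ independent of $p$.

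To transfer this back to $M$, I would take an ionic decomposition of $f_p^\sharp$ with ions supported in a fixed small ball $\widetilde B_p \subseteq \eta_p(B_{R_0/2}(p))$ and with $\ell^1$-norm of the coefficients comparable to $\|f_p^\sharp\|_{\ghu(\RR^n)}$ — such a decomposition exists because $f_p^\sharp$ is supported in a small ball (multiply any atomic decomposition by a fixed cutoff, which turns atoms into bounded multiples of ions) — and then apply Lemma \ref{lem:atom_ion}, whose proof applies verbatim to ions, with $\phi \equiv 1$, $\Psi = \eta_p$, and the manifolds $\RR^n, M$ in the roles of $M, M'$. Pushing the resulting ions forward through $\eta_p$, the weight $\mu_p$ in $f_p^\sharp$ cancels the Jacobian $\rho = \mu_p^{-1}$ and one obtains an ionic decomposition of $\phi_p f$ on $M$ whose coefficients have $\ell^1$-norm $\leq C \|f_p^\sharp\|_{\ghu(\RR^n)}$, with $C$ uniform in $p$ (because so is the constant $H$ of Lemma \ref{lem:atom_ion}); by \eqref{f: equivionicatomic} this gives $\phi_p f \in \ghu(M)$ with $\|\phi_p f\|_{\ghu} \leq C \|f_p^\sharp\|_{\ghu(\RR^n)}$. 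Summing over $p \in \Discr$, using $\sum_p \phi_p = 1$, the uniform finite overlap of $\{B_\kappa(p)\}$ and $\{B_{R_0/2}(p)\}$ from \eqref{eq:fin_card}, and the bound \eqref{f: bound RieszPot} for $\rzP_\lambda$, yields $\sum_{p \in \Discr}\|\phi_p f\|_{\ghu} \leq C(\|f\|_1 + \|\opK_*f\|_1)$; since $f = \sum_p \phi_p f$ with summable $\ghu(M)$-norms, assembling the corresponding ionic decompositions and invoking \eqref{f: equivionicatomic} once more gives $f \in \ghu(M)$ with $\|f\|_{\ghu} \leq C(\|f\|_1 + \|\opK_*f\|_1)$.

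I expect the genuine difficulty to be not any single estimate but the bookkeeping that makes the ``lift'' and the ``push-forward'' compatible. Because $K_p^\sharp$ does not carry the Riemannian volume factor, the change of variables between $M$ and the chart produces a Jacobian weight; choosing $f_p^\sharp$ with the factor $\mu_p$ built in is precisely what forces $\int K_p^\sharp(t,X,\cdot)\, f_p^\sharp$ to equal $\opK_t(\phi_p f) \circ \eta_p^{-1}$ exactly while still allowing the reverse passage (through the Jacobian $\rho = \mu_p^{-1}$ of Lemma \ref{lem:atom_ion}) to recover $\phi_p f$ with no leftover weight — which matters, since $\mu_p$ is only H\"older continuous, so multiplying a $\ghu$ function by it is not an innocuous operation. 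The second pervasive point is that every constant must be uniform in $p \in M$; this is exactly what the bounded geometry hypotheses buy us, through the uniform harmonic radius (hence the uniform LAI constants for $K_p^\sharp$ in Lemma \ref{l: modified Euclidean AI} and the uniform $H$ in Lemma \ref{lem:atom_ion}) together with the uniform estimates in Lemma \ref{lem:discretisation}.
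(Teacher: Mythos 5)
Your proposal is correct and follows essentially the same route as the paper: reduction to a $\lambda$-LAI, a uniform Lipschitz partition of unity at a small scale from Lemma \ref{lem:discretisation}, transfer to $\RR^n$ via harmonic coordinates with the Jacobian weight and the modified kernel $K_p^\sharp$ of Lemma \ref{l: modified Euclidean AI}, then Theorem \ref{thm:main} plus Goldberg, and finally a push-back to ions via Lemma \ref{lem:atom_ion} and \eqref{f: equivionicatomic}. The only deviations are bookkeeping ones — you sum over $p$ using the bounded overlap of the balls together with Lemma \ref{l: Lipschitz} applied to each $\phi_p$, rather than the paper's colouring $\Discr_1,\dots,\Discr_N$ and disjoint-support trick, and you cut off the Euclidean atomic decomposition instead of multiplying by $\wt\phi_p$ on the manifold side — and both variants are sound.
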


\begin{proof}
Let $\kappa = R_0/(8Q^2)$. In view of Lemma \ref{lem:ai_decomp} and the fact that the bound \eqref{eq:ai_l1bound} implies boundedness on $L^1(M)$ of the maximal operator associated to $K_2$, it is enough to prove the result when $K$ is a $\kappa$-LAI.

Let the collection of points $\Discr$, the sets $\Discr_1,\dots,\Discr_N$, and the families of Lipschitz cutoffs $\{\phi_p\}_{p \in \Discr}$ and $\{\wt\phi_p\}_{p \in \Discr}$ be defined as in Lemma \ref{lem:discretisation} corresponding to the above choice of $\kappa$. Recall that $\supp \phi_p \subseteq B_\kappa(p)$, and moreover $d(p,p') \geq 4\kappa$ for all $p,p' \in \Discr_\ell$ and $\ell \in \{1,\dots,N\}$. For every $\ell \in \{1,\dots,N\}$, define $\psi_\ell = \sum_{p \in \Discr_\ell} \phi_p$; note that the supports of the summands are pairwise disjoint, and in particular each $\psi_\ell$ is bounded and Lipschitz.

Let $f \in L^1(M)$.
Then $f=\sum_{\ell=1}^N (\psi_\ell f)$.  Since $\psi_\ell$ is a Lipschitz function, by Lemma~\ref{l: Lipschitz},
\[
\opK_*(\psi_\ell f)
\leq \psi_\ell \, \opK_* f + C\, \rzP_{\kappa} f.  
\]
Since $f$ is in $L^1(M)$, so is $\rzP_{\kappa} f$, by \eqref{f: bound RieszPot}.  Furthermore $\opK_*f$ is in $L^1(M)$ by assumption, whence so is $\opK_*(\psi_\ell f)$.  

Since $K$ is a $\kappa$-LAI and $\supp (\phi_p f) \subseteq B_{\kappa}(p)$ for all $p \in \Discr$, we deduce that $\supp \opK_*(\phi_p f) \subseteq B_{2\kappa}(p)$.
Consequently, for every $\ell \in \{1,\dots,N\}$, the supports of the functions $\opK_*(\phi_p f)$, $p\in \Discr_{\ell}$, are mutually disjoint.  
Therefore
\[
\sum_{p\in \Discr_\ell} \norm{\opK_*(\phi_p f)}{1} 
= \norm{\opK_* (\psi_\ell f)}{1}
\leq \norm{\opK_* f}{1} + C \norm{ f}{1},
\]
and
\begin{equation} \label{f: est cKstar phij f}
\sum_{p\in \Discr} \norm{\opK_*(\phi_p f)}{1} = \sum_{\ell=1}^N \sum_{p\in \Discr_\ell} \norm{\opK_*(\phi_p f)}{1} 
\leq C \left[ \norm{\opK_* f}{1} + \norm{ f}{1} \right],
\end{equation} 
In particular, $\opK_*(\phi_p f)$ is in $L^1(M)$ for each $p$ in $\Discr$. Similarly we see that
\begin{equation} \label{f: est phij f}
\sum_{p \in \Discr} \|\phi_p f\|_1 = \sum_{\ell = 1}^N \|\psi_\ell f \|_1 \leq C \, \| f\|_1.
\end{equation}

Recall that $\mu_p$ is the Riemannian volume element in the coordinates $\eta_p$, and set $g_p \defeq \mu_p\, \big[(\phi_p f) \circ \eta_{p}^{-1}\big]$.  Notice that the support of $g_p$
is contained in $\eta_{p}\big(B_{\kappa}(p)\big)$, which by \eqref{f: control of dist} is contained in 
$B_{\kappa Q}^{\RR^n}(0) = (1/4)\bBall$.
Consequently
\[\begin{split}
\int_{\RR^n} K^\sharp_p(t,X,Y) g_p(Y) \wrt Y &= \chi(X) \int_M K(t,\eta_p^{-1}(X),y) (\phi_p f)(y) \wrt \mu(y) \\
&\quad + \int_{\RR^n} (1-\Xi(X,Y)) \, S(t,X,Y) \, g_p(Y) \wrt Y.
\end{split}\]
Since the function $1-\Xi$ vanishes in $(1/2)\bBall\times (1/2)\bBall$, and $S$ is a $8\kappa Q$-LAI on $\RR^n$, we deduce that
\[
\sup_{t \in (0,1]}|(1-\Xi(X,Y)) \, S(t,X,Y) \, g_p(Y)| \leq C \, \chr_{[\kappa Q,8\kappa Q]}(|X-Y|) |g_p(Y)|,
\]
where $\kappa Q$ is the difference of the radii of $(1/2) \bBall$ and $(1/4) \bBall$. Therefore, if $\opK^\sharp_{p,*}$ denotes the maximal operator associated to $K^\sharp_p$, then from \eqref{f: control of vol} we deduce that
\[\begin{split}
\| \opK^\sharp_{p,*} g_p \|_{L^1(\RR^n)} &\leq C \left[ \|\opK_*(\phi_p f) \|_{L^1(M)} + \| g_p\|_{L^1(\RR^n)} \right] \\
&= C \left[ \|\opK_*(\phi_p f) \|_{L^1(M)} + \| \phi_p f\|_{L^1(M)} \right].
\end{split}\]

Recall that $K_p^\sharp$ is a LAI on $\RR^n$ by Lemma \ref{l: modified Euclidean AI}, with constants independent of $p \in \Discr$. Consequently, there exists a constant $A > 0$ (independent of $p \in \Discr$) such that $A \, K_p^\sharp$ satisfies Definition \ref{def: AI Ahlfors} on the $n$-Ahlfors space $\RR^n$, with constant $c$ independent of $p \in \Discr$.
Hence, by Theorem~\ref{thm:main},
\[
\norm{\maxG_\gamma g_p}{L^1(\RR^n)}
\leq C \norm{\opK_{p,*}^\sharp g_p}{L^1(\RR^n)}
\leq C\, \left[\norm{\opK_*(\phi_p f)}{L^1(M)} +  \norm{\phi_p f}{L^1(M)}\right],
\]
where $\maxG_\gamma$ is the grand maximal function on $\RR^n$ defined in \eqref{eq:def_grandmax}. On the other hand, if $\opS_*$ is the maximal operator associated to $S$, one clearly has the pointwise domination $\opS_* h \leq \maxG_\gamma h$ for all $h \in L^1_\loc(\RR^n)$, since the functions $S(t,X,\cdot) = t^{-n} \zeta((X-\cdot)/t)$ are in $\cutF_\gamma(X,t)$ by \eqref{eq:RnLAIcond}.
Hence it follows from \cite[Theorem 1]{G}
that $g_p$ is in $\ghu(\RR^n)$, and 
\[
\norm{g_p}{\ghu(\RR^n)}
\leq C \left[\norm{\opK_*(\phi_p f)}{1} +  \norm{\phi_p f}{1}\right].
\]
By the classical theory of $\ghu(\RR^n)$ (see, e.g., \cite[Lemma 5]{G}), the function $g_p$ admits an atomic decomposition 
$g_p =\sum_k \la_p^k a_p^k$, where $a_p^k$ are $\ghu$-atoms in $\RR^n$ at scale $\kappa Q$, and 
\begin{equation} \label{f: lajk}
\sum_k \,\bigmod{\la_p^k}
\leq C \norm{g_p}{\ghu}
\leq C  \left[\norm{\opK_*(\phi_p f)}{1}+\norm{\phi_p f}{1}\right].
\end{equation} 

Recall that $\wt \phi_p$ is a Lipschitz function on $M$ supported in $B_{2\kappa}(p)$ such that $\wt \phi_p=1$ 
on $B_{\kappa}(p)$.  Then 
\begin{equation} \label{f: phij fell}
\phi_p f
= \wt \phi_p\, \big(g_p/\mu_p\big)\circ\eta_{p}
= \wt \phi_p \, \sum_k \, \la_p^k \, \big(a_p^k/\mu_p\big)\circ \eta_{p} 
= \sum_k \, \la_p^k \, I_p^k,
\end{equation}
where $I_p^k \defeq \wt \phi_p \, \big(a_p^k/\mu_p)\circ \eta_{p}$. Notice that if $I_p^k$
is not the null function, then the supporting ball $B_p^k$ of $a_p^k$ must be contained in $B_{4\kappa Q}^{\RR^n}(0) = \bBall$. Hence, from Lemma \ref{lem:atom_ion} we deduce that the $I_p^k$ are constant multiples of $\ghu$-ions on $M$ 
at scale $\kappa Q^2$, where the constant does not depend on $p$ or $k$.

Thus, in light of \eqref{f: equivionicatomic}, from \eqref{f: phij fell} and \eqref{f: lajk} we deduce that $\phi_p f \in \ghu(M)$ and 
\[
\norm{\phi_p f}{\ghu(M)}
\leq C\, \sum_k \, \bigmod{\la_p^k}
\leq  C \, \left[ \norm{\phi_p f}{1}+ \norm{\opK_*(\phi_p f)}{1} \right].
\]
By summming over all $p \in \Discr$, and using \eqref{f: est cKstar phij f} and \eqref{f: est phij f}, we conclude that
\[
\norm{f}{\ghu(M)} 
\leq C\, \left[ \norm{\opK_*f}{1}+\norm{f}{1}\right],
\]
as required. 
\end{proof}

The following maximal characterisation of $\ghu(M)$ in terms of a \emph{single} AI
is a direct consequence of Proposition \ref{p: limmaxop} and Theorem \ref{t: carmax}. 

\begin{theorem} \label{t: max char AI}
Suppose that $K$ is an AI on $M$.  Then 
\[
	\ghu(M) = \bigl\{f\in L^1(M) \tc \opK_*f\in L^1(M)\bigr\}
\]
and there exists a positive constant $C$ such that  
\[
	C^{-1} \norm{f}{\ghu}\leq \norm{f}{1}+\norm{\opK_*f}{1}\leq C\norm{f}{\ghu} .
\]
\end{theorem}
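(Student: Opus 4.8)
The plan is to obtain the statement as the conjunction of the two halves of the equivalence, each of which is already in hand. Indeed, Theorem~\ref{t: carmax} already shows that if $f\in L^1(M)$ and $\opK_*f\in L^1(M)$, then $f\in\ghu(M)$ with $\norm{f}{\ghu}\leq C\,[\norm{f}{1}+\norm{\opK_*f}{1}]$; this gives the inclusion $\{f\in L^1(M)\tc\opK_*f\in L^1(M)\}\subseteq\ghu(M)$ together with the lower bound $C^{-1}\norm{f}{\ghu}\leq\norm{f}{1}+\norm{\opK_*f}{1}$. So the only thing left is the reverse inclusion and the matching upper bound.

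For that, first I would note that $\ghu(M)$ is continuously contained in $L^1(M)$: by the Cauchy--Schwarz inequality any $\ghu(M)$-atom at scale $1$ has $L^1$-norm at most $1$, so summing over an atomic decomposition gives $\norm{f}{1}\leq\norm{f}{\ghu}$ for every $f\in\ghu(M)$. Next, since $K$ is an AI on $M$, I would decompose $K=K_1+K_2$ with $K_1\in\aiV$ and $K_2$ obeying \eqref{eq:ai_l1bound}; the bound \eqref{eq:ai_l1bound} makes the maximal operator attached to $K_2$ bounded on $L^1(M)$, while Proposition~\ref{p: limmaxop} makes the maximal operator attached to $K_1$ bounded from $\ghu(M)$ to $L^1(M)$ (this is precisely the corollary stated just after Proposition~\ref{p: limmaxop}). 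Hence, for $f\in\ghu(M)$, we get $\opK_*f\in L^1(M)$ with $\norm{\opK_*f}{1}\leq C\norm{f}{\ghu}$, and together with the embedding above this yields $\norm{f}{1}+\norm{\opK_*f}{1}\leq C\norm{f}{\ghu}$, hence also $\ghu(M)\subseteq\{f\in L^1(M)\tc\opK_*f\in L^1(M)\}$.

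Combining the two directions gives both the set equality and the two-sided norm estimate, which is the assertion. The hard part is not here but in the two ingredients already proved --- Theorem~\ref{t: carmax}, whose proof goes through the localisation to harmonic coordinate charts and the Ahlfors-space decomposition Theorem~\ref{thm:main}, and Proposition~\ref{p: limmaxop}; at the level of the present statement the only steps requiring a word of justification are the elementary continuous embedding $\ghu(M)\hookrightarrow L^1(M)$ and the use of the AI decomposition to reduce the $\ghu(M)\to L^1(M)$ boundedness of $\opK_*$ to the LAI case.
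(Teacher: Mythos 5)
Your proposal is correct and is essentially the paper's own argument: the paper derives Theorem \ref{t: max char AI} as a direct consequence of Theorem \ref{t: carmax} (for the inclusion $\{f \in L^1 \tc \opK_* f \in L^1\} \subseteq \ghu(M)$ with the lower norm bound) and of the corollary to Proposition \ref{p: limmaxop} obtained via the AI decomposition $K = K_1 + K_2$ (for the reverse inclusion and upper bound), exactly as you do, with the continuous embedding $\ghu(M) \hookrightarrow L^1(M)$ supplying the remaining term $\norm{f}{1} \leq \norm{f}{\ghu}$.
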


To conclude this section, we briefly remark that the above result implies the characterisation of the Hardy space $\ghu(M)$ in terms of a $\gamma$-H\"older grand maximal function. Indeed, as in Section \ref{s:Uchiyama}, for all $x \in M$ and $r>0$ let $\cutF_\gamma(x,r)$ be the collection of all functions
$\phi : X \to \RR$ such that
\[
\supp \phi \subseteq B_r(x), \quad
|\phi(y)| \leq r^{-n}, \quad
|\phi(z) - \phi(y)| \leq r^{-n} (d(z,y)/r)^\gamma
\]
for all $y,z \in M$. Then define the $\gamma$-H\"older local grand maximal function $\maxG_\gamma$ by
\[
\maxG_\gamma f(x) = \sup_{r \in (0,1]} \sup_{\phi \in \cutF_\gamma(x,r)} \left| \int_M \phi(y) f(y) \,\wrt \mu(y) \right| 
\]
for all $f \in L^1_\loc(M)$ and $x \in M$.

\begin{corollary}\label{cor:grandmaximal}
For all $\gamma \in (0,1]$,
\[
	\ghu(M) = \bigl\{f\in L^1_\loc(M) \tc \maxG_\gamma f \in L^1(M) \bigr\}
\]
and there exists a positive constant $C$ such that  
\[
	C^{-1} \norm{f}{\ghu}\leq \norm{\maxG_\gamma f}{1}\leq C\norm{f}{\ghu}.
\]
\end{corollary}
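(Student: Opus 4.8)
The plan is to prove the two inclusions separately; the two-sided norm equivalence will drop out of the same arguments. For the inclusion $\subseteq$, i.e.\ that $f \in \ghu(M)$ implies $\maxG_\gamma f \in L^1(M)$ with $\norm{\maxG_\gamma f}{1} \leq C \norm{f}{\ghu}$, I would show that $\maxG_\gamma$ maps $\ghu(M)$ into $L^1(M)$ boundedly by invoking Lemma~\ref{lemma:sublinear_hardy}. The operator $\maxG_\gamma$ is clearly sublinear, and since every $\phi \in \cutF_\gamma(x,r)$ with $r \leq 1$ satisfies $|\phi| \leq r^{-n}$ and $\supp \phi \subseteq B_r(x)$, the uniform local $n$-Ahlfors property yields the pointwise domination $\maxG_\gamma f \leq D_1 \maxM_1 f$; hence $\maxG_\gamma$ is of weak type $(1,1)$ and bounded on $L^2(M)$. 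It then remains to bound $\maxG_\gamma$ uniformly on $2$-atoms at scale $1$. For a $2$-atom $a$ supported in a ball $B$ with $r_B \leq 1$, the contribution on $5B$ is handled by $L^2$-boundedness and the local doubling property exactly as in the proof of Proposition~\ref{p: limmaxop}, giving $\norm{\maxG_\gamma a}{L^1(5B)} \leq C \mu(5B)^{1/2} \norm{a}{2} \leq C$. Off $5B$, if $x \notin 5B$ and $\phi \in \cutF_\gamma(x,r)$ with $r \leq 1$ has $\int \phi a \neq 0$, then $B_r(x)$ meets $B$, forcing $r > d(x,c_B) - r_B > \tfrac{4}{5} d(x,c_B) > 4 r_B$ and $d(x,c_B) < 1 + r_B \leq 2$; for a global atom (so $r_B = 1$) this gives $r > 4$, impossible, whence $\maxG_\gamma a$ vanishes off $5B$, while for a standard atom the cancellation condition gives $|\int \phi a| = |\int (\phi - \phi(c_B)) a| \leq r^{-n-\gamma} r_B^\gamma \leq C d(x,c_B)^{-n-\gamma} r_B^\gamma$. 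A dyadic-annulus decomposition of $\{5 r_B < d(\cdot,c_B) < 2\}$ together with the uniform local $n$-Ahlfors property then yields $\int_{(5B)^c} \maxG_\gamma a \wrt\mu \leq C r_B^\gamma \sum_{j \geq 0} (2^j r_B)^{-\gamma} = C$, as required.

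For the reverse inclusion $\supseteq$, suppose $f \in L^1_\loc(M)$ with $\maxG_\gamma f \in L^1(M)$. First I would upgrade $f$ to an element of $L^1(M)$: testing $\maxG_\gamma f$ against the explicit cutoffs $\phi_{x,r}(y) = r^{-n}(1 - d(x,y)/r)_+$, which lie in $\cutF_\gamma(x,r)$ for every $\gamma \in (0,1]$, a routine Lebesgue-point computation based on the volume asymptotics \eqref{eq:vol_asym} (integrating by parts in the radial variable) shows that $\int_M \phi_{x,r} f \wrt\mu \to \kappa_n f(x)$ as $r \to 0^+$ for a.e.\ $x$, with $\kappa_n = \omega_n/(n+1) > 0$; hence $|f| \leq \kappa_n^{-1} \maxG_\gamma f$ a.e.\ and $f \in L^1(M)$. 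Next, consider the $1$-LAI $S$ of exponent $\gamma$ furnished by \eqref{eq:LAI_example}, namely $S(t,x,y) = t^{-n} \zeta(d(x,y)/t)$ with $\zeta = (1-\cdot)_+$ (bounded by $1$, supported in $[0,1]$, with $\zeta(0) = 1$, and $\gamma$-H\"older with constant $\leq 1$ for every $\gamma \in (0,1]$); since $S(t,x,\cdot) \in \cutF_\gamma(x,t)$ for all $t \in (0,1]$, one has the pointwise domination $\opS_* f \leq \maxG_\gamma f$, so $\opS_* f \in L^1(M)$. As $S \in \taiV$, Theorem~\ref{t: max char AI} applied to $S$ then gives $f \in \ghu(M)$ and $\norm{f}{\ghu} \leq C(\norm{f}{1} + \norm{\opS_* f}{1}) \leq C \norm{\maxG_\gamma f}{1}$.

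Combining the two parts yields the claimed identity of spaces together with the two-sided norm equivalence. The substance of the result is already contained in Theorem~\ref{t: max char AI}; the only points here that require a little care are the Lebesgue-point argument upgrading $L^1_\loc$ to $L^1$ (routine given \eqref{eq:vol_asym}) and the off-diagonal atomic estimate for $\maxG_\gamma$, which is essentially a replay of the corresponding estimate in the proof of Proposition~\ref{p: limmaxop}, the only new feature being that $\maxG_\gamma a$ need not be supported in $5B$ but is still supported in a ball of radius at most $2$. I do not expect any genuinely new obstacle to arise.
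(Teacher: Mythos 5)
Your proposal is correct and follows essentially the same route as the paper: one inclusion comes from dominating the radial maximal function of an explicit LAI of the form \eqref{eq:LAI_example} by $\maxG_\gamma$, establishing $|f|\leq C\,\maxG_\gamma f$ a.e., and invoking Theorem \ref{t: max char AI}, while the other comes from $\maxG_\gamma \leq C\,\maxM_1$ together with uniform estimates on atoms and Lemma \ref{lemma:sublinear_hardy} (you simply write out in detail the atomic estimate the paper dispatches by ``arguing as in the proof of Proposition \ref{p: limmaxop}''). The only cosmetic difference is that you obtain the a.e.\ lower bound on $|f|$ by a direct Lebesgue-point computation with tent cutoffs, whereas the paper deduces $c|f|\leq \opK_* f$ from the convergence $\opK_t f \to cf$ in $L^1_\loc$ via a density argument; both rest on the small-ball volume asymptotics \eqref{eq:vol_asym} and are equally valid.
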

\begin{proof}
It is immediately seen that, if $K$ is a LAI constructed as in \eqref{eq:LAI_example} with $\psi$ supported in $[0,1]$, bounded by $1$, and satisfying $|\psi(u)-\psi(v)| \leq |u-v|^\gamma$ for all $u,v \in [0,\infty)$, then $K(t,x,\cdot) \in \cutF_\gamma(x,t)$ for all $t \in (0,1]$, and therefore
\[
\opK_* f \leq \maxG_\gamma f
\]
pointwise for all $f \in L^1_\loc(M)$.
Moreover,  for all $x \in M$, by using normal coordinates centred at $x$, one easily deduces that $\lim_{t \to 0^+} \int_M K(t,x,y) \wrt\mu(y) = c$, where $c := \int_{\RR^n} \psi(|y|) \wrt y > 0$. From this it readily follows that if $f \in C_c(M)$, then $\opK_t f \to cf$ pointwise as $t \to 0^+$; a density argument then shows that, if $f \in L^1_\loc(M)$, then $\opK_t f \to cf$ in $L^1_\loc(M)$ as $t \to 0^+$, whence
\[
c |f| \leq \opK_* f
\]
pointwise a.e.\ for all $f \in L^1_\loc(M)$. This shows that, if $f \in L^1_\loc(M)$ and $\maxG_\gamma f \in L^1(M)$, then $f,\opK_* f \in L^1(M)$ as well, and by Theorem \ref{t: max char AI} we obtain that $f \in \ghu(M)$ with control of the norm.

Conversely, since $M$ is uniformly locally $n$-Ahlfors, one can easily check that
\[
\maxG_\gamma \leq C \maxM_1
\]
and that (by arguing as in the proof of Proposition \ref{p: limmaxop}) $\maxG_\gamma$ is uniformly $L^1$-bounded on $\ghu(M)$-atoms, hence $\maxG_\gamma$ is bounded from $\ghu(M)$ to $L^1(M)$ by Lemma \ref{lemma:sublinear_hardy}.
\end{proof}

\section{Characterisation of $\ghu(M)$ via heat and Poisson maximal functions}\label{s:heat_poisson}

In this section we prove that $\ghu(M)$ can be characterised in terms of the maximal operators associated 
either to the heat semigroup $\sgrH_t$ or to the Poisson semigroup $\sgrP_t$, that is,
\[
\ghuH(M) = \ghu(M) = \ghuP(M).
\]
In light of Theorem \ref{t: max char AI}, it will be enough to show that the heat and Poisson kernels on $M$ are AI in the sense of Definition \ref{def: AI}. We will actually show that a similar result holds for all semigroups $\e^{-t\opL^\alpha}$ with $\alpha \in (0,1]$.

Denote by $h_t(x,y)$ the integral kernel of the heat semigroup $\sgrH_t$.  We call the function
$(t,x,y) \mapsto h_t(x,y)$ the heat kernel.  It is well known
(see, for instance, \cite[Theorem~5.5.3 and Section 5.6.3]{SC})
that, under our assumptions on $M$,
there exist positive constants $C_1$ and $C_2$
such that 
\begin{equation} \label{f: twosided heat}
	C_1 \, \frac{\e^{-d(x,y)^2/(C_1 t)}}{t^{n/2}} 
	\leq h_t(x,y) 
	\leq C_2 \, \frac{\e^{-d(x,y)^2/(C_2 t)}}{t^{n/2}} 
\end{equation}
for every $x$ and $y$ in $M$ and $t$ in $(0,1]$.  
Furthermore (see, e.g., \cite[Theorem~6]{Da2}),
there exists positive constants $C$ and $c$ such that 
\begin{equation} \label{f: gradient heat}
\mod{\nabla_y h_t(x,y)}
\leq C \, \frac{\e^{-cd(x,y)^2/t}}{t^{(n+1)/2}} 
\end{equation}
for every $x$ and $y$ in $M$ and $t$ in $(0,1]$; in the discussion below, we will actually use the gradient bound \eqref{f: gradient heat} only for $d(x,y)$ small.

\begin{proposition} \label{p: heat}
The function
\begin{align*}
(t,x,y)\in (0,1]\times M\times M &\rightarrow h_{t^2}(x,y),
\end{align*}
is an AI of exponent $1$ in the sense of Definition~\ref{def: AI}.
\end{proposition}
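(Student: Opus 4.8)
The plan is to exhibit a decomposition $h_{t^2}(x,y) = K_1(t,x,y) + K_2(t,x,y)$ of the required type: $K_1 \in \aiV_1$ and $K_2$ satisfying the uniform integrability bound \eqref{eq:ai_l1bound}. Fix once and for all a smooth function $\phi : [0,\infty) \to [0,1]$ with $\phi \equiv 1$ on $[0,1/2]$ and $\supp \phi \subseteq [0,1]$, and set
\[
K_1(t,x,y) = \phi(d(x,y))\, h_{t^2}(x,y), \qquad K_2(t,x,y) = \bigl(1-\phi(d(x,y))\bigr)\, h_{t^2}(x,y).
\]
Note that the rescaling $t \mapsto t^2$ is what makes the Gaussian parameter ($t^2$) match the $t^{-n}$ normalisation demanded by Definition \ref{def: LAI}.

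First I would record the relevant bounds for the rescaled heat kernel $(t,x,y) \mapsto h_{t^2}(x,y)$ with $t \in (0,1]$. Applying the two-sided estimate \eqref{f: twosided heat} with $t$ replaced by $t^2 \in (0,1]$ gives $h_{t^2}(x,y) \leq C_2\, t^{-n} \e^{-(d(x,y)/t)^2/C_2}$ and $h_{t^2}(x,x) \geq C_1\, t^{-n}$; since $\e^{-u^2/C_2} \leq C (1+u)^{-n-1}$ for $u \geq 0$, the first bound yields condition \ref{en:lai_upper} of Definition \ref{def: LAI} (with $\gamma = 1$) and the second is exactly \ref{en:lai_nondeg}. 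For the Lipschitz condition \ref{en:lai_lip} (again $\gamma = 1$) I would invoke the gradient estimate \eqref{f: gradient heat}, which after the substitution $t \mapsto t^2$ and the inequality $\e^{-cu^2} \leq C(1+u)^{-n-2}$ gives the differential bound $|\nabla_y h_{t^2}(x,y)| \leq C\, t^{-n-1} (1+d(x,y)/t)^{-n-2}$; since the heat kernel is smooth in $y$, Remark \ref{rem:LAI_diff} then supplies condition \ref{en:lai_lip} for $h_{t^2}$ (and, in accordance with the remark following \eqref{f: gradient heat}, once we pass to $K_1$ below, which is supported where $d(x,y) \leq 1$, only the gradient bound for bounded $d(x,y)$ is actually used). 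Thus $(t,x,y) \mapsto h_{t^2}(x,y)$ satisfies \ref{en:lai_upper}, \ref{en:lai_nondeg} and \ref{en:lai_lip}, but of course not \ref{en:lai_supp}.

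To produce $K_1$, I would apply Lemma \ref{lem:lai_cutoff} with $\Phi(x,y) = \phi(d(x,y))$: this is nonnegative, bounded by $1$, vanishes for $d(x,y) > 1$, and is $1$-H\"older since $|\phi(d(x,y)) - \phi(d(x,z))| \leq \|\phi'\|_\infty\, |d(x,y) - d(x,z)| \leq \|\phi'\|_\infty\, d(y,z)$ by the triangle inequality; hence $K_1 = \Phi \cdot h_{t^2}$ satisfies \ref{en:lai_supp}, \ref{en:lai_upper} and \ref{en:lai_lip}, while \ref{en:lai_nondeg} is immediate because $\phi(0) = 1$, so $K_1 \in \aiV_1$. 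It then remains to treat $K_2$, which vanishes for $d(x,y) \leq 1/2$. For $d(x,y) > 1/2$ one uses $|K_2(t,x,y)| \leq h_{t^2}(x,y) \leq C_2\, t^{-n} \e^{-d(x,y)^2/(C_2 t^2)}$ together with the elementary estimate $\sup_{t \in (0,1]} t^{-n} \e^{-r^2/(C_2 t^2)} \leq C\, r^{-n} \e^{-r^2/(2C_2)}$ (split off a factor $\e^{-r^2/(2C_2 t^2)} \leq \e^{-r^2/(2C_2)}$ using $t \leq 1$ and optimise the remaining $t^{-n}\e^{-r^2/(2C_2 t^2)}$ over $t$ via the substitution $t = rs$), obtaining $\sup_{t \in (0,1]} |K_2(t,x,y)| \leq C\, \e^{-d(x,y)^2/(2C_2)}$ for $d(x,y) > 1/2$. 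Decomposing $M$ into the annuli $\{k \leq d(x,y) < k+1\}$ and using the at-most-exponential volume growth \eqref{eq:exp_growth}, one finds $\int_M \sup_{t \in (0,1]} |K_2(t,x,y)| \wrt\mu(x) \leq C \sum_{k \geq 0} \e^{-k^2/(2C_2)}\, a\, \e^{\beta(k+1)} < \infty$ uniformly in $y$, so \eqref{eq:ai_l1bound} holds and $(t,x,y) \mapsto h_{t^2}(x,y) \in \taiV_1$.

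I expect the only genuinely delicate point to be the verification of the Lipschitz condition \ref{en:lai_lip}, which rests on the heat-kernel gradient estimate \eqref{f: gradient heat} and its conversion into condition \ref{en:lai_lip} through the fundamental-theorem-of-calculus argument recorded in Remark \ref{rem:LAI_diff}; the remaining points are routine bookkeeping with the Gaussian bounds and with the exponential volume growth of $M$.
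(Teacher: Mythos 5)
Your proposal is correct and follows essentially the same route as the paper: the same decomposition $h_{t^2} = \Phi\, h_{t^2} + (1-\Phi)\, h_{t^2}$ with a cutoff in $d(x,y)$, the two-sided Gaussian bound \eqref{f: twosided heat} for conditions \ref{en:lai_upper} and \ref{en:lai_nondeg} of Definition \ref{def: LAI}, the gradient bound \eqref{f: gradient heat} combined with Remark \ref{rem:LAI_diff} for condition \ref{en:lai_lip}, and the Gaussian decay together with the exponential volume growth \eqref{eq:exp_growth} for the far-from-diagonal part. The only (harmless) difference is that the paper chooses the cutoff radius below the injectivity radius so that $\Phi(x,\cdot)$ is smooth and differentiates the product $\Phi\, h_{t^2}$ directly via the Leibniz rule, whereas you apply Remark \ref{rem:LAI_diff} to $h_{t^2}$ itself and then invoke Lemma \ref{lem:lai_cutoff}, which only requires the $1$-Lipschitz property of the distance function.
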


\begin{proof}
Let $r = \min\{\iota_M,1\}$, where $\iota_M$ is the injectivity radius of $M$. Let $\phi : \RR \to [0,1]$ be smooth, equal to $1$ on $[-r/4,r/4]$ and supported in $[-r/2,r/2]$, and define $\Phi(x,y) = \phi(d(x,y))$. Then $\Phi(x,\cdot)$ is smooth for all $x \in M$, and $|\nabla_y \Phi(x,y)| \leq L$ for all $x,y \in M$ and some $L>0$.

Set now $K^1(t,x,y) = \Phi(x,y) h_{t^2}(x,y)$ and $K^2(t,x,y) = (1-\Phi(x,y)) h_{t^2}(x,y)$. Then clearly $K^1(t,x,y) = 0$ whenever $d(x,y) > r/2$, and moreover from \eqref{f: twosided heat} we deduce that
\[
K^1(t,x,y) \leq C t^{-n} \e^{-d(x,y)^2/(C t^2)} \leq C t^{-n} (1+d(x,y)/t)^{-n-1}
\]
and
\[
K^1(t,x,x) = h_{t^2}(x,x) \geq C t^{-n},
\]
so $K^1$ satisfies conditions \ref{en:lai_supp}, \ref{en:lai_upper} and \ref{en:lai_nondeg} of Definition \ref{def: LAI} with $\la = r/2$. In addition
\[
\nabla_y K^1(t,x,y) = h_{t^2}(x,y) \nabla_y \Phi(x,y) + \Phi(x,y) \nabla_y h_{t^2}(x,y)
\] 
and from \eqref{f: twosided heat} and \eqref{f: gradient heat} we deduce that
\[
|\nabla_y K^1(t,x,y)| \leq C t^{-n-1} \e^{-d(x,y)^2/(Ct^2)} \leq C t^{-n-1} (1+d(x,y)/t)^{-n-2}.
\]
Hence by Remark \ref{rem:LAI_diff} we deduce that $K^1$ also satisfies condition \ref{en:lai_lip} of Definition \ref{def: LAI}, and consequently is a $r/2$-LAI on $M$.

Note now that $K^2(t,x,y) = 0$ whenever $d(x,y) \leq r/4$; hence, by \eqref{f: twosided heat},
\[\begin{split}
	\sup_{t\in (0,1]} \, K^2(t,x,y)  
	&\leq C_2  \chr_{[r/4,\infty)}(d(x,y)) \, \frac{\e^{-d(x,y)^2/{2C_2}}}{d(x,y)^n}  \, \sup_{\tau>0} \, \tau^{n/2} \, \e^{-\tau/{2C_2}} \\
	&\leq C \e^{-d(x,y)^2/{2C_2}}, 
\end{split}\]
and from the volume bound \eqref{eq:exp_growth} it readily follows that $K^2$ satisfies condition \eqref{eq:ai_l1bound}. This proves that $K = K^1 + K^2$ is an AI on $M$.
\end{proof}

Now we consider the semigroups $\e^{-t \opL^\alpha}$ with $\alpha\in(0,1)$, and denote by $p_t^\alpha$ their integral kernels; the Poisson semigroup $\sgrP_t$ corresponds to $\alpha=1/2$. As it is well known (see, e.g., \cite[Section IX.11]{Y}), these semigroups can be subordinated to the heat semigroup.

\begin{lemma}\label{lem:subordination}
For $\alpha \in (0,1)$, let $F_\alpha : (0,\infty) \to \CC$ be defined by contour integration as follows:
\begin{equation} \label{f: Falpha def}
F_\alpha(s) = \frac{1}{2\pi i} \int_{\sigma-i\infty}^{\sigma+i\infty} s \, \e^{sz-z^\alpha} \wrt z
\end{equation}
for any $\sigma > 0$. Then there exists constants $C_\alpha,c_\alpha>0$ such that
\begin{equation} \label{f: Falpha est}
0 \leq F_\alpha(s) \leq C_\alpha s^{-\alpha} \, \e^{-c_\alpha s^{-\alpha/(1-\alpha)}}.
\end{equation}
Moreover
\begin{equation} \label{f: laplace tr}
e^{-z^{\alpha}} = \int_0^\infty F_\alpha(s) \, \e^{-sz} \dtt s
\end{equation}
for all $z \in \CC$ with $\Re z > 0$.
\end{lemma}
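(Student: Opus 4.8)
The plan is to recognise $F_\alpha(s)$ as $s$ times the density $g_\alpha$ of the one-sided $\alpha$-stable subordinator and to read off the pointwise bound \eqref{f: Falpha est} from two complementary contour deformations of the Bromwich integral in \eqref{f: Falpha def}. Nonnegativity of $F_\alpha$ and the identity \eqref{f: laplace tr} are essentially classical. Indeed, $z\mapsto z^\alpha$ is a Bernstein function on $(0,\infty)$, so $z\mapsto\e^{-z^\alpha}$ is completely monotone and hence, by Bernstein's theorem (equivalently, by the subordination recalled just before the statement; see \cite[Section IX.11]{Y}), the Laplace transform of a probability measure on $[0,\infty)$. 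On the other hand, writing $g_\alpha=F_\alpha/s$, the integral \eqref{f: Falpha def} is the Bromwich inversion integral for $g_\alpha$: shifting the contour $\Re w=\sigma$ to the right — where $|\e^{-w^\alpha}|=\e^{-|w|^\alpha\cos(\alpha\arg w)}\le\e^{-|w|^\alpha\cos(\alpha\pi/2)}$ decays superpolynomially — and picking up the simple pole of $\int_0^\infty\e^{s(w-z)}\wrt s=(z-w)^{-1}$ at $w=z$, after an application of Fubini valid for $0<\sigma<\Re z$, one obtains $\int_0^\infty g_\alpha(s)\,\e^{-sz}\wrt s=\e^{-z^\alpha}$, which is \eqref{f: laplace tr}. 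By uniqueness of Laplace transforms the Bernstein measure has density $g_\alpha$, and since $g_\alpha$ is continuous (differentiation under the integral sign in \eqref{f: Falpha def}) it is nonnegative everywhere, whence $F_\alpha=s\,g_\alpha\ge0$.

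For $s\ge1$ I would deform the Bromwich contour onto the two rays $\{r\e^{\pm i\beta}:r>0\}$, where $\beta$ is fixed with $\pi/2<\beta<\min\{\pi,\pi/(2\alpha)\}$; this is legitimate since $\e^{sw-w^\alpha}$ is holomorphic off the cut $(-\infty,0]$ and on the closing arcs $|w|=R$ the term $\Re(w^\alpha)\gtrsim R^\alpha$ dominates $\Re(sw)\le\sigma s$. On the rays one has $\Re w=-r|\cos\beta|<0$ and $\Re(w^\alpha)=r^\alpha\cos(\alpha\beta)>0$, and the deformed integral takes the form $g_\alpha(s)=\pm\pi^{-1}\Im\bigl[\e^{i\beta}\int_0^\infty\e^{sr\e^{i\beta}-r^\alpha\e^{i\alpha\beta}}\wrt r\bigr]$. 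One integration by parts in $r$, using $\e^{i\beta}\e^{sr\e^{i\beta}}=s^{-1}\partial_r\e^{sr\e^{i\beta}}$ (whose boundary term is real and therefore disappears after taking imaginary parts), introduces the factor $\alpha r^{\alpha-1}/s$, and the crude estimate $|\e^{sr\e^{i\beta}-r^\alpha\e^{i\alpha\beta}}|\le\e^{-sr|\cos\beta|}$ then gives $|g_\alpha(s)|\le\frac{\alpha}{\pi s}\int_0^\infty r^{\alpha-1}\e^{-sr|\cos\beta|}\wrt r=\frac{\alpha\,\Gamma(\alpha)}{\pi|\cos\beta|^\alpha}\,s^{-1-\alpha}$. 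Thus $F_\alpha(s)=s\,g_\alpha(s)\le C_\alpha s^{-\alpha}$ for all $s>0$, which already yields \eqref{f: Falpha est} on $[1,\infty)$ because there $\e^{-c_\alpha s^{-\alpha/(1-\alpha)}}\ge\e^{-c_\alpha}$.

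For $0<s\le1$ I would instead shift the contour to the vertical line $\Re w=z_0:=(\alpha/s)^{1/(1-\alpha)}$, the real saddle point of $w\mapsto sw-w^\alpha$, at which $sz_0=\alpha z_0^\alpha$; this gives $|F_\alpha(s)|\le\frac{s}{2\pi}\e^{sz_0}\int_{\RR}\e^{-\Re[(z_0+iy)^\alpha]}\wrt y$. Writing $z_0+iy=(z_0/\cos\psi)\e^{i\psi}$ one finds $\Re[(z_0+iy)^\alpha]=z_0^\alpha(\cos\psi)^{-\alpha}\cos(\alpha\psi)\ge z_0^\alpha$, the last inequality being the elementary estimate $\cos(\alpha\psi)\ge(\cos\psi)^\alpha$ for $|\psi|<\pi/2$ (the function $\log\cos(\alpha\psi)-\alpha\log\cos\psi$ vanishes at the origin and has derivative $\alpha(\tan\psi-\tan(\alpha\psi))$ of the same sign as $\psi$). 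Combining this with $\Re[(z_0+iy)^\alpha]\ge\cos(\alpha\pi/2)|y|^\alpha$ and splitting the $y$-integral at $|y|=z_0$ — using the convex combination $\Re[(z_0+iy)^\alpha]\ge\tfrac{1+\alpha}{2}z_0^\alpha+\tfrac{1-\alpha}{2}\cos(\alpha\pi/2)|y|^\alpha$ on $\{|y|>z_0\}$ — bounds the integral by $C_\alpha(1+z_0)\e^{-\frac{1+\alpha}{2}z_0^\alpha}$. Since $sz_0-\tfrac{1+\alpha}{2}z_0^\alpha=-\tfrac{1-\alpha}{2}z_0^\alpha$ and $z_0^\alpha=\alpha^{\alpha/(1-\alpha)}s^{-\alpha/(1-\alpha)}$, this yields $|F_\alpha(s)|\le C_\alpha\bigl(s+\alpha^{1/(1-\alpha)}s^{-\alpha/(1-\alpha)}\bigr)\,\e^{-c_\alpha s^{-\alpha/(1-\alpha)}}$ with $c_\alpha=\tfrac{1-\alpha}{2}\alpha^{\alpha/(1-\alpha)}$; for $s\le1$ the prefactor is $\le C_\alpha s^{-\alpha/(1-\alpha)}$, and writing $t=s^{-\alpha/(1-\alpha)}\ge1$ (so that $s^{-\alpha}=t^{1-\alpha}$) and using $t^\alpha\le C_\alpha\e^{(c_\alpha/2)t}$ absorbs this polynomial factor into half of the exponential, leaving $|F_\alpha(s)|\le C_\alpha s^{-\alpha}\e^{-(c_\alpha/2)s^{-\alpha/(1-\alpha)}}$. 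Together with the tail bound of the previous paragraph, this establishes \eqref{f: Falpha est}.

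The hard part here is not any individual estimate but the bookkeeping around the two contour shifts: one must check that both deformations are legitimate — in particular that the connecting arcs at infinity contribute nothing and that the branch cut of $w\mapsto w^\alpha$ stays outside the swept region in the second shift — and one must absorb the polynomial prefactors into the exponential without destroying the positivity of $c_\alpha$. The genuinely new point, beyond the classical subordination, is that two different contours are needed: the tilted rays produce the polynomial tail $s^{-\alpha}$ as $s\to\infty$, whereas the vertical saddle line produces the stretched-exponential decay as $s\to0^+$, and neither contour alone captures both.
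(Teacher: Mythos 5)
Your proposal is correct and follows essentially the same route as the paper: the nonnegativity of $F_\alpha$ and the identity \eqref{f: laplace tr} are treated as classical subordination facts (the paper simply cites \cite[Section IX.11]{Y}, while you rederive them via Bernstein's theorem and a residue/Fubini computation), and the bound \eqref{f: Falpha est} is obtained, exactly as in the paper, by two contour estimates — a vertical line through (a constant multiple of) the saddle point $(\alpha/s)^{1/(1-\alpha)}$ for small $s$, and rays in the left half-plane for large $s$. The only implementation differences are minor: for large $s$ the paper passes to the limiting angle $\theta=\pi$ and exploits the smallness of the $\sin$ factor, whereas you keep $\beta<\min\{\pi,\pi/(2\alpha)\}$ and integrate by parts, and for small $s$ you use $\cos(\alpha\psi)\geq(\cos\psi)^\alpha$ where the paper uses $\Re z^\alpha\geq\cos(\alpha\pi/2)\,|z|^\alpha$; both variants yield the stated estimate.
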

\begin{proof}
The construction of the nonnegative function $F_\alpha$ satisfying \eqref{f: laplace tr} through the contour integration \eqref{f: Falpha def} is given in \cite[Section IX.11]{Y}. It only remains to prove the upper bound in \eqref{f: Falpha est}.

Let $a = \cos(\alpha\pi/2)/3$; since $\Re z^\alpha \geq 3a |z|^\alpha$ whenever $\Re z > 0$, from the representation \eqref{f: Falpha def} we obtain that, for any $\sigma \geq (a\alpha)^{1/(1-\alpha)}$,
\[\begin{split}
F_\alpha(s) 
&\leq \frac{1}{2\pi} s \e^{s\sigma} \sigma \int_{-\infty}^\infty \e^{-3a \sigma^\alpha (1+t^2)^{\alpha/2}} \wrt t \\
&\leq C  \e^{2s\sigma-2a\sigma^\alpha},
\end{split}\]
where  the constant $C$ may depend on $\alpha$ but not on $\sigma$; by choosing $\sigma = (a\alpha/s)^{1/(1-\alpha)}$, we obtain the upper estimate in \eqref{f: Falpha est} in the case $s \leq 1$. As for the remaining case, by changing the contour of integration in \eqref{f: Falpha def} to the concatenation of the two half-lines $z = r e^{-i\theta}$ ($-\infty < -r < 0$) and $z = r \e^{i\theta}$ ($0 < r < \infty$) for any $\theta \in (\pi/2,\pi)$, we obtain, as in \cite[p.~263]{Y}, the representation
\[
F_\alpha(s) = \frac{1}{\pi} \int_0^\infty s \e^{sr \cos\theta - r^\alpha \cos(\alpha\theta)} \sin\left(sr\sin\theta-r^\alpha \sin(\alpha\theta) + \theta\right) \wrt r;
\]
by passing to the limit under the integral sign, we can also take $\theta = \pi$ in the above formula and obtain
\[\begin{split}
F_\alpha(s)
&= \frac{1}{\pi} \int_0^\infty  s \e^{-sr-r^\alpha \cos(\alpha\pi)} \sin(r^\alpha \sin(\alpha\pi)) \wrt r \\
&\leq C \int_{0}^\infty s \e^{-csr} r^\alpha \wrt r \\
&\leq C s^\alpha
\end{split}\]
for $s \geq 1$.
\end{proof}

\begin{proposition} \label{p: Poisson}
Let $\alpha \in (0,1)$. The function
\begin{align*}
(t,x,y)\in (0,1]\times M\times M &\rightarrow p^\alpha_{t^{2\alpha}}(x,y),
\end{align*}
is an AI of exponent $\min\{1,2\alpha\}$ in the sense of Definition~\ref{def: AI}.
\end{proposition}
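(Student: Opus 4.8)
Here is how I would approach the proof; the strategy parallels that of Proposition~\ref{p: heat}, with the heat kernel bounds fed through the subordination formula of Lemma~\ref{lem:subordination}.

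\smallskip

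First I would record the subordination identity at the right scale. Applying \eqref{f: laplace tr} with $z=t^2\opL$ (via the spectral theorem for the nonnegative self-adjoint operator $\opL$) and using $(t^2\opL)^\alpha=t^{2\alpha}\opL^\alpha$, one gets $\e^{-t^{2\alpha}\opL^\alpha}=\int_0^\infty F_\alpha(s)\,\sgrH_{st^2}\dtt{s}$, and hence, passing to integral kernels (legitimate by Tonelli since $F_\alpha\ge0$ and $h_u\ge0$),
\[
p^\alpha_{t^{2\alpha}}(x,y)=\int_0^\infty F_\alpha(s)\,h_{st^2}(x,y)\dtt{s},\qquad t\in(0,1].
\]
As in the proof of Proposition~\ref{p: heat}, fix $r=\min\{\iota_M,1\}$, let $\phi:\RR\to[0,1]$ be smooth, equal to $1$ on $[-r/4,r/4]$ and supported in $[-r/2,r/2]$, put $\Phi(x,y)=\phi(d(x,y))$, and decompose $p^\alpha_{t^{2\alpha}}=K^1+K^2$ with $K^1=\Phi\cdot p^\alpha_{t^{2\alpha}}$ and $K^2=(1-\Phi)\cdot p^\alpha_{t^{2\alpha}}$. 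It then suffices to show $K^1\in\aiV_\gamma$ with $\gamma=\min\{1,2\alpha\}$ and that $K^2$ satisfies \eqref{eq:ai_l1bound}. In all the estimates below the basic device is to split the $s$-integral at $s=t^{-2}$: in the range $s\le t^{-2}$ one has $st^2\le1$ and the Gaussian bounds \eqref{f: twosided heat} and \eqref{f: gradient heat} apply, while in the range $s\ge t^{-2}$ one has $st^2\ge1$ and one uses only the crude estimates $h_u(x,y)\le C$ and $\mod{\nabla_y h_u(x,y)}\le C$ for $u\ge1$ (the second following by writing $h_u=\sgrH_{u-1/2}\sgrH_{1/2}$ in kernel form, applying \eqref{f: gradient heat} at time $1/2$, and using $\int_M h_{u-1/2}(x,z)\wrt\mu(z)\le1$).

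\smallskip

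For the LAI part $K^1$ only the region $d(x,y)\le r/2$ matters. Condition~\ref{en:lai_supp} is immediate, and \ref{en:lai_nondeg} follows from $K^1(t,x,x)=p^\alpha_{t^{2\alpha}}(x,x)\ge c\int_0^1 F_\alpha(s)(st^2)^{-n/2}\dtt{s}=c\,t^{-n}\int_0^1 F_\alpha(s)\,s^{-n/2-1}\wrt s$ by the lower bound in \eqref{f: twosided heat}, the last integral being a finite positive constant by \eqref{f: Falpha est}. For \ref{en:lai_upper}, and hence via Remark~\ref{rem:LAI_diff} for \ref{en:lai_lip}, one estimates $p^\alpha_{t^{2\alpha}}(x,y)$ and $\nabla_y p^\alpha_{t^{2\alpha}}(x,y)$ (differentiating under the integral sign, which is justified by the same bounds) through the split integral. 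After the substitution $u=st^2$ and the use of \eqref{f: Falpha est}, the $s\le t^{-2}$ contribution becomes a Bessel-type integral of the form $t^{2\alpha}\int_0^1 u^{-\alpha-n/2-1}\,\e^{-d(x,y)^2/(Cu)}\,\e^{-c(t^2/u)^{\alpha/(1-\alpha)}}\wrt u$ (and similarly with $n/2$ replaced by $(n+1)/2$ for the gradient), which one bounds by $C\,t^{-n}(1+d(x,y)/t)^{-n-\gamma}$ (respectively $C\,t^{-n-1}(1+d(x,y)/t)^{-n-1-\gamma}$) by distinguishing the cases $d(x,y)\le t$ and $d(x,y)>t$ and using $2\alpha-\gamma\ge0$; the $s\ge t^{-2}$ contribution is at most $Ct^{2\alpha}$, which on the support $d(x,y)\le r/2\le1/2$ is dominated by $Ct^\gamma\le Ct^{-n}(1+d(x,y)/t)^{-n-\gamma}$, since $1+d(x,y)/t\le C/t$ for $t\le1$. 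Thus $K^1$ is an $(r/2)$-LAI of exponent $\gamma$.

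\smallskip

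Finally, for $K^2$, which vanishes whenever $d(x,y)\le r/4$, one must bound $\int_M\sup_{t\in(0,1]}\mod{K^2(t,x,y)}\wrt\mu(x)$ uniformly in $y$. Writing $\rho=d(x,y)$, the $s\le t^{-2}$ contribution to $p^\alpha_{t^{2\alpha}}(x,y)$ is, by the upper bound in \eqref{f: twosided heat}, at most $C\rho^{-n}\e^{-\rho^2/(2C)}\int_0^\infty F_\alpha(s)\dtt{s}=C\rho^{-n}\e^{-\rho^2/(2C)}$ uniformly in $t\in(0,1]$, and $\int_{\{d(x,y)>r/4\}}\rho^{-n}\e^{-\rho^2/(2C)}\wrt\mu(x)<\infty$ by the exponential volume growth \eqref{eq:exp_growth}. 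The $s\ge t^{-2}$ contribution is the delicate point: a pointwise-in-$x$ bound here would require exponential decay in $\rho$, which is not available from generic bounds on manifolds with exponential growth, so instead one integrates first. Substituting $u=st^2$ and using that $\sup_{t\in(0,1]}F_\alpha(u/t^2)\le Cu^{-\alpha}$ for $u\ge1$ (immediate from \eqref{f: Falpha est}, since $t^{2\alpha}\e^{-c(t^2/u)^{\alpha/(1-\alpha)}}\le1$ for $t\le1$), one majorises this contribution of $\sup_{t\in(0,1]}p^\alpha_{t^{2\alpha}}(x,y)$ by $C\int_1^\infty u^{-\alpha-1}h_u(x,y)\wrt u$, and then by Tonelli and the sub-Markovian estimate $\norm{h_u(\cdot,y)}{L^1(M)}\le1$,
\[
\int_M\int_1^\infty u^{-\alpha-1}h_u(x,y)\wrt u\wrt\mu(x)=\int_1^\infty u^{-\alpha-1}\norm{h_u(\cdot,y)}{L^1(M)}\wrt u\le\int_1^\infty u^{-\alpha-1}\wrt u<\infty.
\]
Hence $K^2$ satisfies \eqref{eq:ai_l1bound}, and therefore $K=K^1+K^2$ is an AI on $M$ of exponent $\gamma=\min\{1,2\alpha\}$, as claimed. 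The only genuine obstacle is this large-time regime $s\ge t^{-2}$ of the subordination integral, where the subordinated kernel need not decay fast enough in $d(x,y)$ to be handled pointwise; it is overcome by integrating in $x$ before taking the supremum over $t$ and invoking $L^1$-contractivity of the heat semigroup.
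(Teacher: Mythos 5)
Your proof is correct and takes essentially the same route as the paper: the subordination formula from Lemma~\ref{lem:subordination} combined with the bounds \eqref{f: Falpha est}, a H\"older/Lipschitz cutoff $\Phi$ near the diagonal, the Gaussian bounds \eqref{f: twosided heat}--\eqref{f: gradient heat} for heat times at most $1$ (with condition \ref{en:lai_lip} obtained via Remark~\ref{rem:LAI_diff}), and the exponential-growth and $\norm{h_u(\cdot,y)}{1}\leq 1$ estimates to verify \eqref{eq:ai_l1bound} for the remainder. The only difference is bookkeeping: you keep the heat times $u>1$ inside the localised piece $\Phi\, p^\alpha_{t^{2\alpha}}$, disposing of them with the crude bounds $h_u\leq C$ and $\mod{\nabla_y h_u}\leq C$ for $u\geq 1$, whereas the paper places that whole contribution ($P_t^\infty$) in the global part; both variants work, and your case analysis $d\leq t$ versus $d>t$ with $2\alpha-\gamma\geq 0$ is an equivalent substitute for the paper's $\om=\sqrt{d^2+t^2}$ computation.
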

\begin{proof}
From Lemma \ref{lem:subordination}, we deduce the subordination formula
\[
p^\alpha_{t^{2\alpha}}
= \int_0^\infty F_\alpha(s/t^2) \, h_{s} {\dtt s}.
\]
It is convenient to write $p^\alpha_{t^{2\alpha}}$ as the sum of $P_t^0$ and $P_t^\infty$, where 
\[
P_t^0
= \int_0^1 F_\alpha(s/t^2) \, h_s {\dtt s}
\qquad\text{and}\qquad
P_t^\infty
= \int_1^\infty F_\alpha(s/t^2) \, h_s {\dtt s}.
\]
We observe that, by \eqref{f: Falpha est},
\begin{equation} \label{f: sup ptinfty}
	\sup_{t\in (0,1]} \, P_t^\infty
	\leq C \int_1^\infty \sup_{t\in (0,1]} (s/t^2)^{-\alpha} \, h_s \dtt s
	\leq C \int_1^\infty \frac{h_s}{s^{1+\alpha}} \wrt s,
\end{equation}
and since $\|h_s(\cdot,y)\|_1 = 1$ for all $y \in M$ and $s >0$ we deduce that
\begin{equation} \label{f: norm sup ptO}
	\sup_{y\in M} \, \left\| \sup_{t\in (0,1]} \, P_t^\infty(\cdot,y) \right\|_1
		\leq C \int_1^\infty s^{-3/2} \wrt s
		< \infty.  
\end{equation}

Let $r \in (0,1]$ and the cutoff $\Phi : M \times M \to [0,1]$ be defined as in the proof of Proposition \ref{p: heat}. We further split $P_t^0 = P_t^{0,0} + P_t^{0,\infty}$, where $P_t^{0,0} = \Phi P_t^0$.

Note that, by \eqref{f: Falpha est}, $F_\alpha$ is a bounded function.
This and the upper estimate \eqref{f: twosided heat}
show that
\begin{equation} \label{f: sup ptO}
	\sup_{t\in (0,1]} \, P_t^{0,\infty}
	\leq C \chr_{[r/4,\infty)}(d) \int_0^1 \frac{\e^{-cd^2/s}}{s^{n/2}} {\dtt s}  
	\leq C \e^{-cd^2/2}\, \int_0^1 \frac{\e^{-cr^2/(16s)}}{s^{n/2}} {\dtt s}.
\end{equation}
The last integral is finite, hence from \eqref{eq:exp_growth} we readily see that
\begin{equation} \label{f: norm sup ptinftyO}
	\sup_{y\in M} \, \left\|\sup_{t\in (0,1]} \, P_t^{0,\infty}(\cdot,y)\right\|_1
		\leq C \sup_{y \in M} \int_M \e^{-cd(x,y)^2/2} \wrt\mu(x)
		< \infty.  
\end{equation}

The estimates \eqref{f: norm sup ptO} and \eqref{f: norm sup ptinftyO} show that $(t,x,y) \mapsto P_t^\infty(x,y) + P_t^{0,\infty}(x,y)$ satisfies condition \eqref{eq:ai_l1bound} in Definition \ref{def: AI}.

To conclude, it remains to prove that $P^{0,0}_t = P^0_t \Phi$
fulfils conditions \ref{en:lai_supp}-\ref{en:lai_lip} of Definition~\ref{def: LAI}.   
Clearly condition \ref{en:lai_supp} is satisfied with $\lambda=r/2$, due to the support of $\Phi$.
Observe that, if we set $\om = \sqrt{d^2+t^2}$, then \eqref{f: Falpha est} and the upper estimate of $h_t$ in \eqref{f: twosided heat} imply that 
\begin{equation}\label{eq:arg_omega}
\begin{split}
P_t^{0,0} 
&\leq P_t^0\, \chr_{[0,r/2]}(d) \\
&\leq C t^{2\alpha} \, \int_0^1 \, s^{-n/2-\alpha} \, \e^{-c(t^2/s)^{\alpha/(1-\alpha)}} \e^{-c d^2/s} {\dtt s} \\
&\leq C t^{2\alpha} \, \int_0^1 \, s^{-n/2-\alpha} \, \e^{-c(\om^2/s)^{\alpha}} {\dtt s} \\
&\leq C t^{2\alpha}/\om^{n+2\alpha} \leq C t^{-n} (1+d/t)^{-n-2\alpha}
\end{split}
\end{equation}
for all $t \in (0,1]$,
which 
yields the required estimate in Definition~\ref{def: LAI}~\ref{en:lai_upper} with $\gamma=\min\{1,2\alpha\}$.
Moreover, since $F_{\alpha}$ is nonnegative and does not vanish identically on $(0,1]$ (see Lemma \ref{lem:subordination}), the lower bound for $h_t$ in \eqref{f: twosided heat} implies that
\[\begin{split}
P_t^{0,0}(x,x) 
&\geq C \int_0^1 \, F_\alpha(s/t^2) \, s^{-n/2} {\dtt s} \\
&= C t^{-n} \int_{0}^{t^{-2}} F_\alpha(s) \, s^{-n/2} {\dtt s} \geq C\, t^{-n},
\end{split}\]
for all $t \in (0,1]$,
as required for condition \ref{en:lai_nondeg}. Finally, by the Leibniz rule,
\[
|\nabla_y P_t^{0,0}| \leq C P_t^0 \chr_{[0,r/2]}(d) + | \Phi \nabla_y P_t^0|.
\]
Since
\[
\nabla_y P_t^0
= \int_0^1 F_\alpha(s/t^2) \, \nabla_y h_s {\dtt s},
\]
from \eqref{f: gradient heat} we deduce, arguing as in \eqref{eq:arg_omega}, that
\[
|\Phi \nabla_y P_t^0|
\leq C t^{2\alpha} \, \int_0^1 \, s^{-(n+1)/2-\alpha} \, \e^{-c(\om^2/s)^\alpha} {\dtt s} 
\leq C t^{2\alpha}/\omega^{n+1+2\alpha}.
\]
Since $\omega \leq \sqrt{r^2/4+1}$ on the support of $\chr_{[0,r/2]}(d)$, by combining this estimate with the one for $P_t^0 \chr_{[0,r/2]}(d)$ in \eqref{eq:arg_omega}, we finally obtain that
\[
|\nabla_y P_t^{0,0}|
\leq C t^{2\alpha}/\omega^{n+1+2\alpha} 
\leq C t^{-n-1} (1+d/t)^{-n-1-2\alpha},
\]
and by Remark \ref{rem:LAI_diff} we deduce condition \ref{en:lai_lip} with $\gamma = \min\{1,2\alpha\}$.
\end{proof}

As a consequence of Propositions \ref{p: heat} and \ref{p: Poisson} and Theorem \ref{t: max char AI}, we deduce the following characterisation of $\ghu(M)$.

\begin{corollary}
For all $\alpha \in (0,1]$,
\[
\ghu(M) = \left\{ f \in L^1(M) \tc \sup_{0 < t \leq 1} |\e^{-t\opL^\alpha} f| \in L^1(M) \right\}
\]
and there exists a positive constant $C$ such that
\[
C^{-1} \|f\|_{\ghu} \leq \|f\|_1 + \left\|\sup_{0 < t \leq 1} |\e^{-t\opL^\alpha} f|\right\|_1 \leq C \|f\|_{\ghu}.
\]
In particular,
\[
\ghu(M) = \ghuH(M) = \ghuP(M)
\]
with equivalent norms.
\end{corollary}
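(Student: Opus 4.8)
The plan is to read off the statement from Theorem~\ref{t: max char AI}, applied to the approximations of the identity already produced in Propositions~\ref{p: heat} and~\ref{p: Poisson}. Fix $\alpha \in (0,1]$. If $\alpha \in (0,1)$, set $K(t,x,y) = p^\alpha_{t^{2\alpha}}(x,y)$: by Proposition~\ref{p: Poisson} this is an AI on $M$ of exponent $\min\{1,2\alpha\}$. If $\alpha = 1$, set $K(t,x,y) = h_{t^2}(x,y)$: by Proposition~\ref{p: heat} this is an AI on $M$ of exponent $1$. In either case $K \in \taiV$, so Theorem~\ref{t: max char AI} is applicable.

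The only point to check is the identification of the associated local maximal operator $\opK_*$. By construction $\opK_t f = \e^{-t^{2\alpha}\opL^\alpha} f$ for all $f \in L^1(M)$ and $t \in (0,1]$; since $t \mapsto t^{2\alpha}$ is an increasing bijection of $(0,1]$ onto itself, the substitution $s = t^{2\alpha}$ gives
\[
\opK_* f = \sup_{0 < t \leq 1} |\e^{-t^{2\alpha}\opL^\alpha} f| = \sup_{0 < s \leq 1} |\e^{-s\opL^\alpha} f|
\]
pointwise. Theorem~\ref{t: max char AI} then yields immediately
\[
\ghu(M) = \Bigl\{ f \in L^1(M) \tc \sup_{0 < s \leq 1} |\e^{-s\opL^\alpha} f| \in L^1(M) \Bigr\}
\]
together with the stated two-sided norm estimate
\[
C^{-1} \|f\|_{\ghu} \leq \|f\|_1 + \Bigl\| \sup_{0 < s \leq 1} |\e^{-s\opL^\alpha} f| \Bigr\|_1 \leq C \|f\|_{\ghu}.
\]

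For the last assertion, I would specialise the general characterisation to the two cases of interest: taking $\alpha = 1$ gives $\e^{-s\opL^\alpha} = \e^{-s\opL} = \sgrH_s$, hence $\ghu(M) = \ghuH(M)$ with equivalent norms, while taking $\alpha = 1/2$ gives $\e^{-s\opL^\alpha} = \e^{-s\sqrt{\opL}} = \sgrP_s$, hence $\ghu(M) = \ghuP(M)$. I do not expect any genuine obstacle in this corollary: the substantive work has already been carried out --- the Gaussian bounds \eqref{f: twosided heat}, the gradient bound \eqref{f: gradient heat}, the subordination estimates of Lemma~\ref{lem:subordination}, and the splitting of the heat- and Poisson-type kernels into an LAI part plus an $L^1$-controlled tail in Propositions~\ref{p: heat} and~\ref{p: Poisson} --- so what remains is merely the harmless rescaling of the time variable and a single invocation of Theorem~\ref{t: max char AI}.
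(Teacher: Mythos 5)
Your proposal is correct and is exactly the argument the paper intends: the corollary is stated there as a direct consequence of Propositions \ref{p: heat} and \ref{p: Poisson} together with Theorem \ref{t: max char AI}, and your reparametrisation $s=t^{2\alpha}$ (a bijection of $(0,1]$ onto itself) is the only detail needed to identify $\opK_*f$ with $\sup_{0<s\leq 1}|\e^{-s\opL^\alpha}f|$. The specialisations $\alpha=1$ and $\alpha=1/2$ giving $\ghuH(M)$ and $\ghuP(M)$ are likewise as in the paper.
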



\begin{thebibliography}{MMMM}

\bibitem[Ch]{Ch} I. Chavel,
\emph{Riemannian Geometry. A Modern Introduction}, second ed.,
Cambridge Studies in Advanced Mathematics, vol.\ 98,
Cambridge University Press, Cambridge, 2006.

\bibitem[CW]{CW} R.R.~Coifman and G.~Weiss,
Extension of Hardy spaces and their use in analysis,
\emph{Bull. Amer. Math. Soc.} \textbf{83} (1977), 569--645.

\bibitem[Da]{Da2} E.B. Davies, Pointwise bounds on the space and time derivatives of heat kernels,
\emph{J. Operator Theory} \textbf{21} (1989), 367--378.

\bibitem[FS]{FS} C. Fefferman and E.M.~Stein, Hardy spaces of several variables, 
\emph{Acta Math.} \textbf{129} (1972), 137--193.

\bibitem[Go]{G} D. Goldberg, A local version of real Hardy spaces, \emph{Duke Math. J.} \textbf{46} (1979), 27--42.

\bibitem[GLY]{GLY}
L. Grafakos, L. Liu and D. Yang,
Radial maximal function characterizations for Hardy spaces on RD-spaces,
\emph{Bull. Soc. Math. France} \textbf{137} (2009), 225--251.

\bibitem[HMY]{HMY}
Y. Han, D. M\"uller and D. Yang,
A theory of Besov and Triebel--Lizorkin spaces on metric measure spaces modeled on Carnot--Carath\'eodory spaces,
\emph{Abstr. Appl. Anal.} (2008), Article ID 893409.

\bibitem[He]{He} E.~Hebey,
\emph{Nonlinear Analysis on Manifolds: Sobolev Spaces and Inequalities},
Courant Lecture Notes in Mathematics, vol. 5, American Mathematical Society, Providence, RI, 1999.

\bibitem[Lee]{Lee} J.M. Lee, \emph{Riemannian Manifolds}, Graduate Texts in Mathematics, vol.\ 176, Springer-Verlag, New York, 1997.

\bibitem[LSW]{LSW} C.-C. Lin, K. Stempak and Y.-S. Wang,
Local maximal operators on measure metric spaces, \emph{Publ. Mat.} \textbf{57} (2013), 239--264.

\bibitem[MMV]{MMV} A. Martini, S. Meda and M. Vallarino, 
A family of Hardy-type spaces on nondoubling manifolds,
\emph{Ann. Mat. Pura Appl.} \textbf{199} (2020), 2061--2085.

\bibitem[MMVV]{MMVV} A. Martini, S. Meda, M. Vallarino and G. Veronelli,
Hardy-type spaces defined via maximal functions on nondoubling manifolds,
in preparation.

\bibitem[MVe]{MVe} S. Meda and G. Veronelli,  Local Riesz transforms and local Hardy spaces 
on Riemannian manifolds with bounded geometry, preprint (2020), \texttt{arXiv:2008.11460}.

\bibitem[MVo]{MVo} S. Meda and S. Volpi, Spaces of Goldberg type on 
certain measured metric spaces,  \emph{Ann. Mat. Pura Appl.} \textbf{196} (2017), 947--981.

\bibitem[SC]{SC} L. Saloff-Coste, \emph{Aspects of 
Sobolev-type Inequalities}, 
London Mathematical Society Lecture
Note Series, vol.\ 289, Cambridge University Press, Cambridge, 2002.

\bibitem[T]{T} M.E. Taylor, Hardy spaces and bmo on manifolds with bounded geometry, 
\emph{J. Geom. Anal.} \textbf{19} (2009), 137--190.

\bibitem[U]{U} A. Uchiyama, A maximal function characterization of $H^p$ on the space of homogeneous type,
\emph{Trans. Amer. Math. Soc.} \textbf{262} (1980), 579--592.

\bibitem[Vo]{Vo} S. Volpi, \emph{Bochner-Riesz Means of Eigenfunction Expansions and 
Local Hardy Spaces on Manifolds with Bounded Geoemetry}, Ph.D. Thesis, 
Universit\`a di Milano--Bicocca, 2012, \texttt{https://boa.unimib.it/handle/10281/29105}.

\bibitem[YZ1]{YZ} D. Yang and Y. Zhou,
 Radial maximal function characterizations of Hardy spaces on RD-spaces and their applications, 
\emph{Math. Ann.} \textbf{346} (2010), 307--333.

\bibitem[YZ2]{YZ2} D. Yang and Y. Zhou,
Localized Hardy spaces $H^1$ related to admissible functions on RD-spaces and applications to Schrödinger operators,
\emph{Trans. Amer. Math. Soc.} \textbf{363} (2011), 1197--1239.

\bibitem[Yo]{Y} K. Yosida,
\emph{Functional Analysis},
sixth ed., Springer-Verlag, Berlin, Heidelberg and New York, 1980.  

\end{thebibliography}
\end{document}